\documentclass[final,leqno]{siamltex}

\usepackage{amsmath}
\usepackage{amssymb}
\usepackage{graphicx}
\usepackage[notcite,notref]{showkeys}
\usepackage{tikz}
\usepackage{float}
\usepackage{bm}
\newcounter{dummy} \numberwithin{dummy}{section}
\newtheorem{defi}[dummy]{\bf Definition}

\newtheorem{algorithm}{\bf Weak Galerkin Algorithm}
\newtheorem{remark}{\bf Remark}

\usepackage[outline]{contour}

\newcommand\myLambda{\mathrm{I} \hspace{-0.4ex} \Lambda}

\setlength{\parskip}{1\parskip}

\newcommand{\bQ}{{\bf Q}}
\newcommand{\bu}{{\bf u}}
\newcommand{\bq}{{\bf q}}
\newcommand{\bw}{{\bf w}}
\newcommand{\bx}{{\bf x}}

\newcommand{\be}{{\bf e}}
\newcommand{\bv}{{\bf v}}
\newcommand{\bH}{{\textbf{\textit{H}}}}
\newcommand{\bpsi}{{\boldsymbol\psi}}

\def\Q{{\mathbb Q}}
\def\T{{\mathcal T}}

\def\pT{{\partial T}}
\def\l{{\langle}}
\def\r{{\rangle}}

\def\T{{\mathcal T}}

\def\bbf{{\bf f}}

\def\bn{{\bf n}}
\def\bq{{\bf q}}

\def\ljump{{[\![}}
\def\rjump{{]\!]}}

\def\3bar{{|\hspace{-.02in}|\hspace{-.02in}|}}

  \def\b#1{\mathbf{#1}} 
\def\a#1{\begin{align*}#1\end{align*}}

\title{A stabilizer free, pressure robust, and superconvergence weak Galerkin finite element method for the Stokes Equations on polytopal mesh }
\author{Lin Mu\thanks{Department of Mathematics, University of Georgia, Athens, GA 30602 (linmu@uga.edu).}
\and Xiu Ye\thanks{Department of
Mathematics, University of Arkansas at Little Rock, Little Rock, AR
72204 (xxye@ualr.edu). This research was supported in part by
National Science Foundation Grant DMS-1620016.}
\and
Shangyou Zhang\thanks{Department of
Mathematical Sciences, University of Delaware, Newark, DE 19716 (szhang@udel.edu).}
}
\begin{document}

\maketitle

\begin{abstract}
In this paper, we propose a new stabilizer free and pressure robust WG method for the Stokes equations with super-convergence on polytopal mesh in the primary velocity-pressure formulation. 
Convergence rates with one order higher than the optimal-order for velocity in both  energy-norm and the $L^2$-norm and for pressure in $L^2$-norm are proved in our proposed scheme. The $H$(div)-preserving operator has been constructed based on the polygonal mesh for arbitrary polynomial degrees and employed in the body source assembling to break the locking phenomenon induced by poor mass conservation in the classical discretization. Moreover, the velocity error in our proposed scheme is proved to be independent of pressure and thus confirm the pressure-robustness. For Stokes simulation, our proposed scheme only modifies the body source assembling but remains the same stiffness matrix.
Four numerical experiments are conducted to validate the convergence results and robustness.
\end{abstract}

\begin{keywords}
Weak Galerkin, finite element methods, the Stokes equations, super-convergence, pressure-robustness, stabilizer free.
\end{keywords}

\begin{AMS}
Primary, 65N15, 65N30, 76D07; Secondary, 35B45, 35J50
\end{AMS}
\pagestyle{myheadings}

\section{Introduction}

In this paper, we consider the following viscosity dependent Stokes equations: 
find velocity $\bu:\Omega\to\mathbb{R}^d$ (d = 2 or 3) and pressure: $p:\Omega\to\mathbb{R}$ such that:
\begin{eqnarray}
-\nu\Delta\bu + \nabla p &=& {\bf f},\text{ in }\Omega,\label{eq:pde-1}\\
\nabla\cdot\bu &=& 0,\text{ in }\Omega,\label{eq:pde-2}\\
\bu&=& 0,\text{ on }\partial\Omega,\label{eq:bc}
\end{eqnarray}
where $\nu>0$ is a constant viscosity parameter and ${\bf f}\in{\bf L}^d(\Omega)$ is a given vector field.

The Stokes equation has been widely used in realistic applications. However the classical finite element methods for incompressible Stokes problem are usually not pressure-robust: their velocity error is pressure dependent, shown as follows,
\begin{eqnarray*}
\|\nabla(\bu-\bu_h)\|\le C\inf_{\bw\in V_h}\|\nabla(\bu-\bw)\|+\frac{1}{\nu}\inf_{q\in Q_h}\|p-q\|.
\end{eqnarray*}
Here $V_h$ and $Q_h$ denote the inf-sup stable finite element spaces for velocity and the pressure. We observe that there is a pressure dependent term that can be relatively large for small viscosity $\nu$. This may cause the locking phenomenon: as small parameters in $\nu\ll 1$ will produce large error in velocity; bad approximation in pressure may affect the velocity simulation. To reduce these effects, one would have to increase scheme order and also assume that the pressure is smooth enough.

To remove the pressure dependency in velocity simulation, one has to employ the divergence-free finite element scheme.
Nowadays, many divergence-free elements have been developed for two dimensional problems \cite{FalkNeilan,ScottVogelius} and three dimensional problems \cite{Zhang2011}. 
The enrichment of the H(div; $\Omega$)-conforming elements locally with divergence-free rational
shape-functions has been proposed in \cite{GuzmanNeilan2011,GuzmanNeilan2013} for two and three dimensional problems. Then by modifying the variational formulation and introduced tangential penalty and thus divergence free schemes are obtained by Cockburn \cite{CockburnKanschatSchotzau} and Wang \cite{WangYe2016}. Chen proposed to introduce a discrete dual curl operator \cite{ChenWangZhong} to achieve divergence free MAC scheme on triangular grids.
There are other approaches for dealing with such difficulties  by adding grad-div stabilization \cite{
OlshanskiiOlshanskii,Olshanskii2009,JenkinsVolkerLinkeRebholtz}. 
In addition, an alternative method to get pressure independent error estimate is introduced recently in \cite{Linke2012,Linke2014} by employing a divergence preserving velocity reconstruction operator. 
In \cite{Linke2012}, a velocity reconstruction is presented to map discrete divergence free test functions onto exactly divergence free test functions which is applied only on the right hand side for the Stokes equations. This approach was performed for several elements including discontinuous pressure elements \cite{JohnLinkeMerdonNeilan,Linke2014,Linke2012,LinkeMatthiesTobiska,BrenneckeLinkeMerdon} and continuous pressure elements \cite{Lederer,LedererSchoberl}. 
With the help of velocity reconstruction, Linke proposed the pressure-robust schemes leading to a pressure independent velocity error estimate
\begin{eqnarray*}
\|\nabla(\bu-\bu_h)\|\le C\inf_{\bw\in V_h}\|\nabla(\bu-\bw)\|.
\end{eqnarray*}
Such schemes are developed on modifying for the existing inf-sup stable schemes to achieve the robustness without compromising the computational accuracy.

{Recently, the approximations with flexibility on the polygonal/polyhedral meshes are proposed by many researchers. The features, from handling polygonal meshes and meshes with hanging nodes, include not only weakens the complexity in meshing for the irregular domain but also enhances the efficiency in mesh refining/coarsening in adaptive finite element strategies. Due to features of polygonal finite element framework, there are increasing demands for extending existing numerical schemes to polygonal meshes. 
} Virtual element methods (VEM)\cite{BeiVei2013,BeiVei2013-2}, mimetic finite difference methods \cite{LipnikovManziniShashkov}, hybrid high order methods (HHO)\cite{PietroErn2015}, hybridized discontinuous Galerkin methods (HDG)\cite{CockburnDipietroErn}, generalized barycentric coordinates method \cite{ChenWang}, and weak Galerkin finite element methods (WG)\cite{wy} have been developed on these types of meshes for various types of partial differential equations. 

In this paper, we focus on developing a new pressure-robust numerical discretization for viscosity dependent Stokes equations on the polygonal meshes. To the best of our knowledge, there are few previous work on the pressure-robust incompressible solver on the polygonal meshes. On triangles/rectangles/tetrahedrons/cubics, such a reconstruction can be defined as to projection into a standard Raviart Thomas or Brezzi Douglas Marini spaces\cite{BBF2013}. However, on the polygons, one has to employ the rational bases in the $\mathbb{CW}_0$\cite{ChenWang} space for constructing the reconstruction operator. Such projection techinique is limited to the lowest order scheme with $k=0$ and convex partitions in the meshes\cite{mu}. Another recent work regarding pressure-robust scheme developed by Frerichs\cite{FM2020} was to explore the sub-triangulation and solve the small local Dirichlet boundary value problems for each polygon in the virtual element framework. 

In this paper, we propose another $H$(div)-preserving space and propose new projection technique for the velocity reconstruction operator on the polygonal cell and then design the novel pressure-robust based on the weak Galerkin finite element method. The operator is constructed by piecewise RT functions and enforcing extra conditions as (\ref{eq:cond-1})-(\ref{eq:cond-5}).
The contribution in this work is to show that uniform pressure-robustness, meaning on any mesh, can be attained by the proposed $H$(div,$\Omega$)-conforming projection $\Pi_h$ that preserves the divergence of the test functions $\bv\in V_h$. Comparing to the existing WG scheme for Stokes equation, our scheme only modifies the body force assembling but remains the same stiffness matrix, and thus achieves the divergence preserving by minimal effort.   {The techniques designed in this paper can also be integrated into other polygonal finite element methods including HHO, HDG and VEM.}


This paper is organized as follows. 
Brief review regarding basis functions on the polygonal meshes will be presented in Section~\ref{Sect:Preliminary}.
The discretization is developed in Section~\ref{Sect:Scheme}. Our main results regarding error estimates are stated in Section~\ref{Sect:MainResults}.  Section~\ref{Section:Implementation} contributes to demonstrate the numerical implementation for velocity reconstruction operator. Section~\ref{Section:numerical-experiments} presents several numerical examples for Stokes equations. Finally, this paper is summarized with concluding remarks in Section~\ref{Sect:Conclusion}.

\section{Preliminary}\label{Sect:Preliminary}
This section recalls the needed notations, defines the finite element spaces, and introduces the setup of the weak Galerkin finite element methods for the Stokes problem and the velocity reconstruction operator.
\subsection{Finite Element Space}
Let ${\cal T}_h$ be a partition of the domain $\Omega$ consisting of
polygons in two dimension or polyhedra in three dimension satisfying
a set of conditions specified in \cite{wymix}. Denote by ${\cal E}_h$
the set of all edges or flat faces in ${\cal T}_h$, and let ${\cal
E}_h^0={\cal E}_h\backslash\partial\Omega$ be the set of all
interior edges or flat faces. For every element $T\in \T_h$, we
denote by $h_T$ its diameter and mesh size $h=\max_{T\in\T_h} h_T$
for ${\cal T}_h$. Let $P_k(T)$ consist all the polynomials on $T$ with degree no greater than $k$.

For $k\ge 0$ and given $\T_h$, for $T\in\T_h$ define two finite element spaces  for velocity
\begin{eqnarray}
V_h &=&\left\{ \bv=\{\bv_0,\bv_b\}:\ \bv_0|_{T}\in [P_{k}(T)]^d,\;\bv_b|_e\in [P_{k}(e)]^d,e\subset\pT \right\}.\label{vh}
\end{eqnarray}
and for pressure
\begin{equation}
W_h =\left\{w\in L_0^2(\Omega): \ w|_T\in P_{k}(T)\right\}.\label{wh}
\end{equation}
It is noted that the $\bv_b$ has a single value on the faces of triangulation.
Let $V_h^0$ be a subspace of $V_h$ consisting of functions with vanishing boundary value.

The space $H(\operatorname{div};\Omega)$ is defined as the set of vector-valued functions on $\Omega$ which,
together with their divergence, are square integrable; i.e.,
\[
H(\operatorname{div}; \Omega)=\left\{ \bv\in [L^2(\Omega)]^d:\; \nabla\cdot\bv \in L^2(\Omega)\right\}.
\]
In order to introduce the local finite element space to mimic the $H(\operatorname{div}; \Omega)$. For any $T\in\T_h$, it can be divided in to a set of disjoint triangles $T_i$ with $T=\cup T_i$. We define a local space $\Lambda_{k}(T)$(\cite{sf-wg-part-II})
\begin{eqnarray}
\Lambda_{k}(T)=\{\bv\in H(\operatorname{div},T):\ \bv|_{T_i}\in RT_{k}(T_i),\;\;\nabla\cdot\bv\in P_{k}(T)\},\label{vk}
\end{eqnarray}
and a global space $\tilde{V}_h$ as
\begin{eqnarray}
\tilde{V}_h=\{\bv\in H(\operatorname{div},\Omega):\ \bv|_T\in \Lambda_{k}(T),\;T\in\T_h\}.\label{tv}
\end{eqnarray}

  Then we define a space $\myLambda_h(T)$ for the approximation of weak gradient on each element $T$  as
\begin{eqnarray}
\myLambda_{k}(T)=\{\bpsi\in [H(\operatorname{div};T)]^d:&&\ \bpsi|_{T_i}\in [RT_{k}(T_i)]^{d},\;\;\nabla\cdot\bpsi\in [P_k(T)]^d,\label{lambda}\\
&&\bpsi\cdot\bn|_e\in [P_{k}(e)]^d,\;e\subset\pT\},\nonumber
\end{eqnarray}
where $RT_k(T_i)$ is the usual Raviart-Thomas element \cite{bf} of order $k$.

\begin{definition}
For a function $\bv\in V_h$, its weak gradient $\nabla_w\bv$ is a piecewise polynomial such that $\nabla_w\bv|_T \in \myLambda_k(T)$  and  satisfies the following equation,
\begin{equation}\label{wg}
(\nabla_w\bv,\  \bm{\tau})_T = -(\bv_0,\  \nabla\cdot \bm{\tau})_T+
\l\bv_b, \ \tau\cdot\bn \r_\pT\quad\forall\bm{\tau}\in \myLambda_k(T).
\end{equation}
\end{definition}
\begin{defi}
For a function $\bv\in V_h$, its weak divergence $\nabla_w\cdot\bv$ is a piecewise polynomial such that $\nabla_w\cdot\bv|_T \in P_{k}(T)$  and  satisfies the following equation,
\begin{equation}\label{wd}
(\nabla_w\cdot\bv,\  \tau)_T = -(\bv_0,\  \nabla\tau)_T+
\l\bv_b\cdot\bn, \ \tau \r_\pT\quad\forall\tau\in P_{k}(T).
\end{equation}
\end{defi}

\begin{lemma}
For $\bm{\tau}\in [H(\operatorname{div};\Omega)]^d$, there exists a projection $\pi_h$ with $\pi_h\bm{\tau}\in [H(\operatorname{div};\Omega)]^d$ satisfying $\pi_h\bm{\tau}|_T\in \myLambda_k(T)$ and 
\begin{eqnarray}
(\pi_h\bm{\tau},\;\sigma)_T&=&(\bm{\tau},\;\sigma)_T  \quad \forall \sigma\in [P_{k-1}(T)]^{d\times d},\label{P1}\\
\l \pi_h\bm{\tau}\cdot\bn,\; \bq\r_e &=&\l \bv\cdot\bn,\; \bq\r_e\quad\forall \bq\in [P_{k}(e)]^d, e\subset\pT,\label{P2} \\
\|\pi_h\bm{\tau}-\bm{\tau}\|&\le& Ch^{k+1}|\bm{\tau}|_{k+1}.\label{zhang22}
\end{eqnarray}
\end{lemma}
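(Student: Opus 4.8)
The plan is to treat this as a macro-element interpolation result in the spirit of Raviart--Thomas/BDM theory, adapted to the polytopal cell $T$ with its auxiliary sub-triangulation $\{T_i\}$. I would proceed in four steps: (i) reduce the matrix-valued statement to a scalar (vector-field) one; (ii) establish that the functionals on the right of (\ref{P1})--(\ref{P2}) are a unisolvent set of degrees of freedom on the local space; (iii) deduce local existence and global $H(\operatorname{div};\Omega)$-conformity of $\pi_h$; and (iv) obtain (\ref{zhang22}) from polynomial reproduction together with a Bramble--Hilbert plus scaling argument. For step (i), note that the space $\myLambda_k(T)$ and the conditions (\ref{P1})--(\ref{P2}) decouple over the $d$ rows of $\bm{\tau}$, so it suffices to construct, for a single field $\bm{\tau}\in H(\operatorname{div};\Omega)$, a field $\pi_h\bm{\tau}$ with $\pi_h\bm{\tau}|_T$ in the scalar local space
\[
\Lambda_k^\ast(T)=\{\bv\in H(\operatorname{div};T):\ \bv|_{T_i}\in RT_k(T_i),\ \nabla\cdot\bv\in P_k(T),\ \bv\cdot\bn|_e\in P_k(e),\ e\subset\pT\},
\]
whose $L^2(T)$-projection onto $[P_{k-1}(T)]^d$ matches that of $\bm{\tau}$ and whose normal trace on each $e\subset\pT$ has the prescribed $P_k(e)$-moments $\l\bm{\tau}\cdot\bn,\bq\r_e$.

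The crux is step (ii): on each $T$ the functionals $\bv\mapsto(\bv,\phi)_T$ for $\phi\in[P_{k-1}(T)]^d$ and $\bv\mapsto\l\bv\cdot\bn,q\r_e$ for $q\in P_k(e)$, $e\subset\pT$, are unisolvent on $\Lambda_k^\ast(T)$. Once one checks by a dimension count over the sub-triangulation --- using the local $RT_k$ dimension, the $H(\operatorname{div})$ matching across interior sub-edges, and the constraints $\nabla\cdot\bv\in P_k(T)$, $\bv\cdot\bn|_e\in P_k(e)$ --- that $\dim\Lambda_k^\ast(T)$ equals the number of functionals, it suffices to show that $\bv\in\Lambda_k^\ast(T)$ annihilated by all of them vanishes. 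Here one first kills the divergence: since $\nabla w\in[P_{k-1}(T)]^d$ for $w\in P_k(T)$, the interior conditions give $(\bv,\nabla w)_T=0$, and with $\bv\cdot\bn|_{\pT}=0$ integration by parts yields $(\nabla\cdot\bv,w)_T=0$ for all $w\in P_k(T)$; as $\nabla\cdot\bv\in P_k(T)$, this forces $\nabla\cdot\bv=0$ on $T$. One is then left with a divergence-free, piecewise-$RT_k$ field on the simply connected cell $T$ with zero normal trace on $\pT$ and vanishing $[P_{k-1}(T)]^d$-moments; writing it as $\operatorname{curl}$ of a continuous piecewise-$P_{k+1}$ stream function that is constant on $\pT$, an exact-sequence argument on the sub-triangulation shows it must be zero. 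This last de Rham-type step on a polytopal macro-element is the main obstacle; much of it can be imported from \cite{sf-wg-part-II}, where $\Lambda_k(T)$ and its unisolvence are analyzed, the only new ingredient being the single-polynomial normal-trace constraint on the macro-edges.

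Step (iii) is then immediate: unisolvence gives a well-defined local $\pi_h$ satisfying (\ref{P1})--(\ref{P2}), and $\pi_h\bm{\tau}\in H(\operatorname{div};\Omega)$ because on any interior edge $e$ shared by $T$ and $T'$ the trace $(\pi_h\bm{\tau}\cdot\bn)|_e$ computed from either side is the unique element of $[P_k(e)]^d$ with moments $\l\bm{\tau}\cdot\bn,\bq\r_e$ against all $\bq\in[P_k(e)]^d$ (recall $\bm{\tau}\cdot\bn$ is single-valued on $e$), so the two sides coincide.

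For step (iv), observe $[P_k(T)]^{d\times d}\subset\myLambda_k(T)$, since a polynomial is piecewise $RT_k$, has divergence in $P_{k-1}\subset P_k$, and polynomial normal trace on each edge; hence by unisolvence $\pi_h$ reproduces $[P_k(T)]^{d\times d}$. A standard scaling of $T$ (and its induced sub-triangulation) to a reference configuration --- using the shape-regularity assumptions on $\T_h$ from \cite{wymix} to bound $\pi_h$ uniformly --- together with the Bramble--Hilbert lemma then gives $\|\pi_h\bm{\tau}-\bm{\tau}\|_T\le Ch_T^{k+1}|\bm{\tau}|_{k+1,T}$, and summing over $T\in\T_h$ yields (\ref{zhang22}). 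The verification of the dimension count and the approximation estimate are routine; the genuine difficulty lies in the macro-element unisolvence of step (ii).
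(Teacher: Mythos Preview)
The paper does not prove this lemma at all; immediately after the statement it writes ``The proof of the above lemma can be found in \cite{sf-sc-wg-stokes}.'' Your proposal is therefore not a different route but a self-contained sketch of the macro-element interpolation argument that presumably underlies the cited result. The outline is sound: the row-wise reduction to a scalar $H(\operatorname{div})$ problem, the unisolvence strategy (kill the divergence via integration by parts using the interior $[P_{k-1}]^d$-moments and vanishing boundary normal trace, then eliminate the remaining divergence-free piece by an exact-sequence/stream-function argument on the sub-triangulation), the global $H(\operatorname{div};\Omega)$-conformity from single-valued normal moments on interior faces, and the Bramble--Hilbert estimate from $[P_k]^{d\times d}$-reproduction plus scaling are exactly the expected ingredients. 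You correctly flag the dimension count and macro-element unisolvence on the polytopal cell as the genuine work and point to \cite{sf-wg-part-II} for the underlying construction, which is precisely where the paper (via \cite{sf-sc-wg-stokes}) ultimately sends the reader. In short, you are supplying what the paper outsources to a reference; there is nothing to compare against in the paper itself.
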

The proof of the above lemma can be found in \cite{sf-sc-wg-stokes}.

\subsection{Velocity Reconstruction Operator} 
This section shall describe the velocity reconstruction operator that is H(div,$\Omega$)-conforming and preserves the divergence of the weak functions for all polygons $T\in\mathcal{T}_h$. The main idea is to employ a sub-triangulation of each polygon and use the piecewise RT$_k(T_j)$ functions with some extra constraints. Here we shall cite the construction of the finite element space $\Lambda_k(T)$, which has been studies in \cite{sf-wg-part-II}.



Assume no additional inner vertex/edges is introduced in subdividing a polygon $T$ into $n$ triangles $\{T_i\}_{i = 1}^n$. Thus, we have $n-1$ internal edges which separate $T$ into $n$ parts. 
In the 2D setting, we are ready to define the velocity reconstruction operator as below:
For $\bv=\{\bv_0,\bv_b\}\in V_h$, the projection $\Pi_h:V_h\to \tilde{V}_h$ is defined such that
\begin{subequations}
\begin{eqnarray}
&&\int_{e_{i}\subset\partial T}(\Pi_h\bv\cdot\bn_{i})p_k ds = \int_{e_i\subset\partial T}(\bv_b\cdot\bn_i) p_k ds,\ \forall p_k\in P_k(e_{i})\label{eq:cond-1}\\
&&\int_T \Pi_h\bv\cdot\bn_1 p_{k-1}d\bx = \int_T\bv_0\cdot\bn_1 p_{k-1}d\bx,\ \forall p_{k-1}\in P_{k-1}(T),\label{eq:cond-2} \\
&&\int_{T_i}\Pi_h\bv\cdot\bn_2 p_{k-1}d\bx =\int_{T_i} \bv_0\cdot\bn_2 p_{k-1},\ \forall p_{k-1}\in P_{k-1}(T_i),\ i = 1,\cdots,n, \label{eq:cond-3}\\
&&\int_{e_{j}\subset T^0}\ljump\Pi_h\bv\rjump\cdot\bn_{j}p_k ds = 0,\ \forall p_k\in P_k(e_{j}),\label{eq:cond-4}\\
&&\int_{T_1}\nabla\cdot(\Pi_h\bv|_{T_i}-\Pi_h\bv|_{T_1})p_k d\bx = 0,\ \forall p_k\in P_k(T_1),\ i = 2,\dots,n,\label{eq:cond-5}
\end{eqnarray}
\end{subequations}
where $e_{ij}$ is the $j$-th edge of $T_j$ with a fixed normal vector $\bn_{ij}$, $\bn_1$ is a unite vector not parallel to any internal face normal $\bn_{ij}$, ($\bn_1,\bn_2$) forms a right-hand orthonormal system, $\ljump\cdot\rjump$ denotes the jump on a edge, $\Pi_h\bv|_{T_i}$ is understood as a polynomial vector which can be used on another triangle $T_1.$ When $k = 0$, the conditions (\ref{eq:cond-2})-(\ref{eq:cond-3}) are not needed. 

%

%
%

\begin{lemma}
For $\bv=\{\bv_0,\bv_b\}\in V_h$, there exists a projection $\Pi_h:V_h\to \tilde{V}_h$ such that
\begin{eqnarray}
(\Pi_h\bv,\bq)_T&=&(\bv_0,\bq)_T\quad\forall \bq\in [P_{k-1}(T)]^d, \label{zhang0}\\
\l\Pi_h\bv\cdot\bn,\;w\r_e&=&\l\bv_b\cdot\bn,\;w\r_e 
   \quad\forall w\in P_{k}(e), \ e\subset\pT, \label{zhang1}\\
\|\Pi_h\bv-\bv_0\|&\le& C(\sum_{T\in\T_h}h_T\|\bv_0-\bv_b\|_{\pT}^2)^{1/2}.\label{zhang2}
\end{eqnarray}
\end{lemma}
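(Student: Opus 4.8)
The plan is to obtain (\ref{zhang0})--(\ref{zhang2}) by reading them off the defining relations (\ref{eq:cond-1})--(\ref{eq:cond-5}), once we know that those relations actually determine $\Pi_h\bv$. On a fixed $T$ the conditions decouple and constitute a linear system for $\Pi_h\bv|_T\in\Lambda_k(T)$, so the first step is to check that it is unisolvent: one matches the boundary moments (\ref{eq:cond-1}) with the edge degrees of freedom on $\pT$, notes that (\ref{eq:cond-4}) forces the piecewise $RT_k$ field to be $H(\operatorname{div};T)$-conforming (a degree-$k$ jump orthogonal to $P_k(e_j)$ vanishes), that (\ref{eq:cond-5}) forces $\nabla\cdot\Pi_h\bv\in P_k(T)$, and that (\ref{eq:cond-2})--(\ref{eq:cond-3}) account for the remaining interior moments; a dimension count then shows the system is square. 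For the trivial-kernel part ($\bv_0=\bv_b=\bm{0}$), (\ref{eq:cond-1}) kills the normal trace on $\pT$, after which testing $\nabla\cdot\Pi_h\bv\in P_k(T)$ against $P_k(T)$ and using (\ref{eq:cond-2})--(\ref{eq:cond-3}) gives $\nabla\cdot\Pi_h\bv=0$, and finally the transversality of $\bn_1$ to every internal edge normal, together with the remaining instances of (\ref{eq:cond-2}), forces $\Pi_h\bv\equiv\bm{0}$; this is the argument of \cite{sf-wg-part-II}. Existence of the linear map $\Pi_h:V_h\to\tilde V_h$ follows.

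Then (\ref{zhang1}) is nothing but (\ref{eq:cond-1}) with the test polynomial $p_k=w\in P_k(e)$. For (\ref{zhang0}) I would decompose an arbitrary $\bq\in[P_{k-1}(T)]^d$ in the orthonormal frame $(\bn_1,\bn_2)$ as $\bq=q_1\bn_1+q_2\bn_2$ with $q_1,q_2\in P_{k-1}(T)$, so that $(\Pi_h\bv,\bq)_T=\int_T(\Pi_h\bv\cdot\bn_1)q_1\,d\bx+\int_T(\Pi_h\bv\cdot\bn_2)q_2\,d\bx$. The first integral equals $\int_T(\bv_0\cdot\bn_1)q_1\,d\bx$ by (\ref{eq:cond-2}); the second equals $\sum_i\int_{T_i}(\bv_0\cdot\bn_2)q_2\,d\bx=\int_T(\bv_0\cdot\bn_2)q_2\,d\bx$ upon applying (\ref{eq:cond-3}) on each $T_i$, where $q_2|_{T_i}\in P_{k-1}(T_i)$. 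Adding the two gives $(\Pi_h\bv,\bq)_T=(\bv_0,\bq)_T$; for $k=0$ the identity is vacuous.

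For (\ref{zhang2}) the point that unlocks the estimate is that $\bv_0\in[P_k(T)]^d\subset\Lambda_k(T)$, because $[P_k(T_i)]^d\subset RT_k(T_i)$, a global polynomial is $H(\operatorname{div})$-conforming on $T$, and its divergence lies in $P_{k-1}(T)\subset P_k(T)$. Hence $\bm{\chi}:=\Pi_h\bv-\bv_0\in\Lambda_k(T)$, and I would read off its degrees of freedom: on $\pT$ one has $\bm{\chi}\cdot\bn=(\bv_b-\bv_0)\cdot\bn$ by (\ref{eq:cond-1}); the moments (\ref{eq:cond-2})--(\ref{eq:cond-3}) vanish for $\bm{\chi}$ by (\ref{zhang0}); the jump conditions (\ref{eq:cond-4}) hold for $\bm{\chi}$ since $\Pi_h\bv$ satisfies them and $\bv_0$ is globally smooth; and the divergence-compatibility conditions (\ref{eq:cond-5}) hold because $\bv_0$ restricts to one and the same polynomial on every $T_i$. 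Thus $\bm{\chi}$ is the unique element of $\Lambda_k(T)$ whose only active degrees of freedom are the normal moments of $(\bv_b-\bv_0)\cdot\bn$ on $\pT$, and a Piola-transform scaling argument on the sub-triangles yields $\|\bm{\chi}\|_T\le C h_T^{1/2}\|(\bv_b-\bv_0)\cdot\bn\|_{\pT}\le C h_T^{1/2}\|\bv_0-\bv_b\|_{\pT}$. Squaring and summing over $T\in\T_h$ produces (\ref{zhang2}).

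The hard part is making the constant in the last step uniform: it rests on a norm equivalence $\|\bpsi\|_T\simeq h_T^{1/2}\|\bpsi\cdot\bn\|_{\pT}$ for $\bpsi\in\Lambda_k(T)$ with all interior and divergence-compatibility degrees of freedom vanishing, valid uniformly over elements and their sub-triangulations, which in turn relies on the shape-regularity assumptions on $\T_h$ from \cite{wymix}; and, one level up, on the geometric transversality of $\bn_1$ that makes the local system unisolvent in the first place. In three dimensions the same outline applies with the pair $(\bn_1,\bn_2)$ replaced by a full orthonormal frame adapted to the sub-tetrahedra.
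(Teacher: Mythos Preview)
Your argument is correct and, in fact, more direct than the paper's. The paper does not derive (\ref{zhang0})--(\ref{zhang2}) straight from (\ref{eq:cond-1})--(\ref{eq:cond-5}); instead it invokes the operator $\Pi_h$ of \cite{sf-wg-part-II} acting on smooth functions, lifts $\bv=\{\bv_0,\bv_b\}\in V_h$ to some $\bu\in\bH^1(\Omega)$ with $Q_h\bu=\bv$, sets $\Pi_h\bv:=\Pi_h\bu$, and checks (\ref{zhang0})--(\ref{zhang1}) via the $L^2$-projection identities $(\bu,\bq)_T=(Q_0\bu,\bq)_T$ and $\langle\bu\cdot\bn,w\rangle_e=\langle Q_b\bu\cdot\bn,w\rangle_e$. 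For (\ref{zhang2}) it uses the scaling bound $\|\Pi_h\bw\|_T\le C(\|\Pi_T^{k-1}\bw\|_T+h_T\|Q_b\bw\cdot\bn\|_{\pT})$ applied to $\bw=\bu-\bv_0$. Your route avoids the smooth lifting entirely: you read (\ref{zhang0}) off (\ref{eq:cond-2})--(\ref{eq:cond-3}) via the $(\bn_1,\bn_2)$ decomposition, and for (\ref{zhang2}) you exploit the inclusion $[P_k(T)]^d\subset\Lambda_k(T)$ to place $\bm{\chi}=\Pi_h\bv-\bv_0$ in $\Lambda_k(T)$ and identify its only nonzero degrees of freedom as the boundary normal moments of $(\bv_b-\bv_0)\cdot\bn$. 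This is cleaner and makes the dependence on the local DOFs explicit; the paper's route has the slight advantage of packaging both the interior-moment and scaling facts into a single reference to \cite{sf-wg-part-II}. One small wording fix: where you write that the interior moments of $\bm{\chi}$ vanish ``by (\ref{zhang0})'', the (\ref{eq:cond-3})-moments are over individual $T_i$ and do not follow from (\ref{zhang0}); they vanish directly by subtracting $\bv_0$ from both sides of (\ref{eq:cond-3}).
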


\begin{proof} 
A projection operator $\Pi_h$ is defined in \cite{sf-wg-part-II} for smooth functions
  satisfying \eqref{zhang0}-\eqref{zhang1}.
Then by the definition of weak divergence $\nabla_w\cdot\bv$ For any $\bv=\{\bv_0,\bv_b\}$, we can select 
any smooth function $\bu \in\b H^1(\Omega)$  such that $Q_h \bu=\bv$, i.e.,  $Q_0 \bu =\bv_0$
  and $Q_b \bu=\bv_b$.  We note that $Q_h$ is an on-to mapping from $\b H^1(\Omega)$ to $V_h$.
Now we define
\a{ \Pi_h \bv =\Pi_h \bu.  }
It follows that
\a{  (\Pi_h\bv,\bq)_T&=(\Pi_h\bu,\bq)_T=  (Q_0\bu,\bq)_T= (\bv_0, \bq)_T 
} for all $\bq\in [P_{k-1}(T)]^d$.  For the same reason, we have
\a{  \l\Pi_h\bv\cdot\bn,\;w\r_e&=\l\Pi_h\bu\cdot\bn,\;w\r_e=
       \l \bu \cdot\bn,\;w\r_e=  \l Q_b \bu \cdot\bn,\;w\r_e  
       =\l  \bv_b\cdot\bn,\;w\r_e }
for all $w\in P_{k}(e), \ e\subset\pT$.
From interpolation/projection data and scaling argument, we have
\a{  \| \Pi_h \b u \|_T &\le C (\|\Pi_T^{k-1} \b u \|_0 + h_T \| Q_b \bu \cdot\bn\|_{\partial T}), }
where $\Pi_T^{k-1}$ is the $L^2$-projection to $[P_{k-1}(T)]^d$.
Finally we derive 
\a{ \|\Pi_h\bv-\bv_0\|^2& = \|\Pi_h (\bu -\bv_0) \|^2 \\
       & \le C \sum_{T\in \T_h}\Big(  \| \Pi_T^{k-1}  (Q_0  \bu -\bv_0) \|_T^2 
                  + h_T^2 \| Q_b (\bu-\bv_0) \cdot\bn\|_{\partial T}^2 \Big)\\
     &= Ch_T^2 \sum_{T\in \T_h} \|  (\bv_b -\bv_0) \cdot\bn\|_{\partial T}^2 \\ 
     &\le Ch_T^2 \sum_{T\in \T_h} \|  \bv_b -\bv_0 \|_{\partial T}^2,   }
and thus complete the proof.
\end{proof}

\section{Numerical Scheme}\label{Sect:Scheme}
This section contributes to develop the new weak Galerkin finite element scheme and investigate the well-posedness for the proposed scheme.
\subsection{Finite Element Scheme}

We start this section by introducing the following simple WG finite element scheme without stabilizers.

\begin{algorithm}\label{Algorithm:WG1}
Our new numerical approximation for (\ref{eq:pde-1})-(\ref{eq:bc}) is seeking $\bu_h\in V_h^0$ and $p_h\in W_h$ such that for all $\bv\in V_h^0$ and $w\in W_h$,
\begin{eqnarray}
(\nu\nabla_w\bu_h,\ \nabla_w\bv)-(\nabla_w\cdot\bv,\;p_h)&=&(f,\;\Pi_h\bv),\label{wg1}\\
(\nabla_w\cdot\bu_h,\;w)&=&0.\label{wg2}
\end{eqnarray}
\end{algorithm}

In comparison, we shall also cite the following stabilizer free weak Galerkin finite element scheme in \cite{sf-sc-wg-stokes}.
\begin{algorithm}\cite{sf-sc-wg-stokes}.\label{Algorithm:WG2}
A numerical approximation for (\ref{eq:pde-1})-(\ref{eq:bc})
is seeking $\bu_h\in V_h$ and $p_h\in W_h$ such that for all $\bv\in V_h^0$ and $w\in W_h$,
\begin{eqnarray}
(\nu\nabla_w\bu_h,\nabla_w\bv)-(\nabla_w\cdot\bv, p_h)&=&({\bf f}, \bv),\label{w1}\\
(\nabla_w\cdot\bu_h, w)&=&0.\label{w2}
\end{eqnarray}
\end{algorithm}

\begin{remark}
It is noted that Algorithm~\ref{Algorithm:WG1} and Algorithm~\ref{Algorithm:WG2} share the same stiffness matrix.
\end{remark}

\subsection{Well Posedness}\label{Sect:FEScheme}
Let $Q_0$ and $Q_b$ be the two element-wise defined $L^2$ projections onto $[P_k(T)]^d$ and $[P_{k}(e)]^d$ with $e\subset\partial T$ on $T$ respectively for velocity. Denote by $\mathcal{Q}_h$ the element-wise defined $L^2$ projection onto $P_{k}(T)$ on each element $T$ for pressure variable. Let $\Q_h$ be the element-wise defined $L^2$ projection onto $\myLambda_k(T)$ on each element $T$ for the proximation of $\nabla\bu$.  Finally we define $Q_h\bu=\{Q_0\bu,Q_b\bu\}\in V_h$ for the true solution $\bu$. The following lemma reveal commutative properties for weak gradient $\nabla_w$ and weak divergence $\nabla_w\cdot$.

\begin{lemma}
Let $\boldsymbol\phi\in [H_0^1(\Omega)]^d$, then on  $T\in\T_h$
\begin{eqnarray}
\nabla_w Q_h\boldsymbol\phi &=&\Q_h\nabla\boldsymbol\phi,\label{key1}\\
\nabla_w \cdot Q_h\boldsymbol\phi &=&\mathcal{Q}_h\nabla\cdot\boldsymbol\phi.\label{key2}
\end{eqnarray}
\end{lemma}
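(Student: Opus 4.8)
The plan is to prove both commutativity identities \eqref{key1} and \eqref{key2} directly from the defining relations of the weak gradient \eqref{wg} and weak divergence \eqref{wd}, using the characterizing properties of the $L^2$-projections $Q_0$, $Q_b$, $\mathcal{Q}_h$, and $\Q_h$. The key observation driving both arguments is integration by parts on each $T\in\T_h$: for a genuinely smooth $\boldsymbol\phi$ and a test tensor $\bm{\tau}\in\myLambda_k(T)$ one has $(\nabla\boldsymbol\phi,\bm{\tau})_T = -(\boldsymbol\phi,\nabla\cdot\bm{\tau})_T + \l\boldsymbol\phi,\bm{\tau}\cdot\bn\r_\pT$, and similarly $(\nabla\cdot\boldsymbol\phi,\tau)_T = -(\boldsymbol\phi,\nabla\tau)_T + \l\boldsymbol\phi\cdot\bn,\tau\r_\pT$ for scalar $\tau\in P_k(T)$.

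For \eqref{key1}, I would start from the definition of $\nabla_w Q_h\boldsymbol\phi$: for any $\bm{\tau}\in\myLambda_k(T)$,
\[
(\nabla_w Q_h\boldsymbol\phi,\bm{\tau})_T = -(Q_0\boldsymbol\phi,\nabla\cdot\bm{\tau})_T + \l Q_b\boldsymbol\phi,\bm{\tau}\cdot\bn\r_\pT.
\]
Since $\nabla\cdot\bm{\tau}\in[P_k(T)]^d$ (this is exactly why $\myLambda_k(T)$ was defined with the constraint $\nabla\cdot\bpsi\in[P_k(T)]^d$) and $Q_0$ is the $L^2$-projection onto $[P_k(T)]^d$, we have $(Q_0\boldsymbol\phi,\nabla\cdot\bm{\tau})_T = (\boldsymbol\phi,\nabla\cdot\bm{\tau})_T$. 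Likewise, since $\bm{\tau}\cdot\bn|_e\in[P_k(e)]^d$ (again built into the definition of $\myLambda_k(T)$) and $Q_b$ is the $L^2$-projection onto $[P_k(e)]^d$, we get $\l Q_b\boldsymbol\phi,\bm{\tau}\cdot\bn\r_\pT = \l\boldsymbol\phi,\bm{\tau}\cdot\bn\r_\pT$. Combining and applying integration by parts gives $(\nabla_w Q_h\boldsymbol\phi,\bm{\tau})_T = (\nabla\boldsymbol\phi,\bm{\tau})_T$. Finally, because $\nabla_w Q_h\boldsymbol\phi|_T\in\myLambda_k(T)$ and $\Q_h$ is the $L^2$-projection onto $\myLambda_k(T)$, the identity $(\nabla_w Q_h\boldsymbol\phi,\bm{\tau})_T=(\nabla\boldsymbol\phi,\bm{\tau})_T=(\Q_h\nabla\boldsymbol\phi,\bm{\tau})_T$ for all $\bm{\tau}\in\myLambda_k(T)$ forces $\nabla_w Q_h\boldsymbol\phi=\Q_h\nabla\boldsymbol\phi$.

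The proof of \eqref{key2} is parallel but simpler: from \eqref{wd}, for $\tau\in P_k(T)$,
\[
(\nabla_w\cdot Q_h\boldsymbol\phi,\tau)_T = -(Q_0\boldsymbol\phi,\nabla\tau)_T + \l Q_b\boldsymbol\phi\cdot\bn,\tau\r_\pT.
\]
Here $\nabla\tau\in[P_{k-1}(T)]^d\subset[P_k(T)]^d$ so $(Q_0\boldsymbol\phi,\nabla\tau)_T=(\boldsymbol\phi,\nabla\tau)_T$, and $\tau|_e\in P_k(e)$ so $\l Q_b\boldsymbol\phi\cdot\bn,\tau\r_\pT=\l\boldsymbol\phi\cdot\bn,\tau\r_\pT$; integration by parts then yields $(\nabla_w\cdot Q_h\boldsymbol\phi,\tau)_T=(\nabla\cdot\boldsymbol\phi,\tau)_T$, and since $\nabla_w\cdot Q_h\boldsymbol\phi|_T\in P_k(T)$ is the $L^2$-projection onto $P_k(T)$ of $\nabla\cdot\boldsymbol\phi$, it equals $\mathcal{Q}_h\nabla\cdot\boldsymbol\phi$. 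I do not anticipate a genuine obstacle here; the only point requiring care is making sure that the regularity of $\boldsymbol\phi\in[H_0^1(\Omega)]^d$ is enough to make the boundary traces $\l\boldsymbol\phi,\bm{\tau}\cdot\bn\r_\pT$ and the element-wise integration by parts meaningful — this is standard, as $H^1$ traces on $\pT$ are well-defined and the identities are really about the defining projections, not about extra smoothness.
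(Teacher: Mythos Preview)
Your proposal is correct and follows essentially the same argument as the paper: start from the defining relations \eqref{wg} and \eqref{wd}, drop $Q_0$ and $Q_b$ using that $\nabla\cdot\bm{\tau}\in[P_k(T)]^d$ and $\bm{\tau}\cdot\bn|_e\in[P_k(e)]^d$, integrate by parts, and identify the result with the $L^2$-projections $\Q_h$ and $\mathcal{Q}_h$. Your write-up is in fact slightly more explicit than the paper's in justifying why the projections can be removed.
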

\begin{proof}
Using (\ref{wg}) and  integration by parts, we have that for
any $\tau\in \Lambda_k(T)$
\begin{eqnarray*}
(\nabla_w Q_h\boldsymbol\phi,\tau)_T &=& -(Q_0\boldsymbol\phi,\nabla\cdot\tau)_T
+\langle Q_b\boldsymbol\phi,\tau\cdot\bn\rangle_{\pT}\\
&=& -(\boldsymbol\phi,\nabla\cdot\tau)_T
+\langle \boldsymbol\phi,\tau\cdot\bn\rangle_{\pT}\\
&=&(\nabla \boldsymbol\phi,\tau)_T=(\Q_h\nabla\boldsymbol\phi,\tau)_T.
\end{eqnarray*}
The equation above proves the identity (\ref{key1}).

It follows from  (\ref{wd}) and  integration by parts that for any $w\in P_{k}(T)$
\begin{eqnarray*}
(\nabla_w \cdot Q_h\boldsymbol\phi,w)_T &=& -(Q_0\boldsymbol\phi,\nabla w)_T
+\langle Q_b\boldsymbol\phi\cdot\bn,w\rangle_{\pT}\\
&=& -(\boldsymbol\phi,\nabla w)_T
+\langle \boldsymbol\phi\cdot\bn,w\rangle_{\pT}\\
&=&(\nabla\cdot \boldsymbol\phi,w)_T=(\mathcal{Q}_h\nabla\cdot\boldsymbol\phi,w)_T,
\end{eqnarray*}
which proves (\ref{key2}).
\end{proof}

For any function $\varphi\in H^1(T)$, the following trace
inequality holds true (see \cite{wymix} for details):
\begin{equation}\label{trace}
\|\varphi\|_{e}^2 \leq C \left( h_T^{-1} \|\varphi\|_T^2 + h_T
\|\nabla \varphi\|_{T}^2\right).
\end{equation}

 We introduce two semi-norms $\3bar \bv\3bar$ and  $\|\bv\|_{1,h}$
   for any $\bv\in V_h$ as follows:
\begin{eqnarray}
\3bar \bv\3bar^2 &=& \sum_{T\in\T_h}(\nabla_w\bv,\nabla_w\bv)_T, \label{norm1}\\
\|\bv\|_{1,h}^2&=&\sum_{T\in \T_h}\|\nabla \bv_0\|_T^2+\sum_{T\in\T_h}h_T^{-1}\|\bv_0-\bv_b\|_{\pT}^2.\label{norm2}
\end{eqnarray}

It is easy to see that $\|\bv\|_{1,h}$ defines a norm in $V_h^0$.
Next we will show that $\3bar  \cdot \3bar$ also defines a norm in $V_h^0$ by proving the equivalence of $\3bar\cdot\3bar$ and $\|\cdot\|_{1,h}$ in $V_h$.

The following norm equivalence has been proved in  \cite{sf-wg-part-II} for each component of $\bv$,
\begin{equation}\label{happy}
C_1\|\bv\|_{1,h}\le \3bar \bv\3bar\le C_2 \|\bv\|_{1,h} \quad\forall \bv\in V_h.
\end{equation}

The inf-sup condition for the finite element  formulation (\ref{wg1})-(\ref{wg2}) is derived in the following lemma.

\smallskip
\begin{lemma}\label{Lemma:inf-sup}
There exists a positive constant $\beta$ independent of $h$ such that for all $\rho\in W_h$,
\begin{equation}\label{inf-sup}
\sup_{\bv\in V_h}\frac{(\nabla_w\cdot\bv,\rho)}{\3bar\bv\3bar}\ge \beta
\|\rho\|.
\end{equation}
\end{lemma}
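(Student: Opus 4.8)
The plan is to establish the inf-sup condition \eqref{inf-sup} by the standard Fortin-type argument: given $\rho\in W_h$, find $\bv\in V_h$ whose weak divergence controls $\rho$ in $L^2$ while $\3bar\bv\3bar$ is controlled by $\|\rho\|$. First I would invoke the continuous inf-sup condition for the Stokes pair on $\Omega$: since $\rho\in W_h\subset L_0^2(\Omega)$, there exists $\tilde\bv\in [H_0^1(\Omega)]^d$ with $\nabla\cdot\tilde\bv=\rho$ and $\|\tilde\bv\|_1\le C\|\rho\|$. Then I would take $\bv=Q_h\tilde\bv=\{Q_0\tilde\bv,Q_b\tilde\bv\}\in V_h^0\subset V_h$ as the candidate.

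The key computation is to evaluate $(\nabla_w\cdot\bv,\rho)$. Using the commutativity identity \eqref{key2} from the lemma just proved, $\nabla_w\cdot Q_h\tilde\bv=\mathcal{Q}_h(\nabla\cdot\tilde\bv)=\mathcal{Q}_h\rho=\rho$, where the last equality holds because $\rho\in W_h$ is already a piecewise polynomial of degree $k$ and hence fixed by its own $L^2$-projection. Therefore $(\nabla_w\cdot\bv,\rho)=(\rho,\rho)=\|\rho\|^2$. For the denominator, I would use the norm equivalence \eqref{happy} to get $\3bar\bv\3bar\le C_2\|\bv\|_{1,h}=C_2\|Q_h\tilde\bv\|_{1,h}$, and then the standard boundedness of the projections $Q_0,Q_b$ (via the trace inequality \eqref{trace} and approximation estimates) to bound $\|Q_h\tilde\bv\|_{1,h}\le C\|\tilde\bv\|_1\le C\|\rho\|$. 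Combining, $\sup_{\bv\in V_h}(\nabla_w\cdot\bv,\rho)/\3bar\bv\3bar\ge \|\rho\|^2/(C\|\rho\|)=\beta\|\rho\|$ with $\beta=1/C$ independent of $h$.

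The main obstacle, and the only place requiring genuine care, is the bound $\|Q_h\tilde\bv\|_{1,h}\le C\|\tilde\bv\|_1$. This is the $H^1$-stability of the weak-Galerkin interpolation in the discrete norm $\|\cdot\|_{1,h}$ defined in \eqref{norm2}: the term $\sum_T\|\nabla Q_0\tilde\bv\|_T^2$ is handled by standard $L^2$-projection stability in $H^1$, while the jump term $\sum_T h_T^{-1}\|Q_0\tilde\bv-Q_b\tilde\bv\|_{\pT}^2$ requires writing $Q_0\tilde\bv-Q_b\tilde\bv=(Q_0\tilde\bv-\tilde\bv)-(Q_b\tilde\bv-\tilde\bv)$ on $\pT$, applying the trace inequality \eqref{trace} to the first piece and a trace/approximation estimate on the boundary to the second, both of which yield $O(h_T)$ times $\|\nabla\tilde\bv\|$-type quantities, so the $h_T^{-1}$ is absorbed. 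These are routine polytopal-mesh estimates that can be cited from \cite{wymix,sf-wg-part-II}; I would state the stability bound as a short intermediate step and reference those works rather than reprove it. Everything else is bookkeeping.
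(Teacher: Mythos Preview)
Your proposal is correct and is precisely the standard Fortin argument that the paper defers to \cite{sf-sc-wg-stokes}: lift $\rho$ to $\tilde\bv\in[H_0^1(\Omega)]^d$ with $\nabla\cdot\tilde\bv=\rho$, set $\bv=Q_h\tilde\bv$, and use \eqref{key2}. Note, however, that the step you call ``the main obstacle'' is actually immediate from \eqref{key1}, which gives $\3bar Q_h\tilde\bv\3bar=\|\Q_h\nabla\tilde\bv\|\le\|\nabla\tilde\bv\|\le\|\tilde\bv\|_1$ directly, so the detour through \eqref{happy} and the $H^1$-stability of $Q_0,Q_b$ is unnecessary.
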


\begin{lemma}\label{ue}
The weak Galerkin method (\ref{wg1})-(\ref{wg2}) has a unique solution.
\end{lemma}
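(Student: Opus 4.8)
The plan is to prove existence and uniqueness for the finite-dimensional square linear system \eqref{wg1}--\eqref{wg2} by the standard saddle-point argument: since the system is square, it suffices to show that the only solution of the homogeneous problem ($\mathbf f=0$) is the trivial one. So I would assume $\bu_h\in V_h^0$ and $p_h\in W_h$ satisfy
\[
(\nu\nabla_w\bu_h,\nabla_w\bv)-(\nabla_w\cdot\bv,p_h)=0\quad\forall\bv\in V_h^0,\qquad
(\nabla_w\cdot\bu_h,w)=0\quad\forall w\in W_h,
\]
and aim to conclude $\bu_h=0$ and $p_h=0$.

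The first step is to take $\bv=\bu_h$ in the first equation and $w=p_h$ in the second. Adding them, the coupling terms $(\nabla_w\cdot\bu_h,p_h)$ cancel, leaving $\nu\,\3bar\bu_h\3bar^2=0$. By the norm equivalence \eqref{happy}, $\3bar\cdot\3bar$ is a norm on $V_h^0$ (equivalently, $\|\cdot\|_{1,h}$ is a norm there, as already noted), so $\3bar\bu_h\3bar=0$ forces $\bu_h=0$. The second step is to recover $p_h=0$: with $\bu_h=0$, the first equation reduces to $(\nabla_w\cdot\bv,p_h)=0$ for all $\bv\in V_h^0$, and then the inf-sup condition of Lemma~\ref{Lemma:inf-sup}, namely $\beta\|p_h\|\le\sup_{\bv\in V_h}(\nabla_w\cdot\bv,p_h)/\3bar\bv\3bar=0$, yields $p_h=0$. (One should check that the supremum in \eqref{inf-sup} over $V_h$ may be replaced by one over $V_h^0$, or that the test functions used to realize the inf-sup bound lie in $V_h^0$; this is a routine point since $p_h\in W_h\subset L_0^2(\Omega)$ and the construction behind Lemma~\ref{Lemma:inf-sup} produces admissible test functions.)

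I do not expect any real obstacle here: the argument is the textbook Brezzi-type uniqueness proof, and all the nontrivial ingredients — coercivity of the bilinear form $(\nu\nabla_w\cdot,\nabla_w\cdot)$ on $V_h^0$ via \eqref{happy}, and the inf-sup stability \eqref{inf-sup} — are already established above. The only mild subtlety is making sure the counting argument (square system $\Rightarrow$ existence follows from uniqueness) is invoked cleanly, i.e. noting that $\dim V_h^0+\dim W_h$ equals the number of equations, so the coefficient matrix is square and injectivity implies invertibility; this is where I would phrase the conclusion.
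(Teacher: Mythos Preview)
Your argument is correct and is precisely the standard Brezzi-type uniqueness proof the paper has in mind; the paper does not spell out the proof but simply notes it is ``similar to the proofs in \cite{sf-sc-wg-stokes}'', which amounts to exactly the coercivity-plus-inf-sup argument you outlined.
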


\medskip

The proofs for Lemma \ref{Lemma:inf-sup} and Lemma \ref{ue} are similar to the proofs in \cite{sf-sc-wg-stokes}.

\section{Main Results}\label{Sect:MainResults}
This section contributes to the pressure-robust estimates for our proposed numerical scheme (\ref{wg1})-(\ref{wg2}). 
\subsection{Error Equations}\label{Sect:ErrorEquation}
In this section, we derive the equations that the errors satisfy.
Let $\be_h=Q_h\bu-\bu_h$ and $\varepsilon_h=\mathcal{Q}_hp-p_h$.

\begin{lemma}
The following error equations hold true for any $\bv\in V_h^0$ and $w\in W_h$,
\begin{eqnarray}
(\nu\nabla_w\be_h,\; \nabla_w\bv)-(\varepsilon_h,\;\nabla_w\cdot\bv)&=&\ell_1(\bu,\bv)+\ell_2(\bu,\bv),\label{ee1}\\
(\nabla_w\cdot\be_h,\ w)&=&0,\label{ee2}
\end{eqnarray}
where
\begin{eqnarray}
\ell_1(\bu,\bv)&=&(\nu\Delta\bu, \bv_0-\Pi_h\bv),\label{l1}\\
\ell_2(\bu,\bv)&=&\langle\nu(\nabla\bu-\Q_h\nabla\bu)\cdot\bn,\;\bv_0-\bv_b\rangle_{\partial\T_h}.\label{l2}
\end{eqnarray}
\end{lemma}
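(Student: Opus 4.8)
The plan is to derive the error equations by testing the continuous Stokes system against the reconstruction $\Pi_h\bv$ and then rewriting everything in terms of the weak operators acting on $Q_h\bu$. First I would take the momentum equation \eqref{eq:pde-1}, multiply by $\Pi_h\bv$ with $\bv=\{\bv_0,\bv_b\}\in V_h^0$, and integrate over $\Omega$. Since $\Pi_h\bv\in\tilde V_h\subset H(\operatorname{div};\Omega)$ and $\nabla\cdot\bu=0$, the pressure term $(\nabla p,\Pi_h\bv)=-(p,\nabla\cdot\Pi_h\bv)$; using the commuting/preserving property of $\Pi_h$ (that it preserves the weak divergence, i.e.\ $\nabla\cdot\Pi_h\bv=\nabla_w\cdot\bv$ in the appropriate projected sense, which follows from \eqref{zhang0}--\eqref{zhang1} together with the definition \eqref{wd} of $\nabla_w\cdot$) one rewrites this as $-(\mathcal{Q}_hp,\nabla_w\cdot\bv)$. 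The viscous term $-(\nu\Delta\bu,\Pi_h\bv)$ is the one that must be split: write it as $-(\nu\Delta\bu,\Pi_h\bv-\bv_0)-(\nu\Delta\bu,\bv_0)$; the first piece is exactly $-\ell_1(\bu,\bv)$ up to sign, and the second is treated by element-wise integration by parts.

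Next I would handle $-(\nu\Delta\bu,\bv_0)_T$ by integrating by parts on each $T$: $-(\nu\Delta\bu,\bv_0)_T=(\nu\nabla\bu,\nabla\bv_0)_T-\langle\nu\nabla\bu\cdot\bn,\bv_0\rangle_{\partial T}$. Summing over $T$ and inserting $\bv_b$ (whose jumps cancel since $\bv_b$ is single-valued and vanishes on $\partial\Omega$), the boundary term becomes $-\langle\nu\nabla\bu\cdot\bn,\bv_0-\bv_b\rangle_{\partial\T_h}$. Then I replace $\nabla\bu$ in the volume term by $\Q_h\nabla\bu$ at the cost of $\langle\nu(\nabla\bu-\Q_h\nabla\bu)\cdot\bn,\bv_0-\bv_b\rangle_{\partial\T_h}$, which is precisely $\ell_2(\bu,\bv)$; and by the commuting identity \eqref{key1}, $(\nu\Q_h\nabla\bu,\nabla\bv_0)_T$ can be converted, via the definition \eqref{wg} of $\nabla_w$ and integration by parts back the other way, into $(\nu\nabla_w Q_h\bu,\nabla_w\bv)_T=(\nu\Q_h\nabla\bu,\nabla_w\bv)_T$ — this uses that $\nabla_w\bv\in\myLambda_k(T)$ and the projection property so the relevant inner products coincide. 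Assembling all pieces gives $(\nu\nabla_w Q_h\bu,\nabla_w\bv)-(\mathcal{Q}_hp,\nabla_w\cdot\bv)=(\bbf,\Pi_h\bv)+\ell_1(\bu,\bv)+\ell_2(\bu,\bv)$. Subtracting the scheme \eqref{wg1} yields \eqref{ee1}.

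For the second equation \eqref{ee2} the argument is short: apply the commuting property \eqref{key2}, $\nabla_w\cdot Q_h\bu=\mathcal{Q}_h\nabla\cdot\bu=0$ since $\nabla\cdot\bu=0$, and then \eqref{wg2} gives $\nabla_w\cdot\bu_h=0$ tested against $W_h$; subtracting gives $(\nabla_w\cdot\be_h,w)=0$ for all $w\in W_h$.

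I expect the main obstacle to be the careful bookkeeping in the viscous term — specifically, justifying the identity $(\nu\Q_h\nabla\bu,\nabla\bv_0)_T+(\text{boundary terms})=(\nu\nabla_w Q_h\bu,\nabla_w\bv)_T$, i.e.\ that integration by parts against the test function $\bv_0$ on $\partial T$ reproduces exactly the weak-gradient pairing. One must check that the boundary data used in \eqref{wg} ($\bv_b$ on $\partial T$) and the interior data ($\bv_0$) are matched by the degrees of freedom of $\myLambda_k(T)$ so that no extra residual appears; this is where the precise choice of the space $\myLambda_k(T)$ in \eqref{lambda} and the projection $\pi_h$ in \eqref{P1}--\eqref{P2} are used. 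The treatment of $\ell_1$ also quietly relies on $\Pi_h\bv$ being $H(\operatorname{div})$-conforming so that no inter-element jump terms appear when integrating the pressure term by parts globally — worth stating explicitly but not hard. Everything else is routine integration by parts and invoking the already-established commuting identities \eqref{key1}--\eqref{key2} and the reconstruction properties \eqref{zhang0}--\eqref{zhang2}.
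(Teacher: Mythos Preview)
Your proposal is correct and follows essentially the same route as the paper: test \eqref{eq:pde-1} against $\Pi_h\bv$, split off $\ell_1$, integrate $-(\nu\Delta\bu,\bv_0)$ by parts and use \eqref{key1} together with the definition \eqref{wg} to reach $(\nu\nabla_w Q_h\bu,\nabla_w\bv)-\ell_2$, rewrite the pressure term via the properties \eqref{zhang0}--\eqref{zhang1} of $\Pi_h$ (the paper does this by local integration by parts rather than your global identity $\nabla\cdot\Pi_h\bv=\nabla_w\cdot\bv$, but the two are equivalent), and subtract \eqref{wg1}; equation \eqref{ee2} follows from \eqref{key2} and \eqref{wg2} exactly as you describe. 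The only imprecision is the phrase ``at the cost of'' in your derivation of $\ell_2$: the volume replacement $(\nabla\bu,\nabla\bv_0)_T=(\Q_h\nabla\bu,\nabla\bv_0)_T$ is actually free (because $\nabla\bv_0\in\myLambda_k(T)$), and $\ell_2$ arises instead as the difference between the original boundary term $-\langle\nabla\bu\cdot\bn,\bv_0-\bv_b\rangle_{\partial\T_h}$ and the residual boundary term $+\langle\Q_h\nabla\bu\cdot\bn,\bv_0-\bv_b\rangle_{\partial\T_h}$ that appears when you convert $(\Q_h\nabla\bu,\nabla\bv_0)_T$ to $(\nabla_w Q_h\bu,\nabla_w\bv)_T$ via \eqref{wg}.
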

\begin{proof}
For a $\bv\in V_h$, we test (\ref{eq:pde-1}) by
$\Pi_h\bv$ to obtain
\begin{equation}\label{mm0}
-(\nu\Delta\bu,\;\Pi_h\bv)+(\nabla p,\ \Pi_h\bv)=(\bbf,\; \Pi_h\bv).
\end{equation}
Obviously,
\begin{equation}\label{mm00}
(\nu\Delta\bu,\;\Pi_h\bv)=(\nu\Delta\bu,\;\bv_0)-\ell_1(\bu,\bv).
\end{equation}
It follows from integration by parts and the fact $\langle\nabla \bu\cdot\bn,\bv_b\rangle_{\partial\T_h}=0$
\begin{equation}\label{d2}
-(\Delta\bu,\;\bv_0)=(\nabla\bu,\nabla \bv_0)_{\T_h}- \langle
\nabla \bu\cdot\bn,\bv_0-\bv_b\rangle_{\partial\T_h}.
\end{equation}
By integration by parts, (\ref{wg}) and (\ref{key1}),
\begin{eqnarray}
(\nabla \bu,\nabla \bv_0)_{\T_h}&=&(\Q_h\nabla\bu,\nabla \bv_0)_{\T_h}\nonumber\\
&=&-(\bv_0,\nabla\cdot (\Q_h\nabla \bu))_{\T_h}+\langle \bv_0, \Q_h\nabla \bu\cdot\bn\rangle_{\partial\T_h}\nonumber\\
&=&(\Q_h\nabla \bu, \nabla_w \bv)+\langle \bv_0-\bv_b,\Q_h\nabla \psi\cdot\bn\rangle_{\partial\T_h}\nonumber\\
&=&( \nabla_w Q_h\bu, \nabla_w \bv)+\langle \bv_0-\bv_b,\Q_h\nabla \bu\cdot\bn\rangle_{\partial\T_h}.\label{d3}
\end{eqnarray}
Combining (\ref{d2}) and (\ref{d3}) gives
\begin{eqnarray}
-(\nu\Delta\bu,\;\bv_0)&=&(\nu\nabla_w Q_h\bu, \nabla_w \bv)-\ell_2(\bu,\bv).\label{d4}
\end{eqnarray}
By (\ref{mm00}) and (\ref{d4}), we obtain
\begin{eqnarray}
-(\nu\Delta\bu,\;\Pi_h\bv)&=&(\nu\nabla_w Q_h\bu, \nabla_w \bv)-\ell_1(\bu,\bv)-\ell_2(\bu,\bv).\label{d44}
\end{eqnarray}
Using integration by parts and the definition of $\Pi_h$,  we have
\begin{eqnarray*}
(\nabla p,\ \Pi_h\bv)&=& -(p,\nabla\cdot\Pi_h\bv)+\l p, \Pi_h\bv\cdot\bn\r_{\partial\T_h}\\
&=& -(\mathcal{Q}_hp,\;\nabla\cdot\Pi_h\bv)\\
&=&(\nabla\mathcal{Q}_hp,\;\Pi_h\bv)_{\T_h}-\l \mathcal{Q}_hp,\; \Pi_h\bv\cdot\bn\r_{\partial\T_h}\\
&=&(\nabla\mathcal{Q}_hp,\;\bv_0)_{\T_h}-\l \mathcal{Q}_hp,\; \bv_b\cdot\bn\r_{\partial\T_h}\\
&=& -(\mathcal{Q}_hp,\;\nabla_w\cdot\bv),
\end{eqnarray*}
which implies
\begin{equation}\label{mm2}
(\nabla p,\ \Pi_h\bv)=-(\mathcal{Q}_hp,\nabla_w\cdot\bv).
\end{equation}
Substituting (\ref{d4}) and (\ref{mm2}) into (\ref{mm0}) gives
\begin{equation}\label{mm3}
(\nu\nabla_wQ_h\bu,\nabla_w\bv)-(\mathcal{Q}_hp,\nabla_w\cdot\bv)=(\bbf,\Pi_h\bv)+\ell_1(\bu,\bv)+\ell_2(\bu,\bv).
\end{equation}
The difference of (\ref{mm3}) and (\ref{wg1}) implies
\begin{equation}\label{mm4}
(\nu\nabla_w\be_h,\nabla_w\bv)-(\varepsilon_h,\nabla_w\cdot\bv)=\ell_1(\bu,\bv)+\ell_2(\bu,\bv)\quad\forall\bv\in V_h^0.
\end{equation}
Testing equation (\ref{eq:pde-2}) by $w\in W_h$ and using (\ref{key2}) give
\begin{equation}\label{mm5}
(\nabla\cdot\bu,\ w)=(\mathcal{Q}_h\nabla\cdot\bu,\ w)=(\nabla_w\cdot Q_h\bu,\ w)=0.
\end{equation}
The difference of (\ref{mm5}) and (\ref{wg2}) implies (\ref{ee2}). Thus, we have proved the lemma.
\end{proof}

\subsection{Error Estimates in Energy Norm}\label{Section:error-analysis}
In this section, we shall establish one super-convergence order
for the velocity approximation $\bu_h$ in $\3bar\cdot\3bar$-norm, $L^2$-norm and pressure approximation $p_h$ in $L^2$-norm with respective to the optimal order error estimate.

\begin{lemma}
Let $\bu\in [H^{k+2}(\Omega)]^d$ and
$\bv\in V_h$. Then, the following estimates hold true
\begin{eqnarray}
|\ell_1(\bu,\ \bv)|&\le& C\nu h^{k+1}|\bu|_{k+2}\3bar \bv\3bar,\label{mmm2}\\
|\ell_2(\bu,\ \bv)|&\le& C\nu h^{k+1}|\bu|_{k+2}\3bar \bv\3bar.\label{mmm3}
\end{eqnarray}
\end{lemma}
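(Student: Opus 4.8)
The plan is to estimate the two functionals separately, using in each case the orthogonality of the relevant $L^2$-projection together with a scaling/trace argument and the norm equivalence \eqref{happy}. For $\ell_1(\bu,\bv)=(\nu\Delta\bu,\ \bv_0-\Pi_h\bv)$, the key observation is that $\Pi_h$ reproduces $\bv_0$ against polynomials of degree $k-1$ on each $T$; indeed \eqref{zhang0} gives $(\Pi_h\bv-\bv_0,\ \bq)_T=0$ for all $\bq\in[P_{k-1}(T)]^d$. Hence I can subtract from $\nu\Delta\bu$ its elementwise $L^2$-projection onto $[P_{k-1}(T)]^d$ for free, writing
\begin{equation*}
\ell_1(\bu,\bv)=\sum_{T\in\T_h}\bigl(\nu(\Delta\bu-\Pi_T^{k-1}\Delta\bu),\ \bv_0-\Pi_h\bv\bigr)_T.
\end{equation*}
Then by Cauchy--Schwarz, the approximation bound $\|\Delta\bu-\Pi_T^{k-1}\Delta\bu\|_T\le Ch_T^{k}|\bu|_{k+2,T}$, and the estimate \eqref{zhang2} for $\|\Pi_h\bv-\bv_0\|$ in terms of $h_T^{1/2}\|\bv_0-\bv_b\|_{\pT}$, one extra power of $h$ comes out: the product of $h_T^{k}$ and $h_T^{1/2}$ with the $h_T^{-1/2}$ that is missing from $\|\bv_0-\bv_b\|_{\pT}$ relative to the $\|\cdot\|_{1,h}$-norm gives $h^{k+1}$. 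Combined with $\|\cdot\|_{1,h}\le C\3bar\cdot\3bar$ from \eqref{happy}, this yields \eqref{mmm2}.

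For $\ell_2(\bu,\bv)=\langle\nu(\nabla\bu-\Q_h\nabla\bu)\cdot\bn,\ \bv_0-\bv_b\rangle_{\partial\T_h}$, the argument is the more standard WG trace estimate. On each edge $e\subset\pT$ I apply Cauchy--Schwarz to get $\|(\nabla\bu-\Q_h\nabla\bu)\cdot\bn\|_e\,\|\bv_0-\bv_b\|_e$, then sum over edges with a weighted Cauchy--Schwarz so that $\|\bv_0-\bv_b\|_{\pT}$ is paired with the factor $h_T^{-1/2}$ needed to recover $\|\bv\|_{1,h}$. It remains to bound $h_T^{1/2}\|(\nabla\bu-\Q_h\nabla\bu)\cdot\bn\|_{\pT}$. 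For this I use the trace inequality \eqref{trace} applied to $\nabla\bu-\Q_h\nabla\bu$, which gives $\|\nabla\bu-\Q_h\nabla\bu\|_e^2\le C(h_T^{-1}\|\nabla\bu-\Q_h\nabla\bu\|_T^2+h_T\|\nabla(\nabla\bu-\Q_h\nabla\bu)\|_T^2)$; here I should use that $\Q_h$ is the $L^2$-projection onto $\myLambda_k(T)$, which contains $[P_k(T)]^{d\times d}$, so the approximation estimate \eqref{zhang22} (or its analogue for $\Q_h$) gives both $\|\nabla\bu-\Q_h\nabla\bu\|_T\le Ch_T^{k+1}|\bu|_{k+2,T}$ and, after an inverse inequality on the polynomial part, control of the gradient term at order $h_T^{k}$. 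Multiplying through by $h_T^{1/2}$ and then by the $h_T^{-1/2}$ from the pairing produces $h_T^{k+1}$ again, and \eqref{happy} finishes \eqref{mmm3}.

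The main obstacle I anticipate is the bookkeeping around the projection $\Q_h$ onto the nonstandard space $\myLambda_k(T)$ rather than onto plain $[P_k(T)]^{d\times d}$: I need to confirm that $\Q_h$ still enjoys the approximation property $\|\nabla\bu-\Q_h\nabla\bu\|_T\le Ch_T^{k+1}|\bu|_{k+2,T}$ and an accompanying $H^1$-type or inverse estimate valid on the sub-triangulated polygon, uniformly in the shape-regularity constants of $\T_h$. This is exactly the content of the lemma containing \eqref{zhang22} (proved in \cite{sf-sc-wg-stokes}), so I would invoke that; the only subtlety is tracking that the trace inequality \eqref{trace} and any inverse inequality are applied on the triangles $T_i$ of the sub-triangulation when $\Q_h\nabla\bu$ is only piecewise $RT_k$, then reassembled over $T$. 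Everything else is routine Cauchy--Schwarz plus the cited estimates \eqref{zhang2}, \eqref{happy}, and \eqref{trace}.
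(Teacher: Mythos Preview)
Your proposal is correct and follows essentially the same route as the paper: for $\ell_1$ you subtract the elementwise $L^2$-projection of $\Delta\bu$ onto $[P_{k-1}(T)]^d$ (using \eqref{zhang0}), apply Cauchy--Schwarz together with \eqref{zhang2} and \eqref{happy}; for $\ell_2$ you use the weighted Cauchy--Schwarz on $\partial\T_h$, the trace inequality \eqref{trace}, and the approximation property of $\Q_h$, exactly as the paper does (the paper is terser and simply writes ``trace inequality'' where you spell out the split into $L^2$ and gradient pieces plus the inverse estimate on the sub-triangulation). Your caution about $\Q_h$ mapping into $\myLambda_k(T)$ rather than $[P_k(T)]^{d\times d}$ is well placed, and your resolution---$[P_k(T)]^{d\times d}\subset\myLambda_k(T)$ gives the $L^2$ bound by best approximation, and the gradient term is handled by an inverse inequality on each $T_i$---is the right one.
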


\begin{proof}
Let $\mathcal{P}_{k-1}$ be an element-wise $L^2$ projection onto $P_{k-1}(T)$ on each element $T\in\T_h$. It follows from (\ref{zhang0}), the definition of $\mathcal{P}_{k-1}$, (\ref{zhang2}), and (\ref{happy})
\begin{eqnarray*}
|\ell_1(\bu,\bv)|&=&|(\nu\Delta\bu, \Pi_h\bv-\bv_0)|\\
&=&|(\nu\Delta\bu-\mathcal{P}_{k-1}\Delta\bu, \Pi_h\bv-\bv_0)|\\
&\le&C\nu h^{k+1}|u|_{k+2}\|\bv\|_{1,h}\\
&\le&C\nu h^{k+1}|u|_{k+2}\3bar\bv\3bar.
\end{eqnarray*}
It follows from the Cauchy-Schwarz inequality, the trace inequality (\ref{trace}), and (\ref{happy})
\begin{eqnarray*}
|\ell_2(\bu,\bv)|&\le&\left| \langle\nu (\nabla \bu-\Q_h\nabla
\bu)\cdot\bn,\; \bv_0-\bv_b\rangle_{\pT_h} \right|\nonumber\\
&\le& \nu\left(\sum_{T\in\T_h}h_T\|\nabla \bu-\Q_h\nabla \bu\|^2_\pT\right)^{1/2}
\left(\sum_{T\in\T_h}h_T^{-1}\|\bv_0-\bv_b\|^2_\pT\right)^{1/2}\nonumber \\
&\le&  C\nu h^{k+1}|\bu|_{k+2}\3bar\bv\3bar.
\end{eqnarray*}
We have proved the lemma.
\end{proof}

\begin{theorem}\label{h1-bd}
Let $(\bu_h,p_h)\in V_h^0\times W_h$ be the solution of (\ref{wg1})-(\ref{wg2}). Then, the following error
estimates hold true
\begin{eqnarray}
\3bar Q_h\bu-\bu_h\3bar &\le& Ch^{k+1}|\bu|_{k+2},\label{errv}\\
\|\mathcal{Q}_hp-p_h\|&\le& C\nu h^{k+1}|\bu|_{k+2}.\label{errp}
\end{eqnarray}
\end{theorem}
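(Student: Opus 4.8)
The plan is to run the standard saddle-point energy argument on the error equations \eqref{ee1}--\eqref{ee2}, fed by the bounds \eqref{mmm2}--\eqref{mmm3} on $\ell_1,\ell_2$ and by the inf-sup condition \eqref{inf-sup}. For the velocity estimate \eqref{errv}, I would first note that since $\bu$ vanishes on $\partial\Omega$ its boundary projection $Q_b\bu$ vanishes on boundary faces, so $\be_h=Q_h\bu-\bu_h\in V_h^0$ and is a legitimate test function. Setting $\bv=\be_h$ in \eqref{ee1} and invoking \eqref{ee2} with $w=\varepsilon_h\in W_h$ to annihilate the term $(\varepsilon_h,\nabla_w\cdot\be_h)$ leaves
\[
\nu\,\3bar\be_h\3bar^2=\ell_1(\bu,\be_h)+\ell_2(\bu,\be_h).
\]
Bounding the right-hand side with \eqref{mmm2}--\eqref{mmm3} gives $\nu\3bar\be_h\3bar^2\le C\nu h^{k+1}|\bu|_{k+2}\3bar\be_h\3bar$, and dividing through by $\nu\3bar\be_h\3bar$ (the case $\be_h=0$ being trivial) produces \eqref{errv}. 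A point worth flagging is that $\nu$ cancels here, so the bound on $\3bar Q_h\bu-\bu_h\3bar$ is viscosity-independent, in line with the near-pressure-robust structure of the scheme.

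For the pressure estimate \eqref{errp} I would rearrange \eqref{ee1} as
\[
(\varepsilon_h,\nabla_w\cdot\bv)=(\nu\nabla_w\be_h,\nabla_w\bv)-\ell_1(\bu,\bv)-\ell_2(\bu,\bv),\qquad\bv\in V_h^0,
\]
and estimate the right-hand side by the Cauchy--Schwarz inequality together with \eqref{mmm2}--\eqref{mmm3} and the bound \eqref{errv} just obtained, which yields $|(\varepsilon_h,\nabla_w\cdot\bv)|\le C\nu h^{k+1}|\bu|_{k+2}\3bar\bv\3bar$. Since $\varepsilon_h=\mathcal{Q}_hp-p_h\in W_h$ (the element-wise projection $\mathcal{Q}_h$ preserving the zero-mean constraint), the inf-sup inequality \eqref{inf-sup} then gives $\beta\|\varepsilon_h\|\le C\nu h^{k+1}|\bu|_{k+2}$, which is \eqref{errp}; note the factor $\nu$ now survives.

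I do not expect a genuine obstacle in this proof: all of the real difficulty has been front-loaded into the construction of the $H(\operatorname{div})$-preserving operator $\Pi_h$ and into establishing \eqref{mmm2}--\eqref{mmm3}. The only steps requiring a moment of care are checking that $\be_h\in V_h^0$ so that the error equations apply to it, recognizing that the weak divergence constraint \eqref{ee2} is exactly what removes the pressure from the velocity identity, and keeping track of the viscosity factor, which cancels in the velocity bound but not in the pressure bound.
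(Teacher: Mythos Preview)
Your proposal is correct and follows essentially the same route as the paper's proof: test \eqref{ee1} with $\bv=\be_h$, use \eqref{ee2} with $w=\varepsilon_h$ to kill the pressure term, apply \eqref{mmm2}--\eqref{mmm3} to obtain \eqref{errv}, then rearrange \eqref{ee1} and invoke the inf-sup condition \eqref{inf-sup} for \eqref{errp}. Your added remarks on verifying $\be_h\in V_h^0$, $\varepsilon_h\in W_h$, and tracking the viscosity factor are careful touches the paper leaves implicit.
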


\smallskip

\begin{proof}
By letting $\bv=\be_h$ in (\ref{ee1}) and $w=\varepsilon_h$ in
(\ref{ee2}) and then using the equation (\ref{ee2}), we have
\begin{eqnarray}
\nu\3bar \be_h\3bar^2&=&|\ell_1(\bu,\be_h)+\ell_2(\bu,\be_h)|.\label{main}
\end{eqnarray}
It then follows from (\ref{mmm2}) and (\ref{mmm3}) that
\begin{equation}\label{b-u}
\nu \3bar \be_h\3bar^2 \le C\nu h^{k+1}|\bu|_{k+2}\3bar \be_h\3bar.
\end{equation}
We have proved (\ref{errv}). To estimate
$\|\varepsilon_h\|$, we have from (\ref{ee1}) that
\[
(\varepsilon_h, \nabla\cdot\bv)=(\nu\nabla_w\be_h,\nabla_w\bv)-\ell_1(\bu, \bv)-\ell_2(\bu, \bv).
\]
Using (\ref{b-u}), (\ref{mmm2}) and
(\ref{mmm3}), we arrive at
\[
|(\varepsilon_h, \nabla\cdot\bv)|\le C\nu h^{k+1}|\bu|_{k+2}\3bar\bv\3bar.
\]
Combining the above estimate with the {\em inf-sup} condition
(\ref{inf-sup}) gives
\[
\|\varepsilon_h\|\le C\nu h^{k+1}|\bu|_{k+2}.
\]
We obtained estimate (\ref{errp}) and proved the theorem.
\end{proof}

\subsection{Error Estimates in $L^2$ Norm}\label{Sect:L2Error}

In this section, we derive super-convergence for velocity in the $L^2$-norm by duality argument.
Consider a dual problem that seeks $(\bpsi,\xi)$ satisfying
\begin{eqnarray}
-\Delta\bpsi+\nabla \xi&=\be_0 &\quad \mbox{in}\;\Omega,\label{dual-m}\\
\nabla\cdot\bpsi&=0 &\quad\mbox{in}\;\Omega,\label{dual-c}\\
\bpsi&= 0 &\quad\mbox{on}\;\partial\Omega.\label{dual-bc}
\end{eqnarray}
Assume that the dual problem (\ref{dual-m})-(\ref{dual-bc}) has the $\bH^{2}(\Omega)\times
H^1(\Omega)$-regularity property in the sense that the solution
$(\bpsi,\xi)\in \bH^{2}(\Omega)\times H^1(\Omega)$ and the
following a priori estimate holds true:
\begin{eqnarray}
\|\bpsi\|_{2}+\|\xi\|_1&\le& C\|\be_0\|.\label{reg}
\end{eqnarray}
We need the following lemma first.

\begin{lemma}
Let $\ell_1(\cdot,\cdot)$ and $\ell_2(\cdot,\cdot)$ be defined in (\ref{l1}) and (\ref{l2}), respectively. For  $\be_h=\{\be_0,\be_b\}=Q_h\bu-\bu_h\in V_h^0$ and $w\in W_h$, the following equations hold true,
\begin{eqnarray}
\|\be_0\|^2&=&\ell_1(\bu,Q_h\bpsi)+\ell_2(\bu,Q_h\bpsi)-\ell_2(\bpsi,\be_h)-\ell_3(\xi,\be_h).\label{wd1}
\end{eqnarray}
where
\[
\ell_3(\xi,\be_h)=\l \mathcal{Q}_h\xi-\xi, (\be_0-\be_b)\cdot\bn\r_{\partial\T_h}.
\]
\end{lemma}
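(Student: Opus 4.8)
The plan is to run the standard duality (Aubin–Nitsche) argument adapted to the weak Galerkin setting. I would start by testing the dual momentum equation \eqref{dual-m} against $\be_0$, writing $\|\be_0\|^2 = (-\Delta\bpsi + \nabla\xi,\ \be_0)$. Since $\be_0$ is only piecewise polynomial, I would not integrate by parts against $\be_0$ directly; instead I would exploit the error equations \eqref{ee1}--\eqref{ee2}. The key move is to use \eqref{mm3}-type identities for the \emph{dual} solution $(\bpsi,\xi)$: applying the computation that produced the error equations (in particular \eqref{d4} and \eqref{mm2}) with $\bu$ replaced by $\bpsi$, $p$ replaced by $\xi$, and test function $\bv = \be_h$, one obtains
\begin{equation*}
\|\be_0\|^2 = (\nu^{-1}\,\text{-free analog})\ (\nabla_w Q_h\bpsi,\ \nabla_w\be_h) - (\mathcal{Q}_h\xi,\ \nabla_w\cdot\be_h) - \ell_2(\bpsi,\be_h) - \ell_3(\xi,\be_h),
\end{equation*}
where $\ell_3$ collects exactly the boundary term $\langle \mathcal{Q}_h\xi - \xi,\ (\be_0-\be_b)\cdot\bn\rangle_{\partial\T_h}$ that arises because $\xi$ is not piecewise polynomial (this is the analog of $\ell_2$ for the pressure variable). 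Here one uses $(\be_0,\nabla\cdot\Pi_h\be_h)$-style manipulations together with the commuting properties \eqref{key1}--\eqref{key2} and the fact that $Q_b\bpsi$ is single-valued so the inter-element terms in $\langle\nabla\bpsi\cdot\bn,\be_b\rangle$ cancel.

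Next I would symmetrize: by \eqref{key1} we have $\nabla_w Q_h\bpsi = \Q_h\nabla\bpsi$, so $(\nabla_w Q_h\bpsi,\nabla_w\be_h) = (\nabla_w\be_h,\nabla_w Q_h\bpsi)$, and I can feed $\bv = Q_h\bpsi$ into the primal error equation \eqref{ee1}, which gives $(\nu\nabla_w\be_h,\nabla_w Q_h\bpsi) - (\varepsilon_h,\nabla_w\cdot Q_h\bpsi) = \ell_1(\bu,Q_h\bpsi) + \ell_2(\bu,Q_h\bpsi)$. By \eqref{key2}, $\nabla_w\cdot Q_h\bpsi = \mathcal{Q}_h(\nabla\cdot\bpsi) = 0$ from \eqref{dual-c}, so the $\varepsilon_h$ term drops; dividing by $\nu$ and substituting into the expression from the previous paragraph yields precisely \eqref{wd1}. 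I also need the complementary fact that $(\mathcal{Q}_h\xi,\nabla_w\cdot\be_h) = 0$, which follows from \eqref{ee2} with $w = \mathcal{Q}_h\xi \in W_h$ (after checking the mean-zero condition, which holds since $\xi\in H^1(\Omega)\cap L^2_0$ up to the usual normalization).

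The main obstacle — really a bookkeeping obstacle rather than a conceptual one — is getting the pressure/divergence terms to cancel cleanly and confirming that the only leftover boundary contribution is exactly $\ell_3$ and not some additional term. Concretely, one must verify that in the integration-by-parts chain for $(\nabla\xi,\Pi_h\be_h)$ (mirroring \eqref{mm2}) the substitution of $\mathcal{Q}_h\xi$ for $\xi$ is legitimate against $\nabla\cdot\Pi_h\be_h \in P_k(T)$, and that the $H(\operatorname{div})$-conformity of $\Pi_h\be_h$ kills the global boundary jump term while the non-conformity of $\be_h$ itself (the gap $\be_0 - \be_b$) produces $\ell_3$. Care is also needed with the $\nu$ factors: the dual problem is stated without $\nu$, so one tracks a factor $\nu^{-1}$ through the symmetrization step, which is consistent with the pressure-robust philosophy. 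Once the algebra is arranged, \eqref{wd1} follows, and the subsequent $L^2$ estimate will come from bounding $\ell_1, \ell_2$ via the earlier lemma and $\ell_3$ by a trace inequality plus the regularity bound \eqref{reg}.
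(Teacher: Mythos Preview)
Your approach is essentially the paper's: test \eqref{dual-m} against $\be_0$, apply the analogue of \eqref{d4} to $-(\Delta\bpsi,\be_0)$, integrate $(\nabla\xi,\be_0)$ by parts to obtain $-(\mathcal Q_h\xi,\nabla_w\cdot\be_h)-\ell_3$, kill both divergence terms via \eqref{ee2} and \eqref{key2}/\eqref{dual-c}, and then substitute $\bv=Q_h\bpsi$ into \eqref{ee1}.

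One clarification: in your obstacle paragraph you refer to ``the integration-by-parts chain for $(\nabla\xi,\Pi_h\be_h)$'' and to the $H(\operatorname{div})$-conformity of $\Pi_h\be_h$, but neither the paper nor your own first paragraph uses $\Pi_h$ in the dual pressure computation --- the test function is $\be_0$, not $\Pi_h\be_h$. The replacement of $\xi$ by $\mathcal Q_h\xi$ is legitimate simply because $\nabla\cdot\be_0\in P_{k-1}(T)\subset P_k(T)$; then one inserts $\be_b$ using $\sum_T\langle\xi,\be_b\cdot\bn\rangle_{\partial T}=0$, integrates by parts back, and invokes the definition \eqref{wd} of $\nabla_w\cdot\be_h$ to arrive at $-(\mathcal Q_h\xi,\nabla_w\cdot\be_h)-\ell_3(\xi,\be_h)$. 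The reconstruction $\Pi_h$ enters only through $\ell_1(\bu,Q_h\bpsi)$ on the primal side. Your remark about the $\nu$ factor is well taken; the paper quietly drops $\nu$ in \eqref{d9} and in the subsequent bounds, which is harmless for the lemma as stated but worth tracking if you care about the constant.
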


\begin{proof}
Testing (\ref{dual-m}) by
$\be_0$ with $\be_h=\{\be_0,\be_b\}\in V_h^0$ gives
\begin{equation}\label{d1}
-(\Delta\bpsi,\;\be_0)+(\nabla \xi,\ \be_0)=(\be_0,\; \be_0).
\end{equation}
Letting $\bu=\bpsi$ and $\bv=\be_h$ in (\ref{d4}), we derive
\begin{eqnarray}
-(\Delta\bpsi,\;\be_0)&=&( \nabla_w Q_h\bpsi,\; \nabla_w \be_h)-\ell_2(\bpsi,\be_h).\label{dd1}
\end{eqnarray}
Using integration by parts and the fact $\l \xi, \bv_b\cdot\bn\r_{\partial\T_h}=0$,  we have
\begin{eqnarray*}
(\nabla \xi,\ \be_0)&=& -(\xi,\nabla\cdot\be_0)_{\T_h}+\l \xi, \be_0\cdot\bn\r_{\partial\T_h}\\
&=& -(\mathcal{Q}_h\xi,\nabla\cdot\be_0)_{\T_h}+\l \xi, (\be_0-\be_b)\cdot\bn\r_{\partial\T_h}\\
&=&(\nabla\mathcal{Q}_h\xi,\be_0)_{\T_h}-\l \mathcal{Q}_h\xi, \be_0\cdot\bn\r_{\partial\T_h}+\l \xi, (\be_0-\be_b)\cdot\bn\r_{\partial\T_h}\\
&=&-(\mathcal{Q}_h\xi,\nabla_w\cdot\be_h)-\l \mathcal{Q}_h\xi, (\be_0-\be_b)\cdot\bn\r_{\partial\T_h}+\l \xi, (\be_0-\be_b)\cdot\bn\r_{\partial\T_h}\\
&=&-(\mathcal{Q}_h\xi,\nabla_w\cdot\be_h)-\ell_3(\xi,\be_h),
\end{eqnarray*}
which yields
\begin{equation}\label{mm222}
(\nabla \xi,\ \be_0)=-(\mathcal{Q}_h\xi,\nabla_w\cdot\be_h)-\ell_3(\xi,\be_h).
\end{equation}
By (\ref{ee2}), we have
\[
(\mathcal{Q}_h\xi,\;\nabla_w\cdot\be_h)=0.
\]
Then the equation above implies
\begin{equation}\label{mm22}
(\nabla \xi,\ \be_0)=-\ell_3(\xi,\be_h).
\end{equation}
Combining (\ref{dd1}) and (\ref{mm22}) with (\ref{d1}) yields 
\begin{equation}\label{ddd1}
(\nabla_w Q_h\bpsi,\; \nabla_w\be_h)=(\be_0,\be_0)+\ell_2(\bpsi,\be_h)+\ell_3(\xi,\be_h).
\end{equation}
Testing equation (\ref{dual-c}) by $w\in W_h$ and using (\ref{key2}) give
\begin{equation}\label{d6}
(\nabla\cdot\bpsi,\ w)=(\mathcal{Q}_h\nabla\cdot\bpsi,\ w)=(\nabla_w\cdot Q_h\bpsi,\ w)=0.
\end{equation}
The equation (\ref{ee1}) implies
\begin{eqnarray}
(\nabla_w \be_h,\; \nabla_w Q_h\bpsi)-(\epsilon_h,\;\nabla_w\cdot Q_h\bpsi)&=&\ell_1(\bu,Q_h\bpsi)+\ell_2(\bu,Q_h\bpsi).\label{d9}
\end{eqnarray}
Using (\ref{d6}), we have $(\epsilon_h,\;\nabla_w\cdot Q_h\bpsi)=0$. Then (\ref{d9}) becomes
\begin{eqnarray}
(\nabla_w \be_h,\; \nabla_w Q_h\bpsi)&=&\ell_1(\bu,Q_h\bpsi)+\ell_2(\bu,Q_h\bpsi).\label{d10}
\end{eqnarray}
Combining (\ref{ddd1}) and (\ref{d10}), we derive
\[
\|\be_0\|^2=\ell_1(\bu,Q_h\bpsi)+\ell_2(\bu,Q_h\bpsi)-\ell_2(\bpsi,\be_h)-\ell_3(\xi,\be_h)
\]
and prove the lemma.
\end{proof}

\medskip

\begin{theorem}\label{them:l2}
Let  $(\bu_h,p_h)\in V_h^0\times W_h$ be the solution of
(\ref{wg1})-(\ref{wg2}). Assume that (\ref{reg}) holds true.  Then, we have
\begin{equation}\label{l2err}
\|Q_0\bu-\bu_0\|\le Ch^{k+2}|\bu|_{k+2}.
\end{equation}
\end{theorem}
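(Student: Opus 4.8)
\textbf{Proof plan for Theorem \ref{them:l2}.}
The plan is to start from the identity \eqref{wd1} in the preceding lemma,
\[
\|\be_0\|^2=\ell_1(\bu,Q_h\bpsi)+\ell_2(\bu,Q_h\bpsi)-\ell_2(\bpsi,\be_h)-\ell_3(\xi,\be_h),
\]
and bound each of the four terms on the right by $Ch^{k+2}|\bu|_{k+2}\|\be_0\|$, after which dividing by $\|\be_0\|$ gives \eqref{l2err}. The elliptic regularity estimate \eqref{reg}, namely $\|\bpsi\|_2+\|\xi\|_1\le C\|\be_0\|$, will be used to convert every occurrence of a $\bpsi$- or $\xi$-seminorm into a factor of $\|\be_0\|$; this is the mechanism that yields one extra power of $h$ compared with the energy estimate of Theorem \ref{h1-bd}.

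First I would treat the two "dual-test" terms $\ell_2(\bpsi,\be_h)$ and $\ell_3(\xi,\be_h)$, which are the easy ones. For $\ell_2(\bpsi,\be_h)=\l\nu(\nabla\bpsi-\Q_h\nabla\bpsi)\cdot\bn,\be_0-\be_b\r_{\partial\T_h}$, apply Cauchy--Schwarz edgewise, the trace inequality \eqref{trace}, and the approximation property of $\Q_h$ (which costs one power of $h$ since $\bpsi\in\bH^2$), to get $|\ell_2(\bpsi,\be_h)|\le C\nu h\|\bpsi\|_2\,\|\be_h\|_{1,h}$; then use \eqref{happy} and the energy bound $\3bar\be_h\3bar\le Ch^{k+1}|\bu|_{k+2}$ from \eqref{errv}, together with \eqref{reg}, to reach $Ch^{k+2}|\bu|_{k+2}\|\be_0\|$. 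The term $\ell_3(\xi,\be_h)=\l\mathcal{Q}_h\xi-\xi,(\be_0-\be_b)\cdot\bn\r_{\partial\T_h}$ is handled the same way: $\|\mathcal{Q}_h\xi-\xi\|_{\partial\T_h}$ carries a factor $h^{1/2}\|\xi\|_1$ via trace/approximation, and pairing with $h^{-1/2}\|\be_0-\be_b\|_{\partial\T_h}\le C\|\be_h\|_{1,h}$ again produces $Ch^{k+2}|\bu|_{k+2}\|\be_0\|$.

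The more delicate terms are $\ell_1(\bu,Q_h\bpsi)=(\nu\Delta\bu,(Q_h\bpsi)_0-\Pi_hQ_h\bpsi)$ and $\ell_2(\bu,Q_h\bpsi)=\l\nu(\nabla\bu-\Q_h\nabla\bu)\cdot\bn,(Q_h\bpsi)_0-(Q_h\bpsi)_b\r_{\partial\T_h}$, since here we must extract $h^{k+1}$ from the $\bu$-factor \emph{and} an extra $h$ from the $\bpsi$-factor. For $\ell_2(\bu,Q_h\bpsi)$ I would first insert $\nabla\bpsi$: write $(Q_0\bpsi)-(Q_b\bpsi)=(Q_0\bpsi-\bpsi)-(Q_b\bpsi-\bpsi)$ on each edge, use the trace inequality and the standard estimate $\|Q_0\bpsi-\bpsi\|_T+h_T^{1/2}\|Q_0\bpsi-\bpsi\|_{\partial T}\le Ch_T\|\bpsi\|_{2,T}$ (and the analogue for $Q_b$) to get $h^{-1/2}\|(Q_0\bpsi)-(Q_b\bpsi)\|_{\partial T}\le Ch_T^{1/2}\|\bpsi\|_{2,T}$, and combine with $h_T^{1/2}\|\nabla\bu-\Q_h\nabla\bu\|_{\partial T}\le Ch_T^{k+1}|\bu|_{k+2,T}$; summing and using \eqref{reg} gives the bound. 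For $\ell_1(\bu,Q_h\bpsi)$ I would use \eqref{zhang2} applied to $Q_h\bpsi$, i.e. $\|\Pi_hQ_h\bpsi-(Q_h\bpsi)_0\|\le C(\sum h_T\|(Q_0\bpsi)-(Q_b\bpsi)\|_{\partial T}^2)^{1/2}\le Ch^2\|\bpsi\|_2$ by the edge estimate just mentioned, together with the orthogonality $(\mathcal{P}_{k-1}\Delta\bu,(Q_h\bpsi)_0-\Pi_hQ_h\bpsi)=0$ coming from \eqref{zhang0}, so that $\ell_1=(\nu(\Delta\bu-\mathcal{P}_{k-1}\Delta\bu),(Q_h\bpsi)_0-\Pi_hQ_h\bpsi)$, and Cauchy--Schwarz yields $C\nu h^{k+1}|\bu|_{k+2}\cdot h^2\|\bpsi\|_2$ — in fact this term is $O(h^{k+3})$, more than enough.

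The main obstacle I anticipate is getting the powers of $h$ to line up correctly in $\ell_1$ and $\ell_2$ with $Q_h\bpsi$ as the argument: one must carefully exploit the projection orthogonality \eqref{zhang0} (to replace $\Delta\bu$ by $\Delta\bu-\mathcal{P}_{k-1}\Delta\bu$) and the fact that $(Q_0\bpsi)-(Q_b\bpsi)$ is small because both equal $\bpsi$ up to $O(h)$ in the relevant norms — it is \emph{not} enough to treat $\3bar Q_h\bpsi\3bar$ by the trivial bound $C\|\bpsi\|_1$, which would only recover the energy rate. Once those two refined estimates are in place the rest is a routine assembly: collect the four bounds, each of the form $Ch^{k+2}|\bu|_{k+2}\|\be_0\|$ or better, cancel one factor of $\|\be_0\|$, and conclude $\|Q_0\bu-\bu_0\|=\|\be_0\|\le Ch^{k+2}|\bu|_{k+2}$.
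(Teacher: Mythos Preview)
Your proposal is correct and follows essentially the same route as the paper: start from \eqref{wd1}, bound the four terms separately using the projection orthogonality \eqref{zhang0}, the stability bound \eqref{zhang2}, the trace inequality, the energy estimate \eqref{errv}, and finally the regularity \eqref{reg}. One small miscount: in the $\ell_1(\bu,Q_h\bpsi)$ term you write that $\Delta\bu-\mathcal P_{k-1}\Delta\bu$ contributes $h^{k+1}$, but since $\mathcal P_{k-1}$ is the projection onto $P_{k-1}$ and $\Delta\bu\in H^k$, this factor is only $Ch^{k}|\bu|_{k+2}$; combined with the $h^2$ from $\|\Pi_hQ_h\bpsi-Q_0\bpsi\|$ you still get exactly $Ch^{k+2}|\bu|_{k+2}|\bpsi|_2$, which is what the paper obtains and what is needed, so the slip is harmless.
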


\begin{proof}
The equation (\ref{wd1}) yields
\begin{eqnarray}
\|\be_0\|^2=\ell_1(\bu,Q_h\bpsi)+\ell_2(\bu,Q_h\bpsi)-\ell_2(\bpsi,\be_h)-\ell_3(\xi,\be_h).\label{d7}
\end{eqnarray}
The four terms on the right hand side of (\ref{d7}) are bounded  next.
The estimates (\ref{zhang0}), (\ref{zhang2}) and (\ref{trace}) imply
\begin{eqnarray}
\ell_1(\bu,Q_h\bpsi)&=&(\Delta\bu, Q_0\bpsi-\Pi_hQ_h\bpsi)\label{e0}\nonumber\\
&=&(\Delta\bu-\mathcal{P}_{k-1}\Delta\bu, Q_0\bpsi-\Pi_hQ_h\bpsi)_{\T_h}\nonumber\\
&\le&Ch^k|\bu|_{k+2}(\sum_{T\in\T_h}h_T\|Q_0\bpsi-Q_b\bpsi\|_{\pT}^2)^{1/2}\nonumber\\
&\le&Ch^{k+2}|\bu|_{k+2}|\bpsi|_2.
\end{eqnarray}
Using the Cauchy-Schwarz inequality, the trace inequality (\ref{trace}) and the definition of $\Q_h$
we obtain
\begin{eqnarray}
|\ell_2(\bu,Q_h\bpsi)|&\le&\left| \langle (\nabla \bu-\Q_h\nabla
\bu)\cdot\bn,\;
Q_0\bpsi-Q_b\bpsi\rangle_{\pT_h} \right|\nonumber\\
&\le& \left(\sum_{T\in\T_h}\|\nabla \bu-\Q_h\nabla \bu\|^2_\pT\right)^{1/2}
\left(\sum_{T\in\T_h}\|Q_0\bpsi-\bpsi\|^2_\pT\right)^{1/2}\nonumber \\
&\le&  Ch^{k+2}|\bu|_{k+2}|\bpsi|_2.\label{e1}
\end{eqnarray}
Using the Cauchy-Schwarz inequality, the trace inequality (\ref{happy}) and (\ref{errv}), we obtain
\begin{eqnarray}
|\ell_2(\bpsi,\be_h)|&\le&\left| \langle (\nabla \bpsi-\Q_h\nabla
\bpsi)\cdot\bn,\;
\be_0-\be_b\rangle_{\pT_h} \right|\nonumber\\
&\le& \left(\sum_{T\in\T_h}h_T\|\nabla \bpsi-\Q_h\nabla \bpsi\|^2_\pT\right)^{1/2}
\left(\sum_{T\in\T_h}h_T^{-1}\|\be_0-\be_b\|^2_\pT\right)^{1/2}\nonumber \\
&\le&  Ch|\bpsi|_2\3bar\be_h\3bar\nonumber\\
&\le&Ch^{k+2}|\bu|_{k+2}|\bpsi|_2.\label{e3}
\end{eqnarray}
Similarly, we have
\begin{eqnarray}
|\ell_3(\xi, \be_h)|&\le&\left| \langle \mathcal{Q}_h\xi-\xi,\;
(\be_0-\be_b)\cdot\bn\rangle_{\pT_h} \right|\nonumber\\
&\le& \left(\sum_{T\in\T_h}h_T\|\mathcal{Q}_h\xi-\xi\;\|^2_\pT\right)^{1/2}
\left(\sum_{T\in\T_h}h_T^{-1}\|\be_0-\be_b\|^2_\pT\right)^{1/2}\nonumber \\
&\le&  Ch^{k+2}|\bu|_{k+2}|\xi|_1.\label{e4}
\end{eqnarray}
Combining all the estimates  above with (\ref{d7}) yields
$$
\|\be_0\|^2 \leq C h^{k+2}|\bu|_{k+2}(\|\bpsi\|_2+\|\xi\|_1).
$$
The estimate (\ref{l2err}) follows from the above inequality and
the regularity assumption (\ref{reg}). We have completed the proof.
\end{proof}

\section{Implementation of $\Pi_h$ Operator}\label{Section:Implementation}
This section contributes to the details in the numerical implementation. 
In the following, for notation simplification, we shall take $k = 0$ to illustrate the derivation of velocity reconstruction into $\Lambda_0(T)$ space. In the reference triangle $\hat{T} = (0,0;1,0;0,1)$, let the basis functions of RT$_0(\hat{T})$ be:
\begin{eqnarray}
\hat{\Phi}_1(\hat{x},\hat{y}) = \sqrt{2}\begin{bmatrix}
\hat{x}\\ \hat{y}
\end{bmatrix},\
\hat{\Phi}_2(\hat{x},\hat{y}) = \begin{bmatrix}
\hat{x}-1\\ \hat{y}
\end{bmatrix},\
\hat{\Phi}_3(\hat{x},\hat{y}) = \begin{bmatrix}
\hat{x}\\ \hat{y}-1
\end{bmatrix}
\end{eqnarray}
Then the Piola transformation will be employed to map functions $\hat{\Phi}_j\in\mbox{RT}_0(\hat{T})$ to RT$_0(T_j)$ on the physical element  defined by
\begin{eqnarray}
\hat{\bf q}\rightarrow {\bf q}(x,y) = \frac{J_T}{|J_T|}\hat{\bf q}(\hat{x},\hat{y}).
\end{eqnarray}
Here we have Jacobi matrix and the determinant defined as below,
\begin{eqnarray*}
J_T = \begin{bmatrix}
\dfrac{\partial x}{\partial\hat{x}} & \dfrac{\partial x}{\partial\hat{y}}\\[1em]
\dfrac{\partial y}{\partial\hat{x}} & \dfrac{\partial y}{\partial\hat{y}}
\end{bmatrix}\mbox{ and } |J_T| = \text{det}(J_T).
\end{eqnarray*}

\begin{figure}[H]
\centering
\begin{tabular}{cc}
\includegraphics[width=0.45\textwidth]{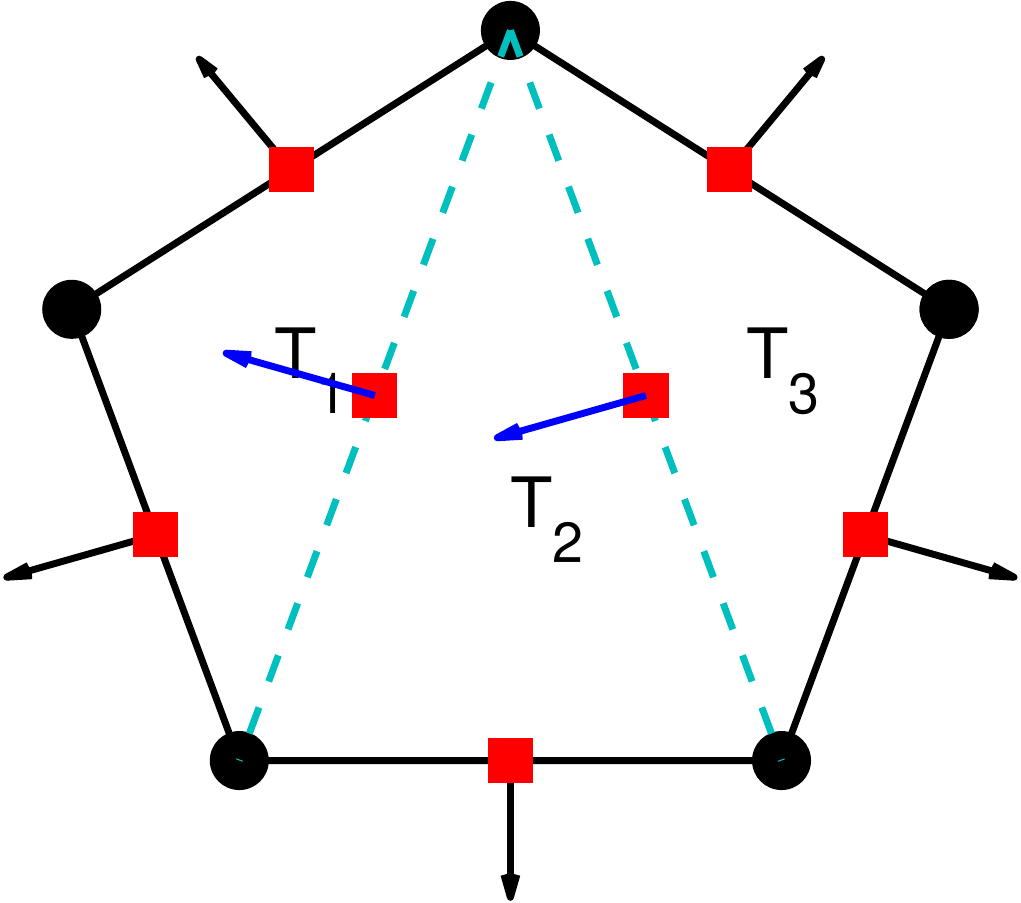}
&\includegraphics[width=0.5\textwidth]{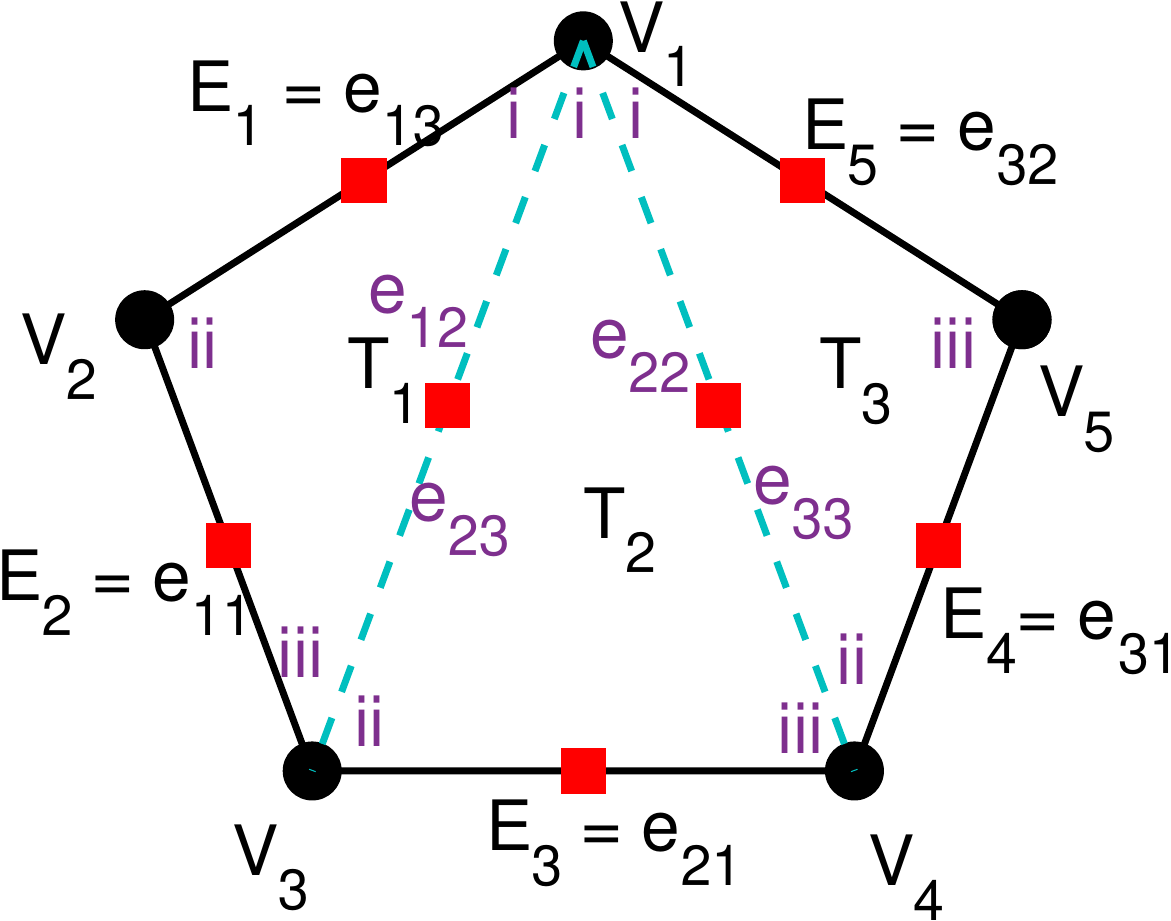}
\end{tabular}
\caption{Illustration of a possible partition of pentagon cell $T$ into three subtriangles.}\label{fig:PenFig}
\end{figure}

We shall consider a pentagon cell $T$ (shown in Figure~\ref{fig:PenFig}) for deriving the corresponding velocity reconstruction. We first perform the sub-triangulation for this cell $T$ into three small triangles $T_j$ ($j=1,\cdots,3$). The local indices for sub-triangles are denoted by Roman numbers (i,ii,iii). There are two interior edges in the cell $T$ and we shall use the notation $e_{ij}$ to denote the sub-triangle's indices $i$ and $j$.
The vertex indices are denoted as V$_j$ ($j$=1,$\cdots$,5); The boundaries for cell $T$ are denoted by E$_j$ ($j$=$1,\cdots,5$). All the labels and indexing are demonstrated in Figure~\ref{fig:PenFig}. For the piecewise polynomial basis, we shall assume the degree of freedom on each sub-triangle is three corresponding to the coefficients to be determined in front of the piecewise RT$_0(T_i)$ (i=1,2,3) basis. Thus in this particular setting, we shall derive a matrix $\mathbb{M}$ with size $9\times 9$. 
By the condition (\ref{eq:cond-1}), (\ref{eq:cond-4}), and (\ref{eq:cond-5}), we assume $\Pi_h\bv = \sum_{i=1}^3\left(\sum_{j=1}^3 c_{3(i-1)+j}\Phi_{j}^{(T_i)}(x,y)\right)$, where the basis function $\Phi_j^{(T_i)}(x,y)$ denotes the local RT$_0$ basis with index $j$ in the sub-triangle $T_i$. On the edge $E_1$, we only consider the RT$_0$ basis in the subtriangle $T_1$, and thus (\ref{eq:cond-1}) implies
\begin{eqnarray*}
\int_{E_1}\Pi_h \bv\cdot\bn_{E_1}ds = \int_{e_{13}}\sum_{j=1}^3c_{j}\Phi^{T_1}_j\cdot\bn_{13} ds
 = \int_{e_{13}} c_3\Phi_3^{T_1}\cdot\bn_{13}ds = c_3,
\end{eqnarray*}
which can be written to the first row in the matrix $\mathbb{M}$. Similarly, one can derive the 2rd to 5th rows for the matrix $\mathbb{M}$. Then for the condition  (\ref{eq:cond-4}), we have
\begin{eqnarray*}
0 &=& \int_{e_{12}} \sum_{j=1}^3c_{j}\Phi_j^{(T_1)}\cdot\bn_{e_{12}}ds + \int_{e_{23}}\sum_{j=1}^3 c_{3+j}\Phi_j^{(T_2)}\cdot\bn_{e_{23}}ds\\
&=& \int_{e_{12}}\left(c_2\Phi_2^{(T_1)}\cdot\bn_{e_{12}} + c_6\Phi_3^{(T_2)}\cdot\bn_{e_{23}}\right)ds = c_2 + c_6,
\end{eqnarray*}
which gives the 6th row of the matrix $\mathbb{M}$. Similarly, the 7th row can be obtained in the same way. Next, we shall employ the condition (\ref{eq:cond-5}) for deriving the last two rows for the matrix $\mathbb{M}$. We have 
\begin{eqnarray*}
0 &=& \int_{T_1}\nabla\cdot(\Pi_h\bv|_{T_2}-\Pi_h\bv|_{T_1}) d\bx = \int_{T_1}\nabla\cdot\left(\sum_{j=1}^3 c_{3+j}\Phi_j^{(T_2)}-\sum_{j=1}^3 c_j\Phi_j^{(T_1)}\right)d\bx\\
&=&\int_{T_1}  \left(c_4\frac{2\sqrt{2}}{|J_{T_2}|} + c_5\frac{2}{|J_{T_2}|}+ c_6\frac{2}{|J_{T_2}|} - c_1\frac{2\sqrt{2}}{|J_{T_1}|}  - c_2\frac{2}{|J_{T_1}|}  - c_3\frac{2}{|J_{T_1}|}\right) d\bx\\
&=& \frac{|J_{T_1}|}{2} \left(c_4\frac{2\sqrt{2}}{|J_{T_2}|} + c_5\frac{2}{|J_{T_2}|}+ c_6\frac{2}{|J_{T_2}|} - c_1\frac{2\sqrt{2}}{|J_{T_1}|}  - c_2\frac{2}{|J_{T_1}|}  - c_3\frac{2}{|J_{T_1}|}\right) \\
&=& |{T_1}| \left(c_4\frac{\sqrt{2}}{|{T_2}|} + c_5\frac{1}{|{T_2}|}+ c_6\frac{1}{|{T_2}|} - c_1\frac{\sqrt{2}}{|{T_1}|}  - c_2\frac{1}{|{T_1}|}  - c_3\frac{1}{|{T_1}|}\right)
\end{eqnarray*}
by using $|J_{T_i}| = 2|T_j|$ and cancelling the common factor gives row 8th. Similarly, the 9th row can be obtained. Thus the matrix can be re-written as follows,
\begin{eqnarray*}
\mathbb{M} = \begin{bmatrix}
0 & 0 & 1 & 0 & 0 & 0 & 0 & 0 & 0\\
\sqrt{2} & 0 & 0 & 0 &0 & 0 & 0 & 0 & 0\\
0 & 0 & 0 & \sqrt{2} & 0 & 0 & 0 & 0 & 0\\
0 & 0 & 0 & 0 & 0 & 0 & \sqrt{2} & 0 & 0\\
0 & 0 & 0 & 0 & 0 & 0 & 0 & 1 & 0 \\
0 & 1 & 0 & 0 & 0 & 1 & 0 & 0 & 0\\
0 & 0 & 0 & 0 & 1 & 0 & 0 & 0  & 1\\
-\sqrt{2} & -1 & -1 &\dfrac{\sqrt{2}|T_1|}{|T_2|} & \dfrac{|T_1|}{|T_2|} & \dfrac{|T_1|}{|T_2|} & 0 & 0 &0\\
-\sqrt{2} & -1 & -1 & 0 & 0 & 0 &\dfrac{\sqrt{2}|T_1|}{|T_3|} & \dfrac{|T_1|}{|T_3|} & \dfrac{|T_1|}{|T_3|}
\end{bmatrix}.
\end{eqnarray*}
By employing the matrix $\mathbb{M}$ and given $\bv = \{\bv_0,\bv_b\}$, one can construct the linear system $\mathbb{M}{\bf c} = {\bf b}$ for solving vector coefficients ${\bf c} = [c_j]_{j=1}^9$. For example, assume $\bv_b|_{E_1} = \bn_{E_1}$ and $\bv_b|_{E_j} = 0$ for $j = 2,\dots,5$, and then the vector ${\bf b}$ will be calculated by: the first component $[{\bf b}]_1 = \int_{E_1}\bv_b\cdot\bn_{E_1} ds = |E_1|$ and the other components are zeros.
The calculated $\Pi_h\bv$ corresponding to $\bv_b = \bn_{E_1}$, $\bv_b = \bn_{E_2}$, and $\bv_b = \bn_{E_3}$ are plotted in Figure~\ref{fig:Basis-NewRT0}.  As noted in this plot, on the edge $E_1$, the projection $\Pi_h\bv$ with $\bv_b = \bn_{E_1}$ preserves the quantity $\int_{E_1}\bv_b\cdot\bn|_{E_1}ds$.

\begin{figure}[H]
\centering
\begin{tabular}{ccc}
\includegraphics[width=0.3\textwidth]{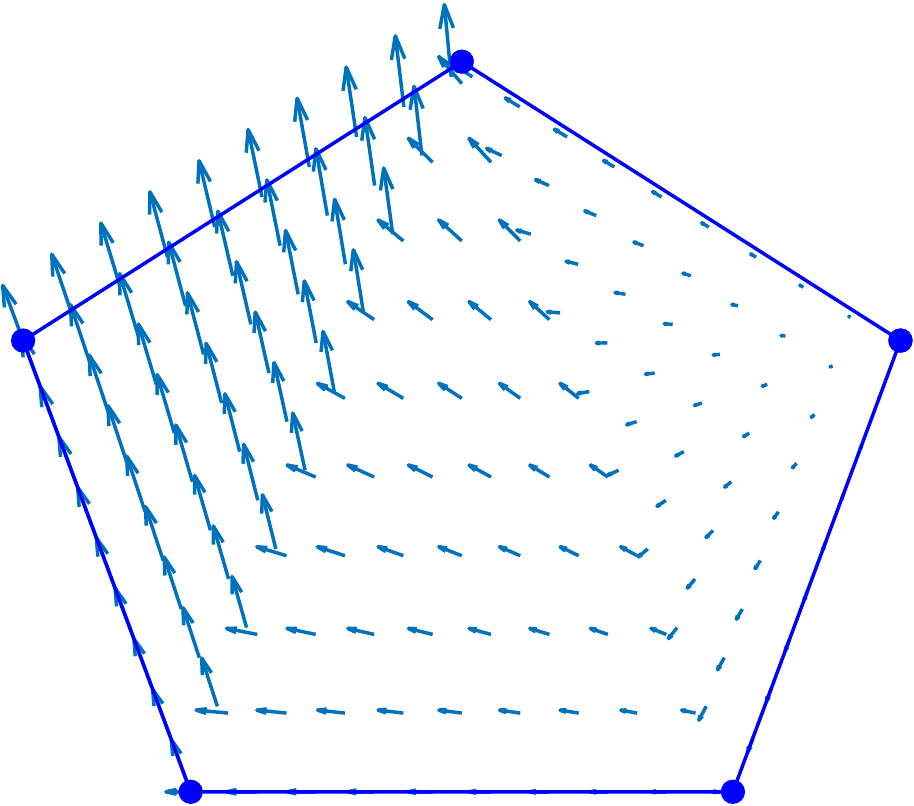}
&\includegraphics[width=0.3\textwidth]{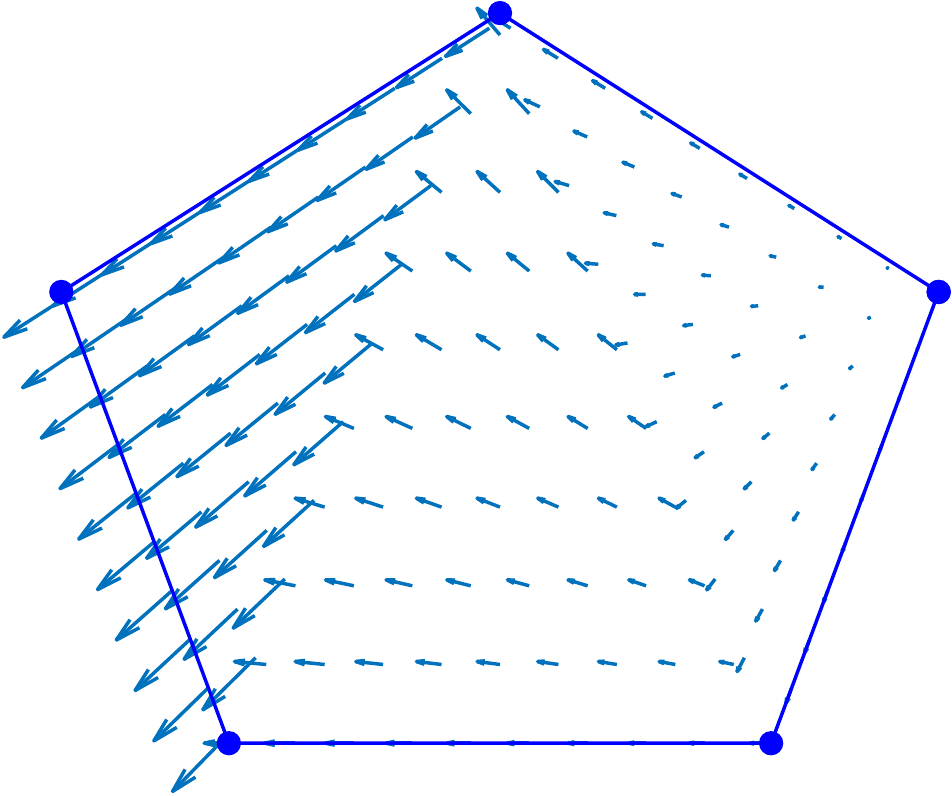}
&\includegraphics[width=0.3\textwidth]{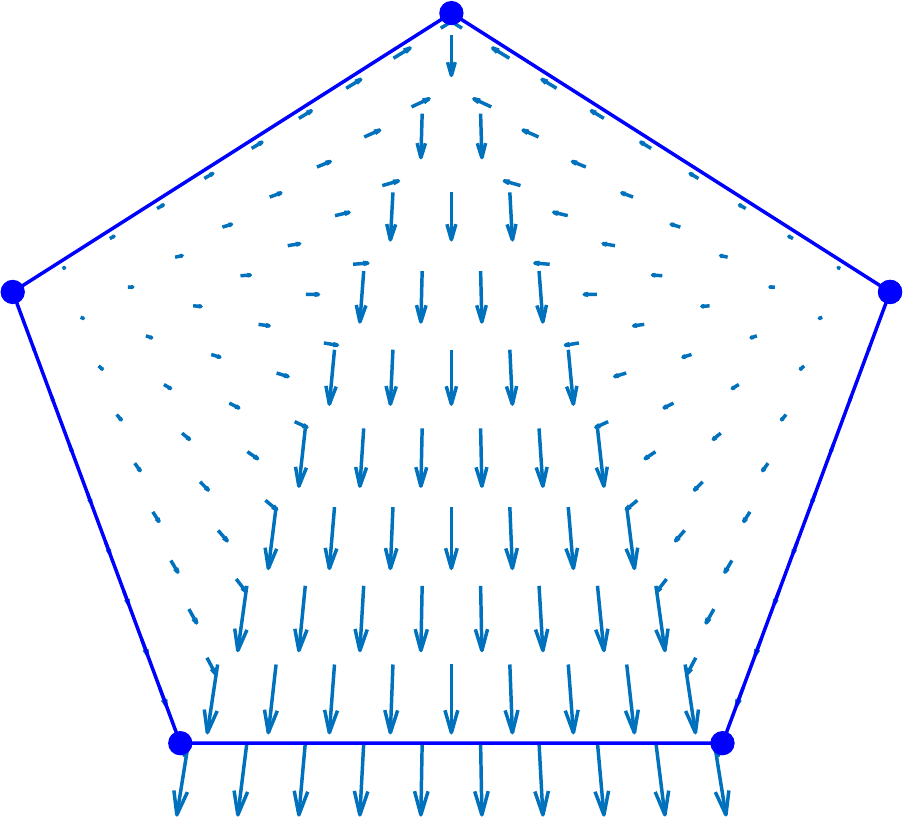}
\\
(a) & (b) & (c) 
\end{tabular}
\caption{Illustration of velocity reconstruction $\Pi_h\bv$ in the space $\Lambda_k(T)$, which is piecewise polynomial: (a) $\bv_b = \bn_{E_1}$; (b) $\bv_b=\bn_{E_2}$; (c) $\bv_b=\bn_{E_3}$; }\label{fig:Basis-NewRT0}
\end{figure}

\begin{figure}[H]
\centering
\begin{tabular}{ccc}
\includegraphics[width=0.3\textwidth]{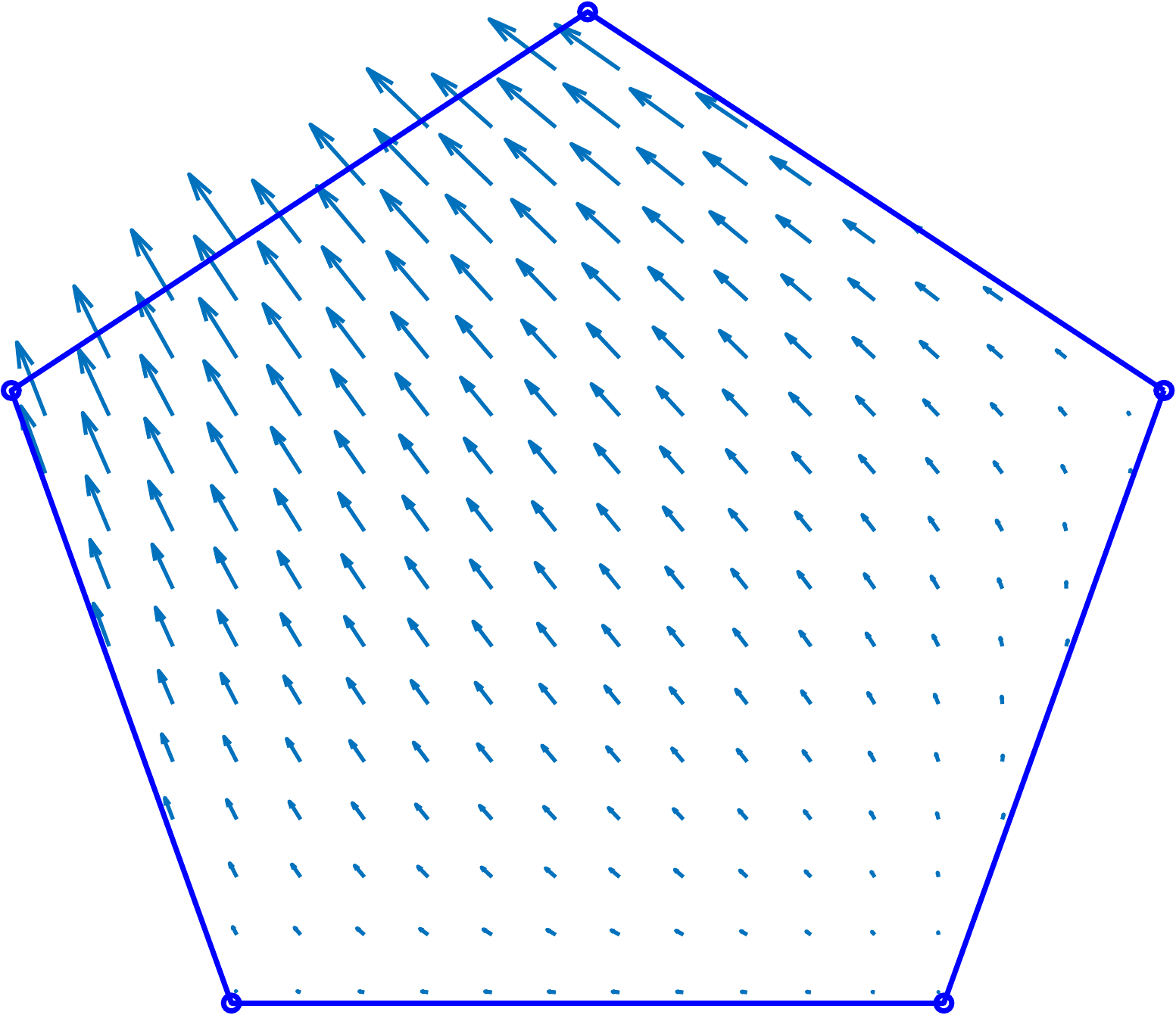}
&\includegraphics[width=0.3\textwidth]{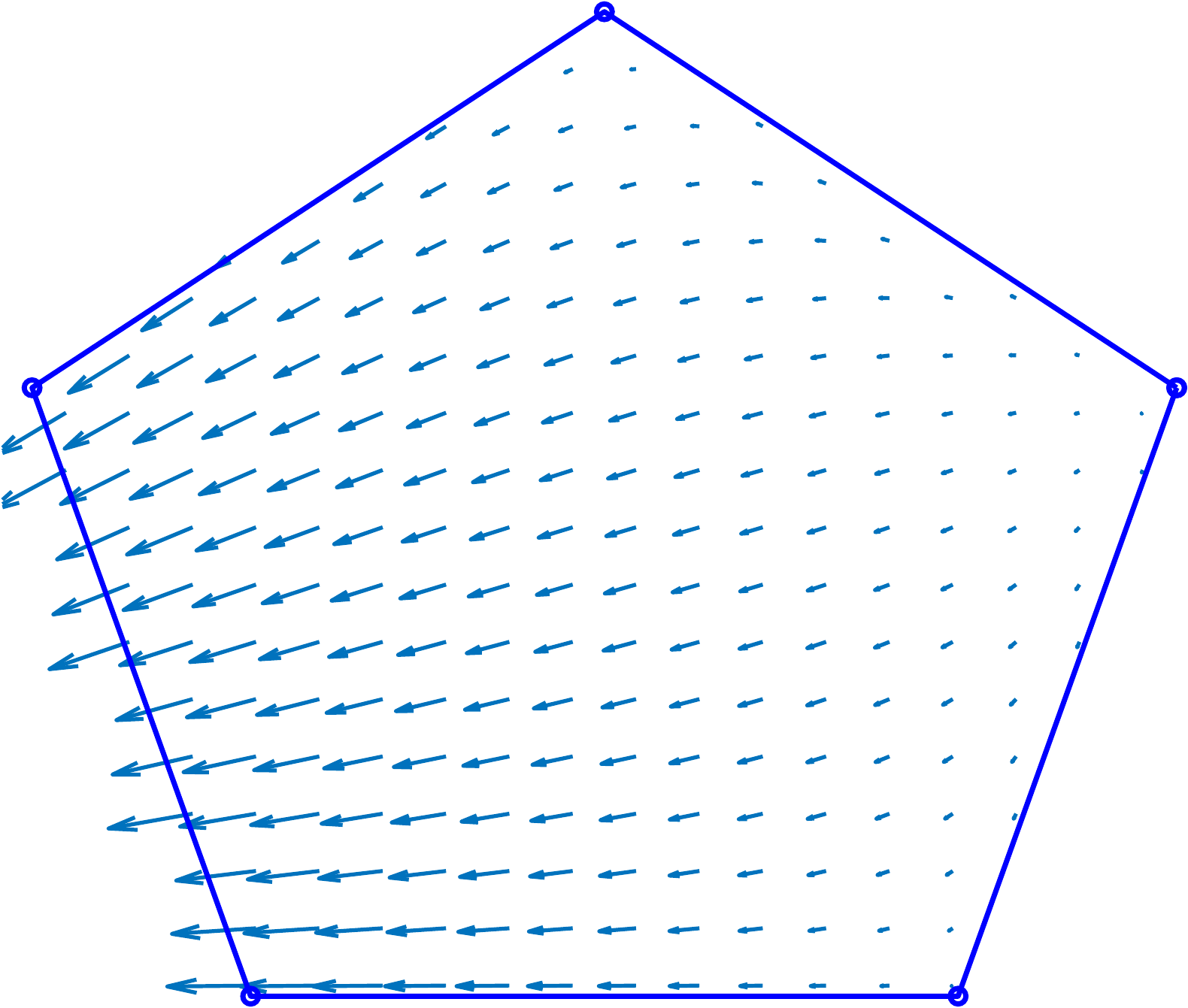}
&\includegraphics[width=0.3\textwidth]{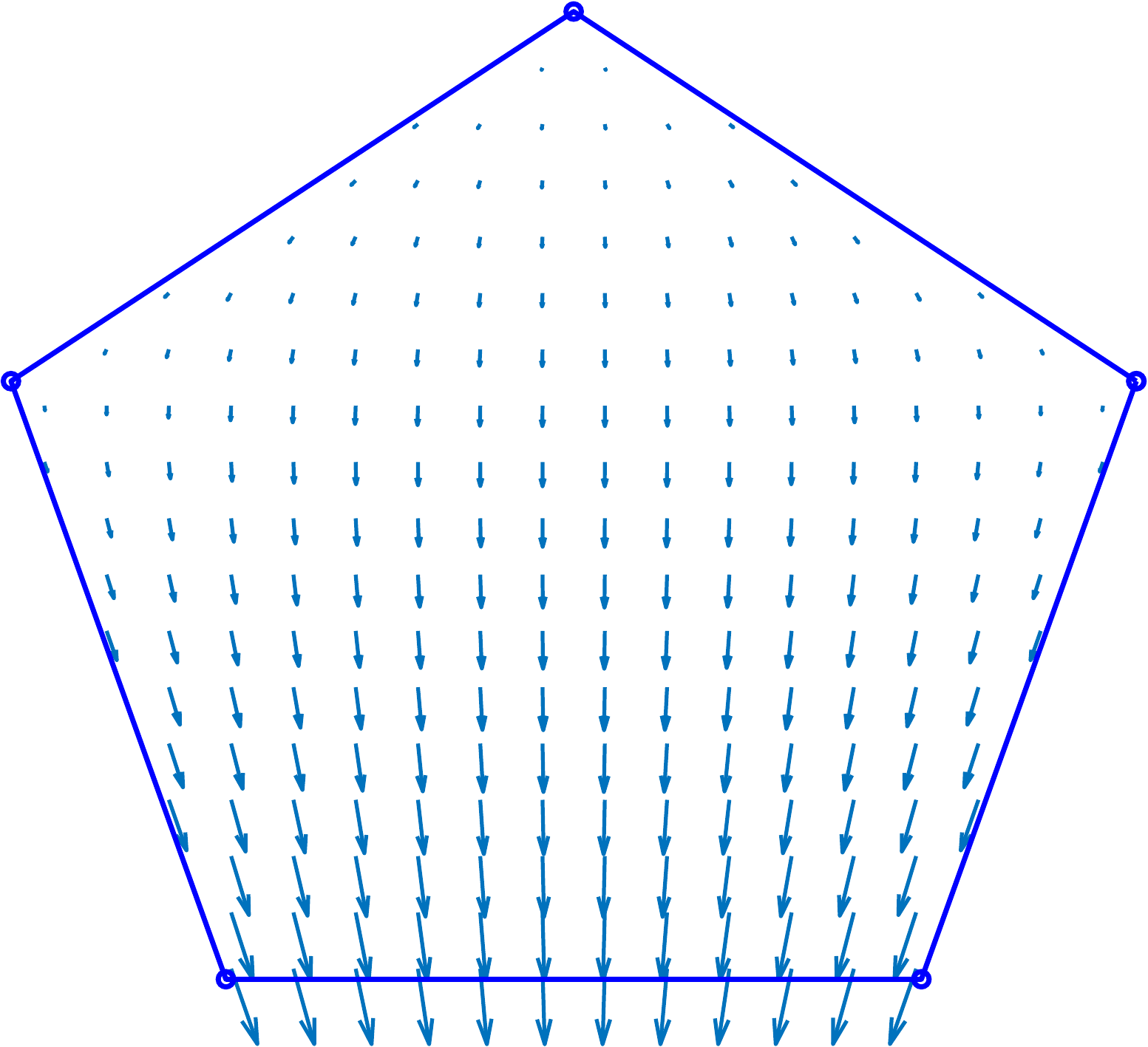}
\\
(a) & (b) & (c) 
\end{tabular}
\caption{Illustration of velocity reconstruction in the $\mathbb{CW}_0$ space in the Wachspress Coordinates \cite{mu}, which consists rational functions: (a) reconstruction of $\bn_{E_1}$; (b) reconstruction of $\bn_{E_2}$; (c) reconstruction of $\bn_{E_3}$.}\label{fig:cw0}
\end{figure}

Next, we compare the bases in $\Lambda_0(T)$ defined above with another polygonal RT$_0$-like bases in the  $\mathbb{CW}_0$ setting on the same pentagon cell, which is generated by employing the Wachspress Coordinates\cite{ChenWang}. Three velocity reconstructions based on $\mathbb{CW}_0$ space, corresponding to $\bv_b=\bn_{E_1},\bn_{E_2},\bn_{E_3}$, are plotted in Figure~\ref{fig:cw0}. As shown in Figure~\ref{fig:Basis-NewRT0}-\ref{fig:cw0}, the reconstructions in $\mathbb{CW}_0$ are one piece functions defined on the polygon $T$ consisting rational functions; the reconstructions in $\Lambda_0(T)$ are piecewise polynomials with the same normal component crossing the interior edges.

Since the stiffness matrix is the same as that of the previous stabilizer free WG scheme, we shall omit the implementation for the stiffness matrix assembling. In the body source assembling, we shall compute
\begin{eqnarray*}
({\bf f},\Pi_h\bv) &=& 
\sum_{T\in\mathcal{T}_h}\sum_{i,j} c_{3(i-1)+j}\int_{T_i} {\bf f}\Phi_j^{(T_i)}d\bx 
= \sum_{T\in\mathcal{T}_h}\left[{\bf c}\right]^\top\cdot\left[{\bf f}_{\Phi}\right],
\end{eqnarray*}
where ${\bf f}_{\Phi}$ denotes the integral of $\int_{T_i} {\bf f}\Phi_j^{(T_i)}d\bx$ for each piecewise RT$_0(T_i)$  basis ($i = 1,\cdots,\#$sub-triangles). Different as the basis in the $\mathbb{CW}_0$ space, which usually contains rational functions, here in our new numerical scheme, the integrand will be a production of ${\bf f}$ and piecewise polynomials. Thus our new constructed projection operator will not introduce extra errors when ${\bf f}$ is non-polynomial function, which is a problem for the $\mathbb{CW}_0$ space. 
Besides, the basis functions in $\mathbb{CW}_0$ are limited to the lowest order and on the convex element. In the sense of generalization to arbitrary scheme order and meshes, our new approach outperforms the scheme by employing $\mathbb{CW}_0$ spaces\cite{mu}. 

\section{Numerical Experiments}\label{Section:numerical-experiments}

This section shows several numerical examples to validate our theoretical conclusions that our proposed approach has one order super-convergence rate than the optimal rate and is also pressure-robust. We shall measure the errors by following norms for the velocity and pressure:
\begin{eqnarray*}
&&H^1\text{-Error }\3bar\be_h\3bar:=\3bar \bQ_h\bu-\bu_h\3bar=\left(\sum_{T\in\mathcal{T}_h}\int_T\left|\nabla_w(\bQ_h\bu-\bu_h)\right|^2d\bx\right)^{1/2},\\
&&L^2\text{-Error }\|\be_0\|:=\|\bQ_0\bu-\bu_0\|=\left(\sum_{T\in\mathcal{T}_h}\int_T\left|\bQ_0\bu-\bu_0\right|^2d\bx\right)^{1/2},\\
&&L^2\text{-Error }\|\epsilon_h\|:=\|\mathcal{Q}_hp-p_h\|:=\left(\sum_{T\in\mathcal{T}_h}\int_T\left|\mathcal{Q}_hp-p_h\right|^2d\bx\right)^{1/2}.
\end{eqnarray*}
As the theoretical conclusions in Theorem~\ref{h1-bd}/\ref{them:l2}, we expect the errors measured in $\3bar\be_h\3bar$, $\|\be_0\|$, and $\|\epsilon_h\|$ converge with the orders $\mathcal{O}(h^{k+1})$, $\mathcal{O}(h^{k+2})$, and $\mathcal{O}(h^{k+1})$.

\subsection{Simulations with Lowest Degree $k=0$}\label{Num-1}

Let $\Omega = (0,1)\times(0,1)$ and the exact solutions are chosen as below for testing:
\begin{eqnarray}
\bu = \begin{pmatrix}
10x^2y(x-1)^2(2y-1)(y-1)\\
-10xy^2(2x-1)(x-1)(y-1)^2
\end{pmatrix},\ p = 10x.
\end{eqnarray}
In the following, we shall test the numerical performance for the lowest order WG element $k = 0$.

\begin{itemize}
\begin{figure}[H]
\centering
\begin{tabular}{cc}
\includegraphics[width=0.45\textwidth]{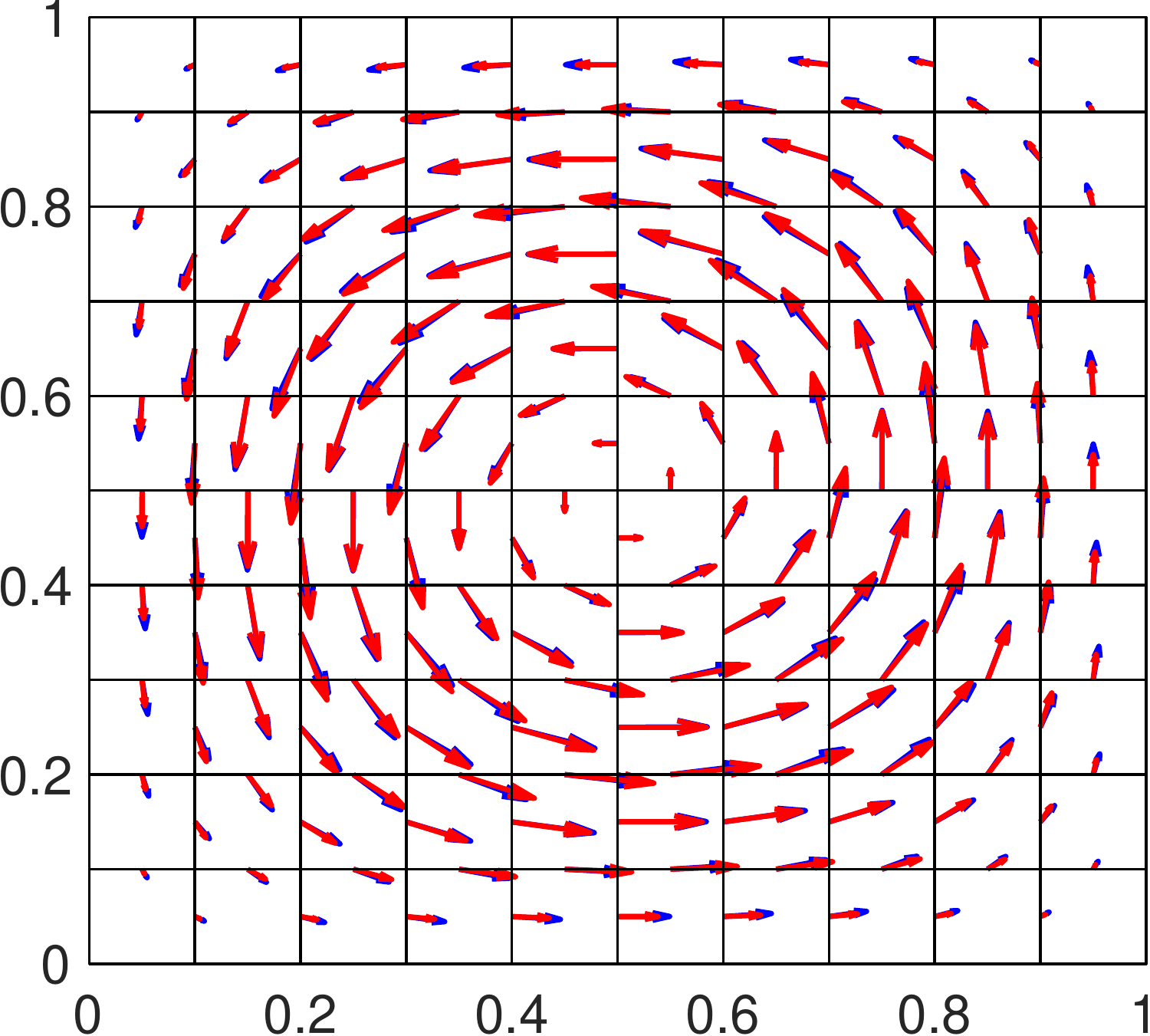}
&\includegraphics[width=0.45\textwidth]{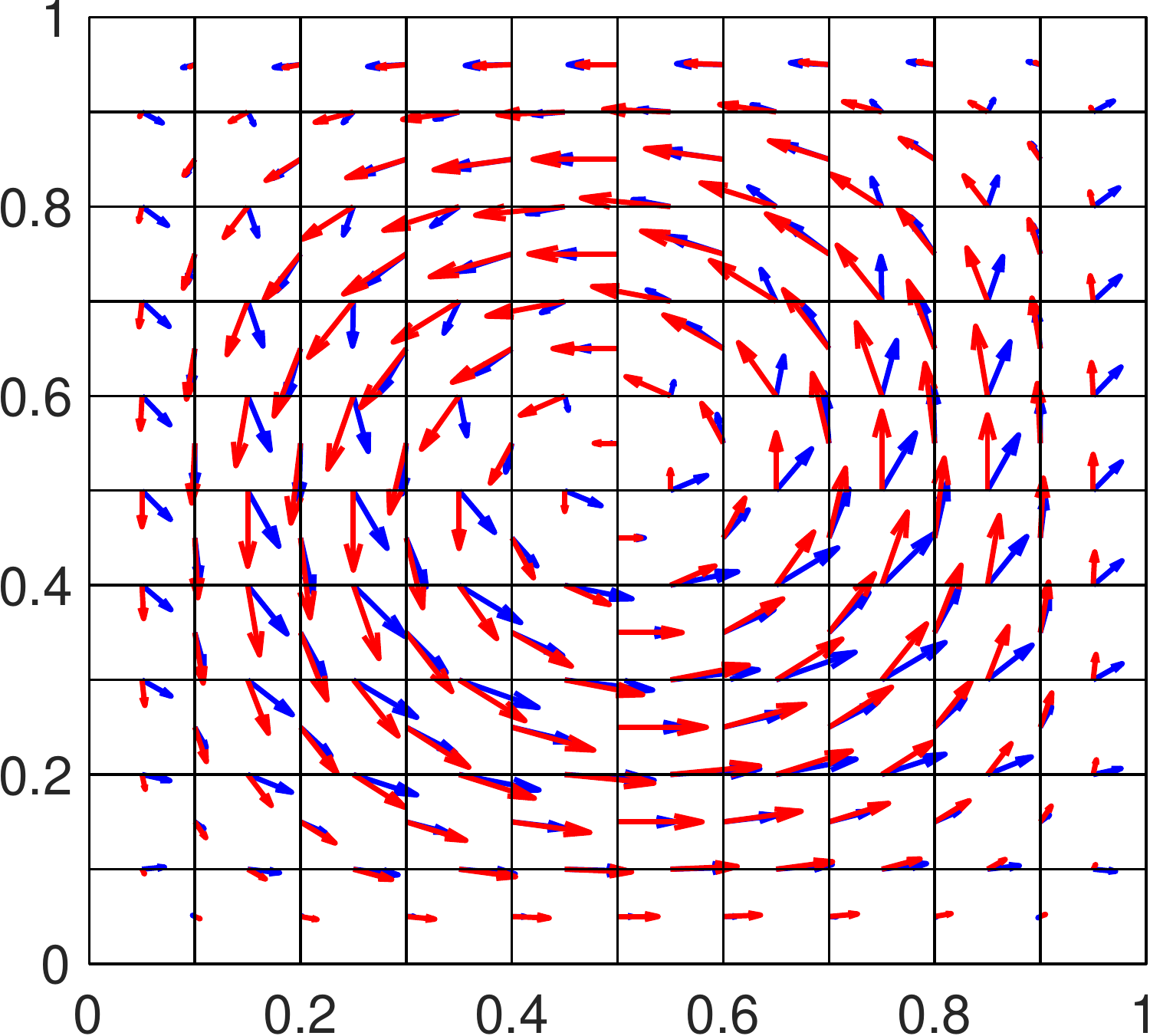}
\\
(a) & (b) 
\end{tabular}
\caption{Example~\ref{Num-1}. Plot of the vector fields of velocity. Red arrow denotes the exact solution; Blue arrow plots the WG solutions with $k=0$. (a). WG solution by pressure-robust Algorithm~\ref{Algorithm:WG1}; (b) WG solution by non-pressure-robust Algorithm~\ref{Algorithm:WG2}.} \label{fig:ex1_quiver}
\end{figure}
\item\textbf{Test case with $\nu = 1$ on rectangular grid.} In this test, we shall perform WG Algorithm~\ref{Algorithm:WG1} and WG Algorithm~\ref{Algorithm:WG2} and compare the numerical performance on the coarse mesh. Let $k = 0$, $\nu=1$,  $h = 1/10$, and the numerical solutions on the uniform rectangular mesh are illustrated in Figure~\ref{fig:ex1_quiver}. As one can see from these two figures that Algorithm~\ref{Algorithm:WG1} provides a better solution aligning with the exact velocity field. However, the right panel in this figure, produced by Algorithm~\ref{Algorithm:WG2}, plots a solution which can not preserve the vector field. It shows that as we modifying the body source assembling but remaining the same stiffness matrix, we are able to improve the simulations in velocity on the uniform rectangular meshes.

\begin{table}[H]
\caption{Example~\ref{Num-1}. Error profiles and convergence results on the uniform rectangular mesh.}\label{Tab:Ex1}
\tabcolsep=2pt
\begin{tabular}{c||cc|cc|cc||cc|cc|cc}\hline\hline
&$\3bar\be_h\3bar$& rate &$\|\be_0\|$& rate &$\|\epsilon_h\|$& rate	&$\3bar\be_h\3bar$& rate &$\|\be_0\|$& rate &$\|\epsilon_h\|$& rate	 \\ \hline
&\multicolumn{6}{c||}{$\nu$ = 1, Pressure Robust Algorithm~\ref{Algorithm:WG1} }&\multicolumn{6}{c}{$\nu$ = 1, Non-Pressure Robust Algorithm~\ref{Algorithm:WG2}	}	\\ \hline	
4	&2.42E-1	& 	&1.02E-2	&	&2.35E-2      &	
&8.92E-1  &	&7.34E-2. &	 &7.46E-1	&\\
8	&1.36E-1	&0.8	&3.55E-3	&1.5	&1.56E-2	&0.6	
&4.88E-1	&0.9	&2.28E-2	&1.7	&4.24E-1	&0.8\\
16	&7.04E-2	&1.0	&1.02E-3	&1.8	&5.62E-3	&1.5	
&2.52E-1	&1.0	&6.17E-3	&1.9	&2.27E-1	&0.9\\
32	&3.55E-2	&1.0	&2.68E-4	&1.9	&1.58E-3	&1.8	
&1.28E-1	&1.0	&1.58E-3	&2.0	&1.18E-1	&0.9\\
64	&1.78E-2	&1.0	&6.80E-5	&2.0	&4.09E-4	&2.0	
&6.40E-2	&1.0	&3.99E-4	&2.0	&6.01E-2	&1.0\\
128	&8.91E-3	&1.0	&1.71E-5	&2.0	&1.03E-4	&2.0	
&3.20E-2	&1.0	&9.99E-5	&2.0	&3.04E-2	&1.0\\ \hline
&\multicolumn{6}{c||}{$\nu$ = 1E-2, Algorithm~\ref{Algorithm:WG1}}&\multicolumn{6}{c}{$\nu$ = 1E-2, Algorithm~\ref{Algorithm:WG2}	}	\\ \hline				
4	&2.42E-1	& 	&1.02E-2	& 	&2.35E-4&		
&8.88E+1&	&7.33.      &	 &7.46E-1	&\\ 
8	&1.36E-1	&0.8	&3.55E-3	&1.5	&1.56E-4	&0.6	
&4.86E+1	&0.9	&2.28	&1.7	&4.24E-1	&0.8\\
16	&7.04E-2	&1.0	&1.02E-3	&1.8	&5.62E-5	&1.5	
&2.51E+1	&1.0	&6.15E-1	&1.9	&2.27E-1	&0.9\\
32	&3.55E-2	&1.0	&2.68E-4	&1.9	&1.58E-5	&1.8	
&1.27E+1	&1.0	&1.58E-1	&2.0	&1.18E-1	&0.9\\
64	&1.78E-2	&1.0	&6.80E-5	&2.0	&4.09E-6	&2.0	
&6.37	&1.0	&3.98E-2	&2.0	&6.01E-2	&1.0\\
128	&8.91E-3	&1.0	&1.71E-5	&2.0	&1.03E-6	&2.0	
&3.19	&1.0	&9.97E-3	&2.0	&3.04E-2	&1.0\\
\hline
&\multicolumn{6}{c||}{$\nu$ = 1E-4, Algorithm~\ref{Algorithm:WG1}}&\multicolumn{6}{c}{$\nu$ = 1E-4, Algorithm~\ref{Algorithm:WG2}}	\\ \hline					
4	&2.42E-1	& 	&1.02E-2&	&2.35E-6&		
&8.88E+3&	&7.33E+2&	 	&7.46E-1&\\	
8	&1.36E-1	&0.8	&3.55E-3	&1.5	&1.56E-6	&0.6	
&4.86E+3	&0.9	&2.28E+2	&1.7	&4.24E-1	&0.8\\
16	&7.04E-2	&1.0	&1.02E-3	&1.8	&5.62E-7	&1.5	
&2.51E+3	&1.0	&6.15E+1	&1.9	&2.27E-1	&0.9\\
32	&3.55E-2	&1.0	&2.68E-4	&1.9	&1.58E-7	&1.8
&1.27E+3	&1.0	&1.58E+1	&2.0	&1.18E-1	&0.9\\
64	&1.78E-2	&1.0	&6.80E-5	&2.0	&4.09E-8	&2.0	
&6.37E+2	&1.0	&3.98	&2.0	&6.01E-2	&1.0\\
128	&8.91E-3	&1.0	&1.71E-5	&2.0	&1.03E-8	&2.0	
&3.19E+2	&1.0	&9.97E-1	&2.0	&3.04E-2	&1.0\\ \hline\hline
\end{tabular}
\end{table}
\item{\bf Test with various values in $\nu$.} In this test, we choose $\nu = 1, $ 1E-2, and 1E-4 to validate the robustness for the proposed numerical scheme.  Table~\ref{Tab:Ex1} reports the error profiles and convergence results. The performance can be summarized as below.
	\begin{itemize}
	\item The convergence orders for velocity obtained from Algorithm~\ref{Algorithm:WG1} and Algorithm~\ref{Algorithm:WG2} agree with our theoretical conclusions. We have the errors measured in $\3bar\be_h\3bar$, $\|\be_0\|$ converging at the orders $\mathcal{O}(h)$, $\mathcal{O}(h^2)$, and $\mathcal{O}(h)$ respectively.
	
	\item As we reduce the values in $\nu$, Algorithm~\ref{Algorithm:WG1} preserve the accuracy in velocity. However, the velocity error delivered by Algorithm~\ref{Algorithm:WG2} is increasing by a factor $1/\nu$, though the convergence rate is still preserved.
	\item In the pressure simulation, the numerical solution produced by Algorithm~\ref{Algorithm:WG2} preserve the same accuracy for various viscosity and converges at the order $\mathcal{O}(h)$.  In the opposite, Algorithm~\ref{Algorithm:WG1} produces the numerical pressure with super-convergence rate $\mathcal{O}(h^2)$. Besides, as we reduce the values in $\nu$, the error in pressure is also reduced by $\nu$.
	\item Thus, the new pressure-robust scheme provides a significant better simulation for small viscosity compared to the previous scheme Algorithm~\ref{Algorithm:WG2}.
	\end{itemize}

\begin{figure}[H]
\centering
\begin{tabular}{cccc}
\includegraphics[width=0.21\textwidth,height=0.21\textwidth]{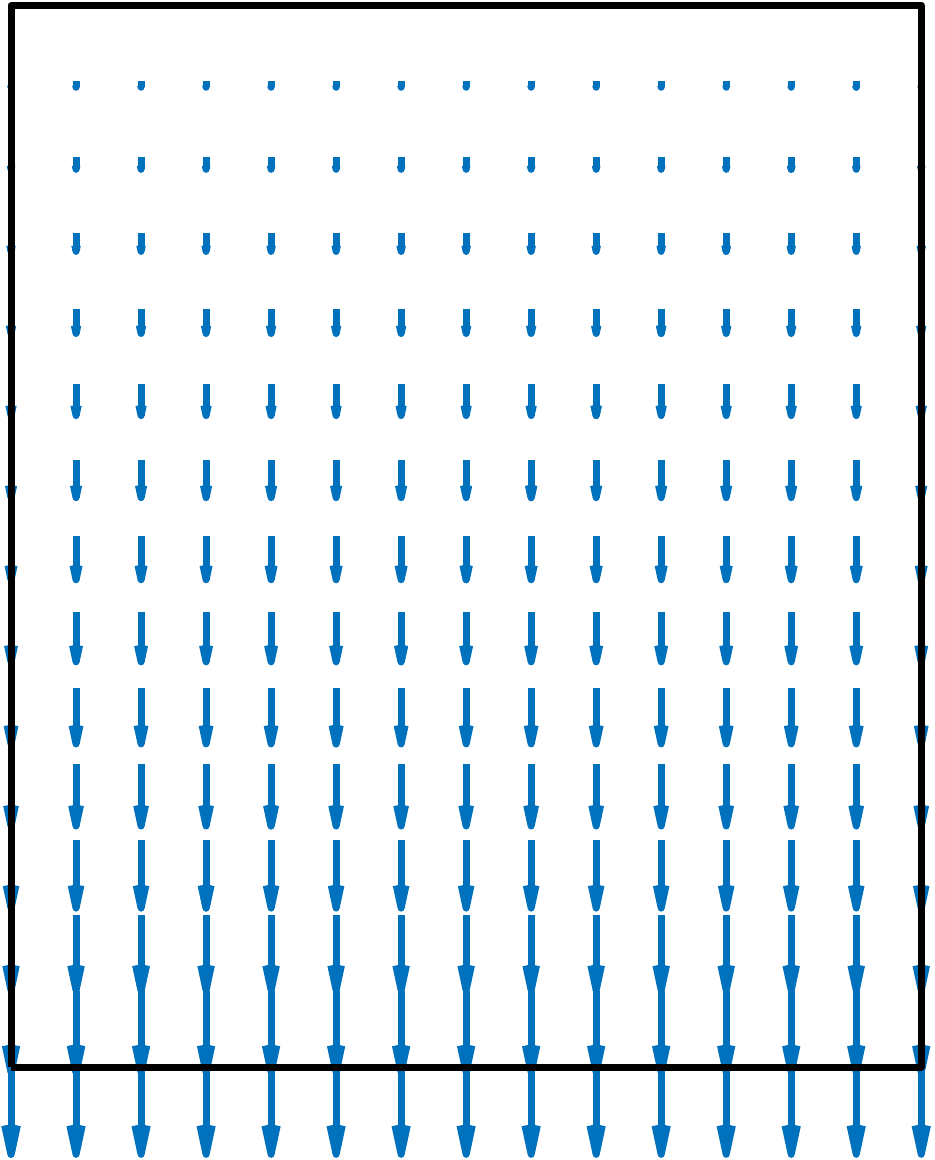}
&\includegraphics[width=0.21\textwidth,height=0.21\textwidth]{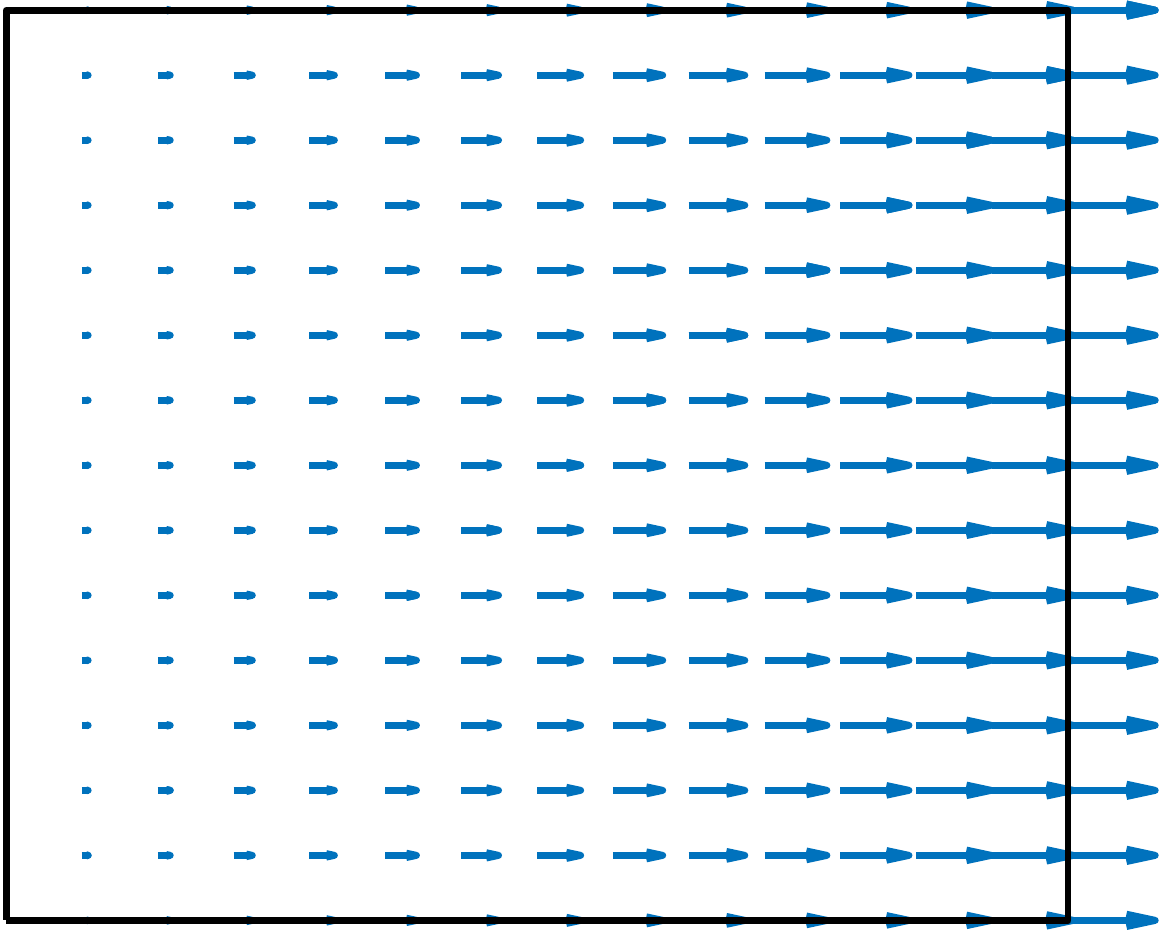}
&\includegraphics[width=0.21\textwidth,height=0.21\textwidth]{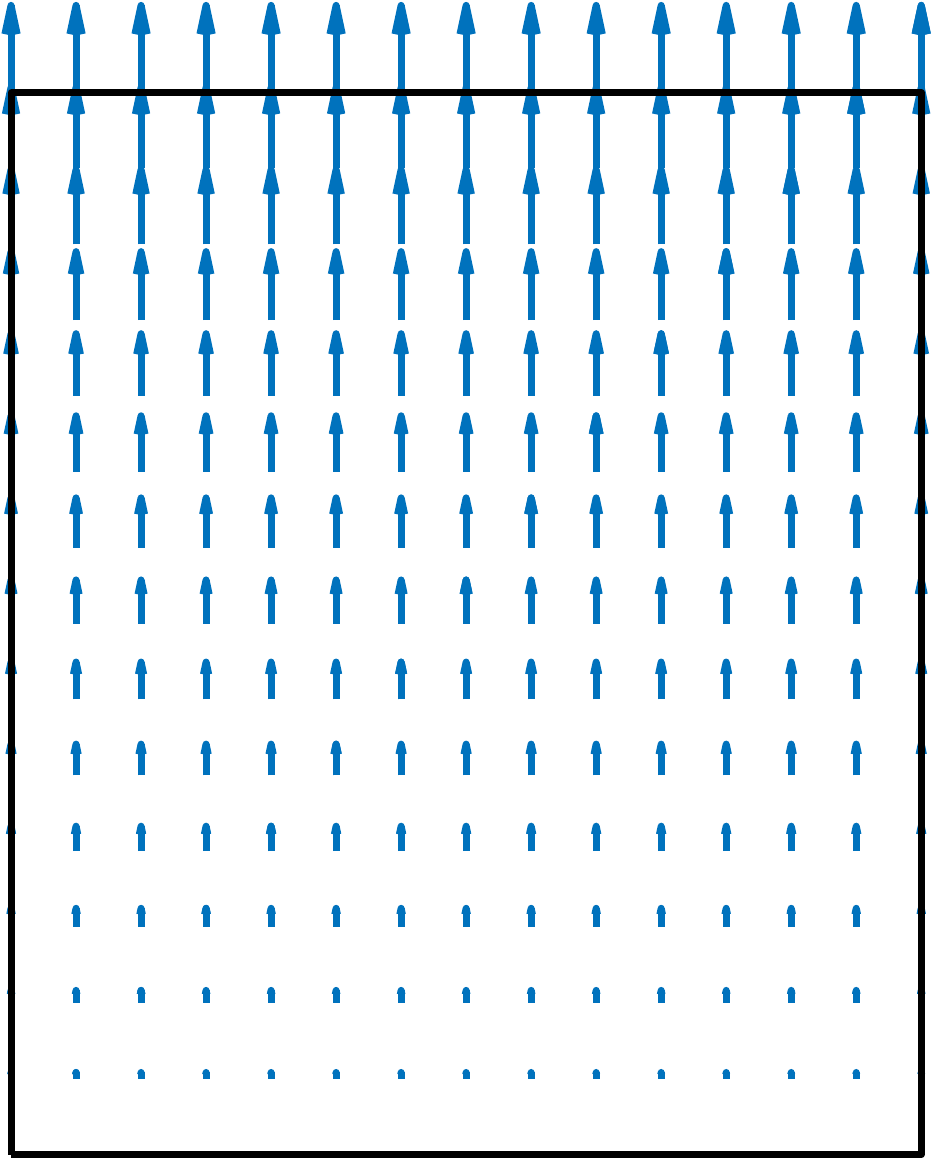}
&\includegraphics[width=0.21\textwidth,height=0.21\textwidth]{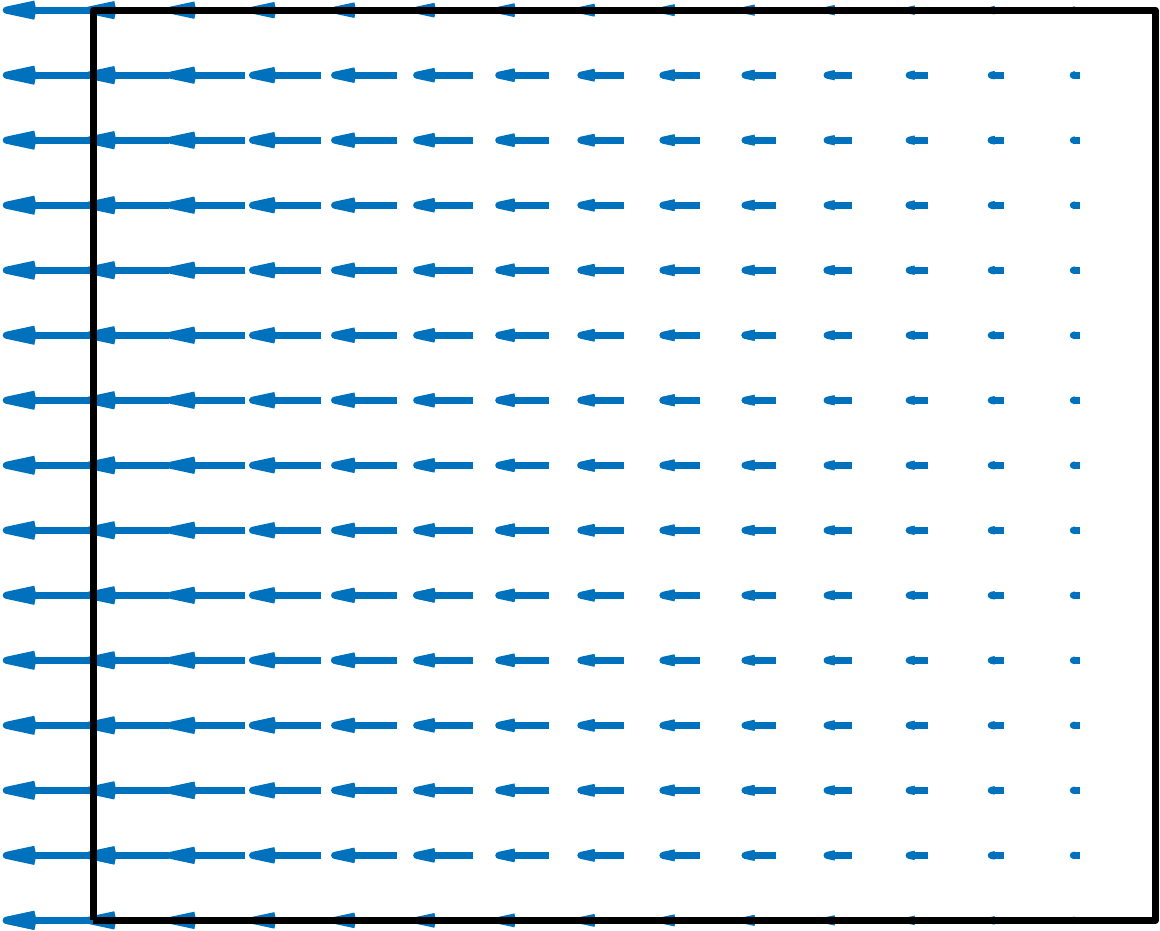}
\\
(a) & (b) & (c) &(d)\\
\includegraphics[width=0.21\textwidth,height=0.21\textwidth]{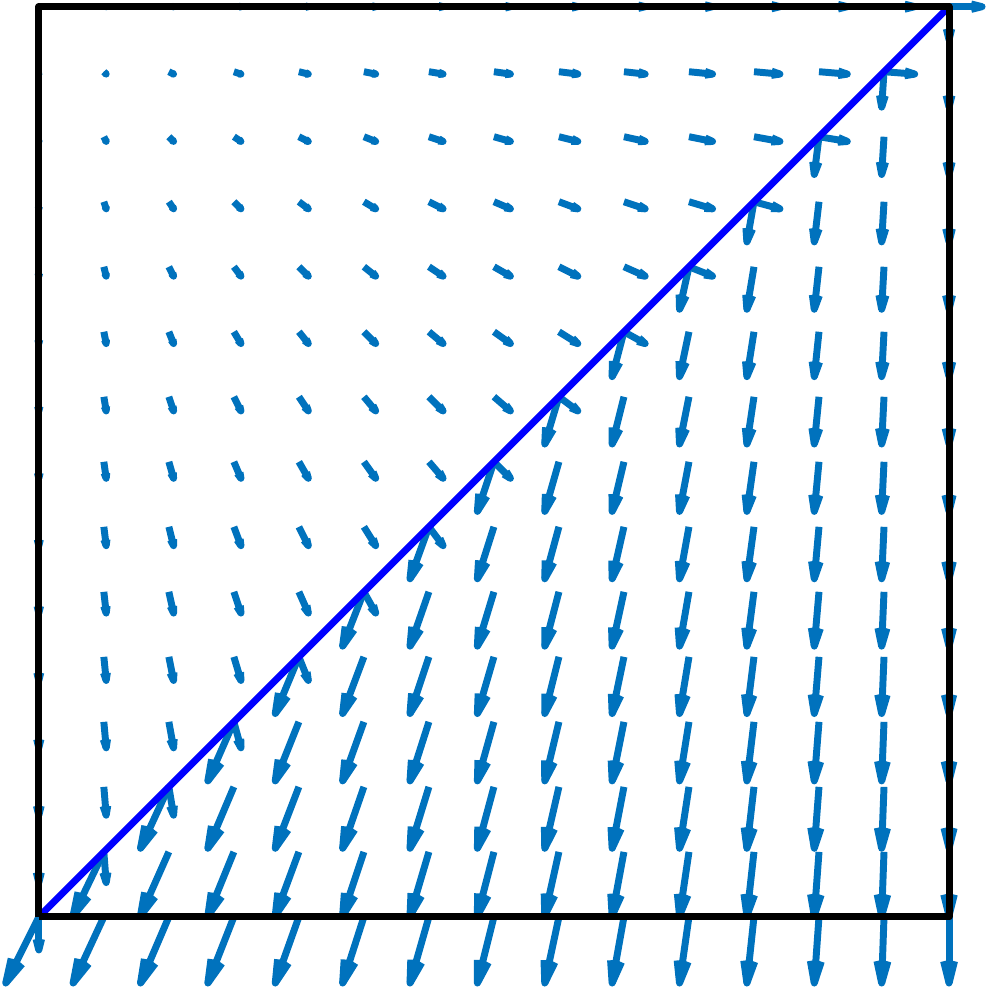}
&\includegraphics[width=0.21\textwidth,height=0.21\textwidth]{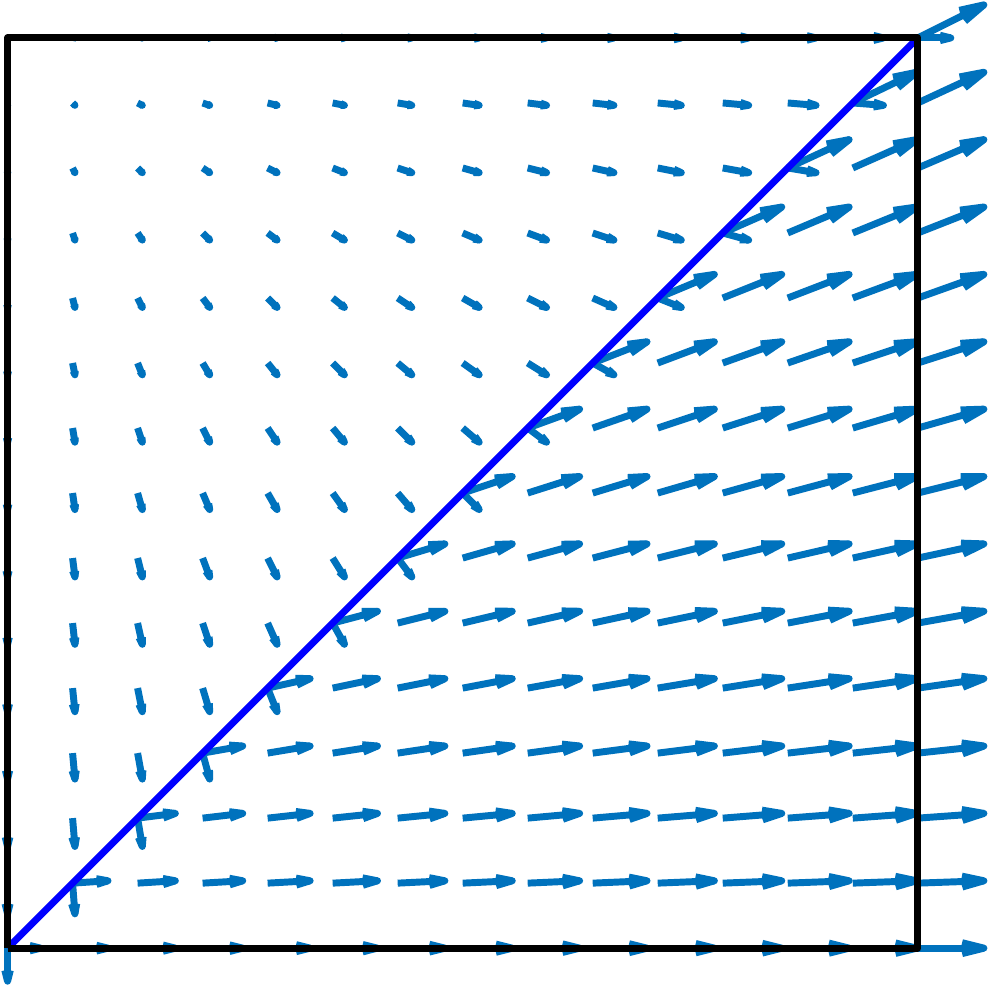}
&\includegraphics[width=0.21\textwidth,height=0.21\textwidth]{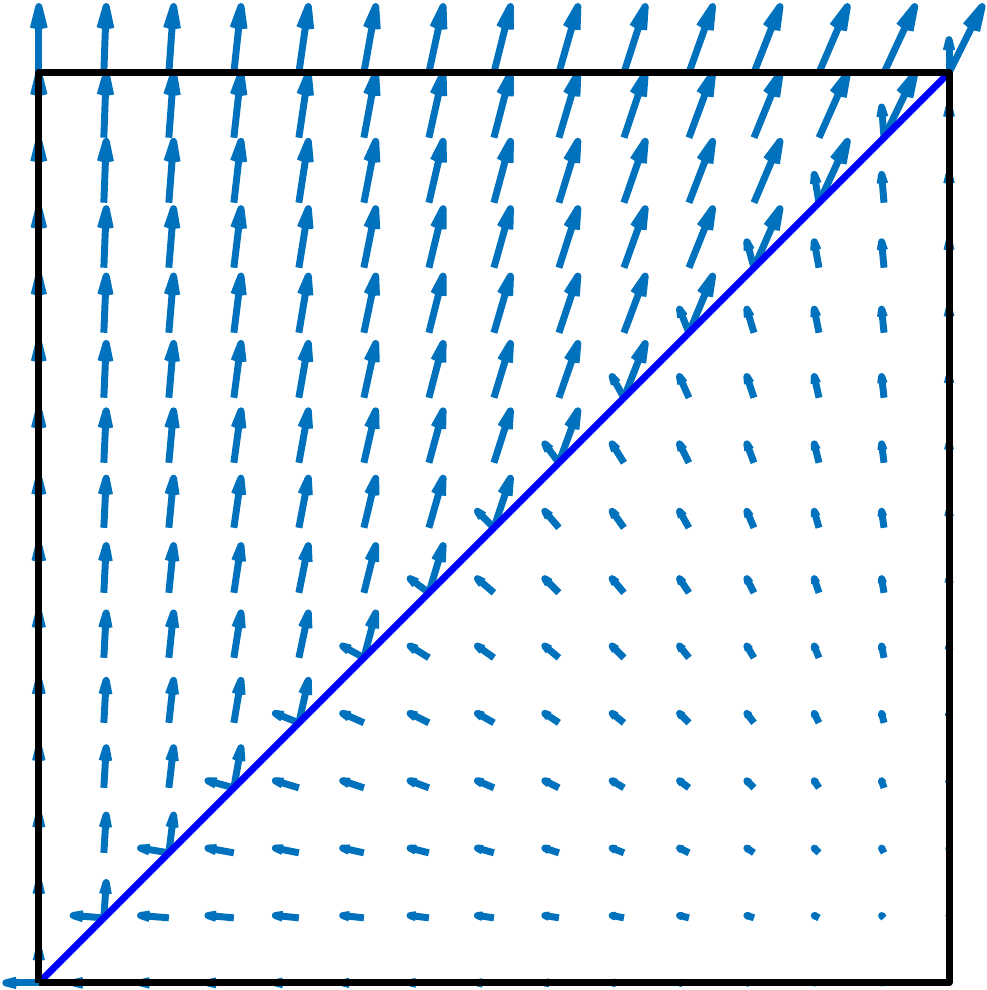}
&\includegraphics[width=0.21\textwidth,height=0.21\textwidth]{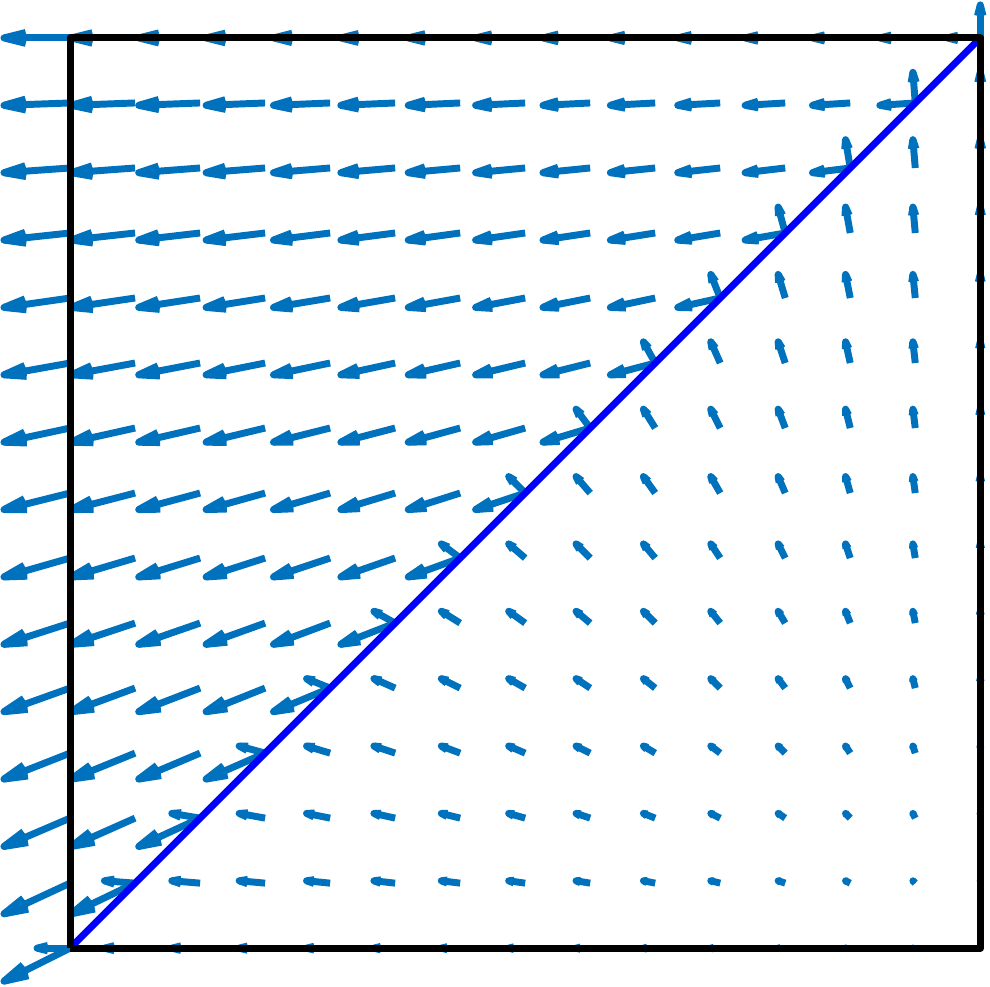}\\
(e) & (f) & (g) &(h)
\end{tabular}
\caption{Example~\ref{Num-1}. Illustration of the bases for RT$_{[0]}$(T) and $\Lambda_0(T)$.} \label{fig:ex1_quiver2_rect}
\end{figure}

\begin{figure}[H]
\centering
\includegraphics[width=0.5\textwidth]{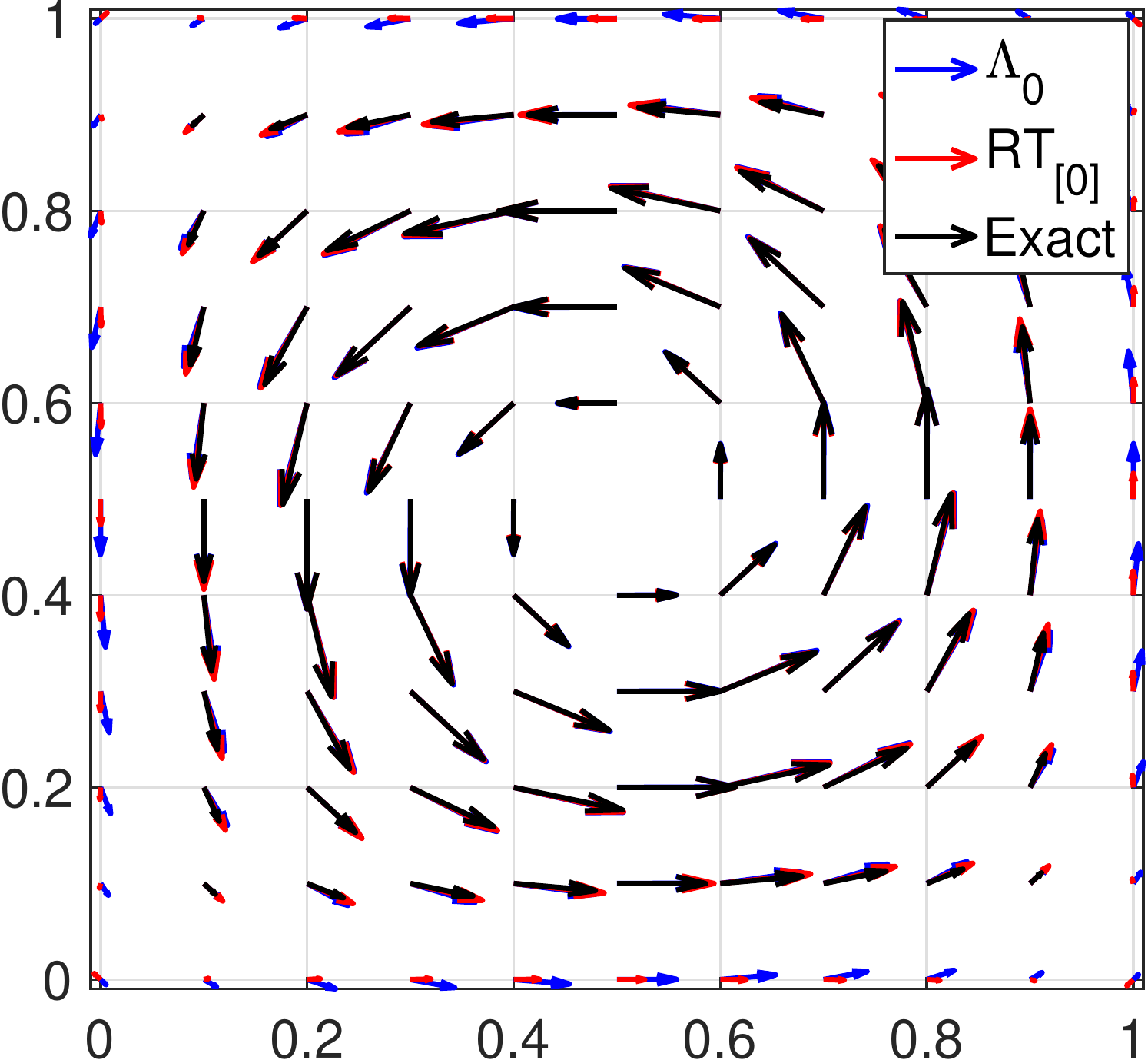}
\caption{Example~\ref{Num-1}. Vector plots for employing $\Lambda_0$, RT$_{[0]}$ spaces in the velocity reconstruction and exact solution on the mesh with $h=1/10$ and $\nu = 10^{-10}$.} \label{fig:ex1_quiver_RTLambda}
\end{figure}

\begin{table}[H]
\caption{Example~\ref{Num-1}. Error profiles for $\Lambda_0$ and RT$_{[0]}$ elements for various values in $\nu$.}\label{Tab:Ex1-2}
\centering
\tabcolsep=3pt
\begin{tabular}{l||ccc|ccc}\hline\hline
&\multicolumn{3}{c|}{$\Lambda_0$}&\multicolumn{3}{c}{$\text{RT}_{[0]}(T)$}\\ \hline
& $\3bar\be_h\3bar$ & $\|\be_0\|$ & $\|\epsilon_h\|$ & $\3bar\be_h\3bar$ & $\|\be_0\|$ & $\|\epsilon_h\|$\\ \hline
$\nu = 1$&2.8461E-2   &1.7287E-4   &1.0284E-3 
&4.3881E-2   &4.5269E-4   &6.7309E-3\\ \hline
$\nu = 1$E-2 &2.8461E-2   &1.7287E-4   &1.0284E-5
&4.3881E-2   &4.5269E-4   &6.7309E-5
\\ \hline
$\nu = 1$E-4 &2.8461E-2   &1.7287E-4   &1.0284E-7
&4.3881E-2   &4.5269E-4   &6.7309E-7
\\ \hline
$\nu = 1$E-6 &2.8461E-2   &1.7287E-4   &1.0284E-9
&4.3881E-2   &4.5269E-4   &6.7309E-9
\\ \hline
$\nu = 1$E-8 &2.8461E-2   &1.7287E-4   &1.0283E-11
&4.3881E-2   &4.5269E-4   &6.7310E-11
\\ \hline
$\nu = 1$E-10 &2.8461E-2   &1.7287E-4   &1.0284E-13
&4.3881E-2   &4.5265E-4   &7.6583E-13
\\ \hline\hline
\end{tabular}
\end{table}
\item{\bf Comparison with RT$_{[0]}(T)$ space on the rectangular mesh.}
In this test, we compare the numerical performance of employing the rectangular RT element\cite{ABF2005} with $k=0$ (denoted as RT$_{[0]}$) and our proposed projection space $\Lambda_0(T)$. We compute the velocity reconstructions $\Pi_h\bv$, respectively, into the spaces RT$_{[0]}(T)$ and $\Lambda_0(T)$.   Four basis functions on unit square are plotted in Figure~\ref{fig:ex1_quiver2_rect}. It is noted that the constructed basis in the $\Lambda_0(T)$ space is piecewise polynomial with normal component continuous cross the interior edge. We plot the numerical vector field in Figure~\ref{fig:ex1_quiver_RTLambda} on the mesh with $h=1/10$ and $\nu = 10^{-10}$ for the $\Lambda_0$ and RT$_{[0]}$ version. The numerical vector fields generated by employing $\Lambda_0$ and RT$_{[0]}$ spaces agree well with the exact vector field. This observation validates that both of the tests are pressure robust. Furthermore, the error profiles for various values in $\nu$ are reported in Table~\ref{Tab:Ex1-2}. It can be validated again from this table that by employing the velocity reconstruction both into $\Lambda_0$ and RT$_{[0]}$ spaces will produce the pressure robust simulation. Also the errors in the $\Lambda_0$ version are slightly better than that of the RT$_{[0]}$ space.

\begin{figure}[H]
\centering
\begin{tabular}{cc}
\includegraphics[width=0.45\textwidth]{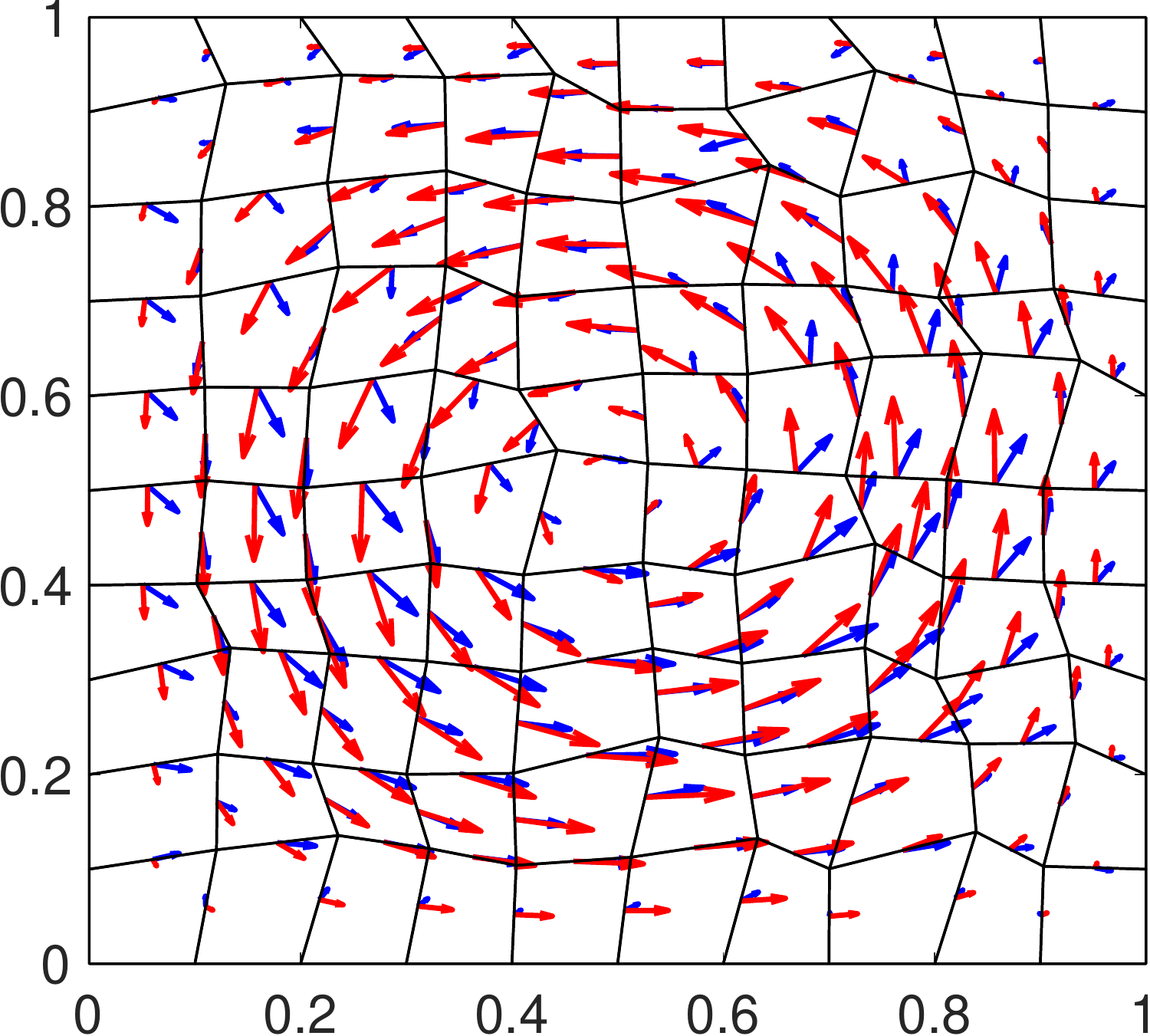}
&\includegraphics[width=0.45\textwidth]{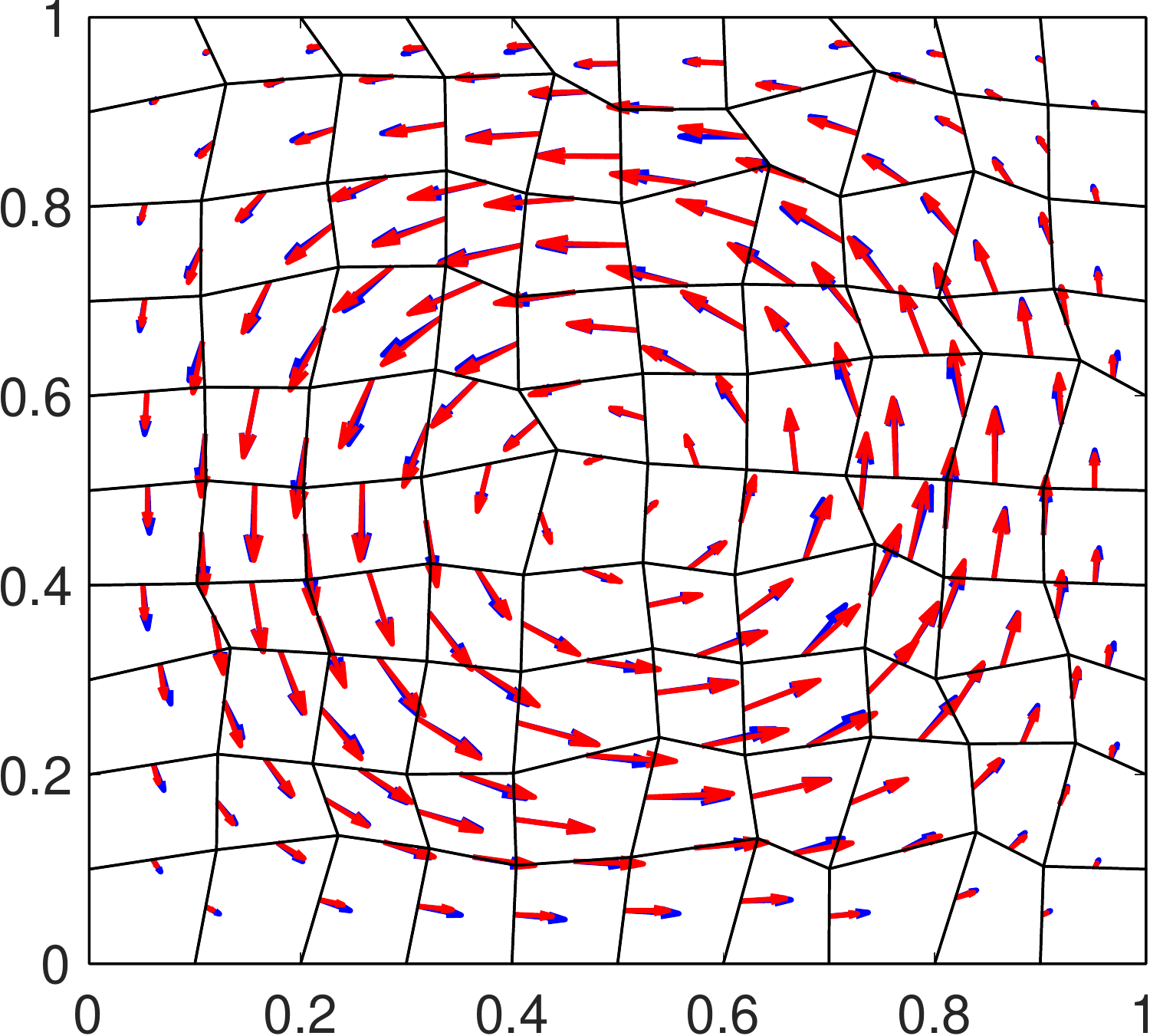}
\\
(a) & (b) 
\end{tabular}
\caption{Example~\ref{Num-1}. Plot of the velocity vector fields. Red arrow denotes the exact solution; Blue arrow plots the numerical solution. (a) Algorithm~\ref{Algorithm:WG2}; (b) Algorithm~\ref{Algorithm:WG1}.} \label{fig:ex1_quiver2}
\end{figure}
\item\textbf{Test case with $\nu = 1$ on deformed rectangular grid.} In this test, we shall perform WG Algorithm~1-2 on the deformed rectangular grid and compare the numerical performance on the coarse mesh. Let $k = 0$ and $h = 1/10$ and the numerical solutions on the uniform rectangular mesh are plotted in Figure~\ref{fig:ex1_quiver2}. 
The pressure robust scheme improves the velocity simulation and preserves very well the exact velocity vector field.

\end{itemize}

\subsection{Effects from the Approximation with Pressure}
Let $\Omega = (0,1)^2$ and the external body force ${\bf f}$ be chosen such that the exact solutions are
\begin{eqnarray}
\bu = 0,\text{ and }p(x,y) = \sum_{j=0}^7 x^jy^{7-j} - \frac{761}{1260}.
\end{eqnarray}
In this test, we perform WG Algorithm~\ref{Algorithm:WG1} on the deformed rectangular mesh and the polygonal mesh  shown in Figure~\ref{Fig:Num2-grid}. We shall employ the velocity reconstruction operator into the $\Lambda_0$ and $\mathbb{CW}_0$ spaces and check the corresponding robustness. The scheme with respect to $\mathbb{CW}_0$ space was investigated in \cite{mu}.
\begin{figure}[H]
\centering
\begin{tabular}{cc}
\includegraphics[width=0.45\textwidth,height=.45\textwidth]{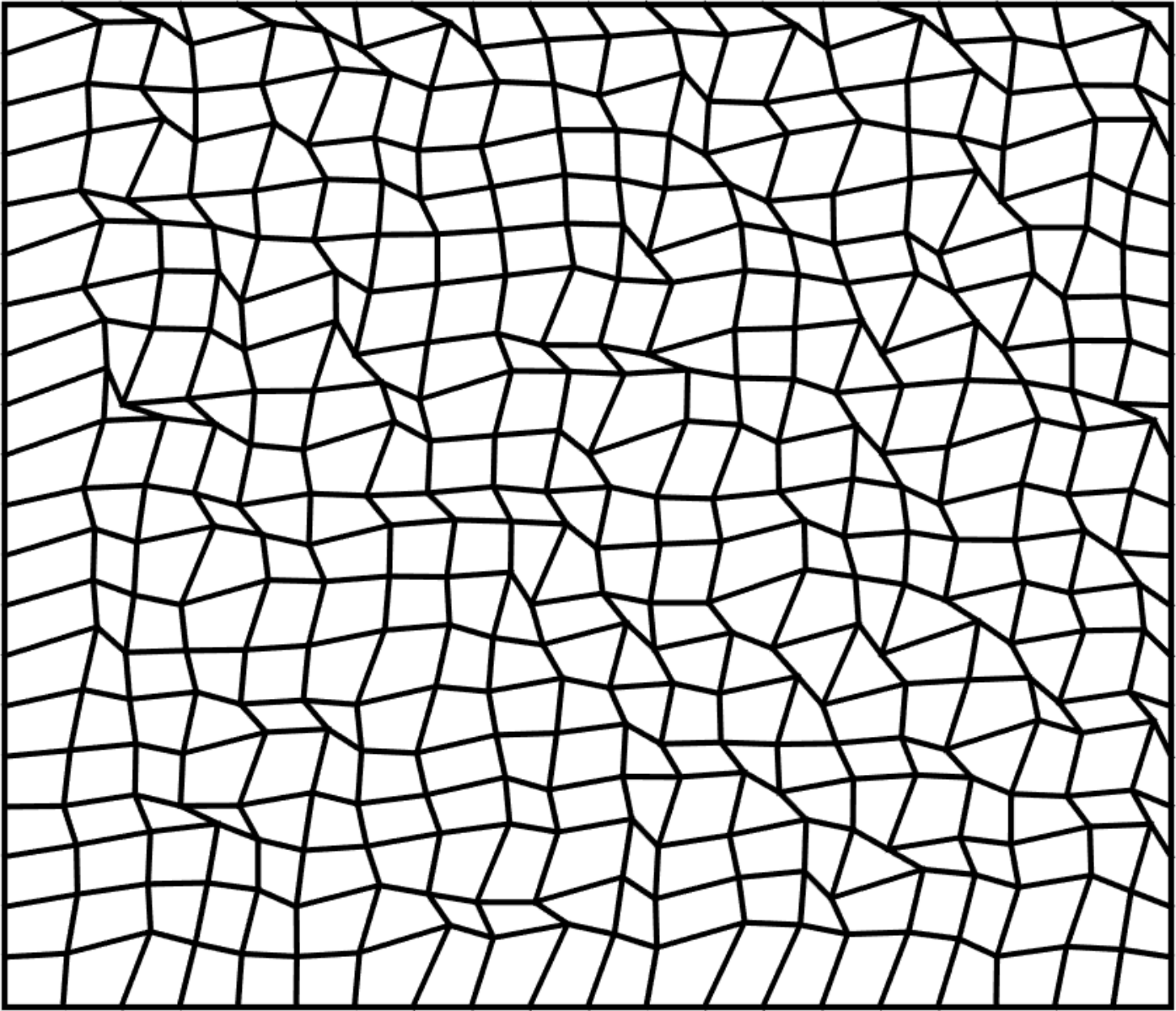}
&\includegraphics[width=0.45\textwidth,height=.45\textwidth]{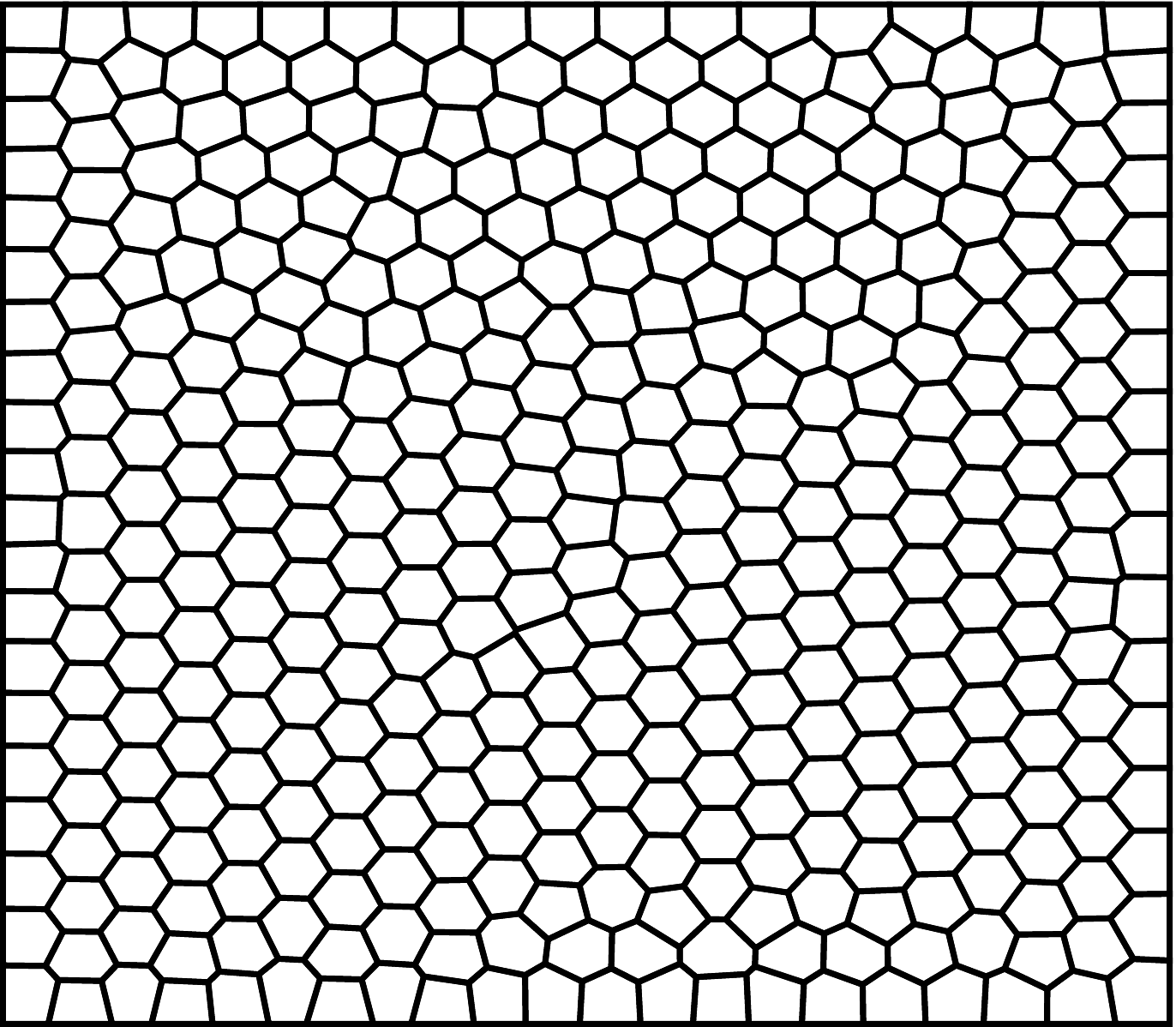}
\\
(a) & (b) 
\end{tabular}
\caption{Example~\ref{Sect:Num-2}. illustration of the polygonal grids. (a) Deformed rectangular grid; (b) Polygonal grid.}\label{Fig:Num2-grid}
\end{figure}

\subsubsection{Test - deformed rectangular mesh}\label{Sect:Num-2}
In this section, we perform the weak Galerkin Algorithm on the deformed rectangular mesh (Figure~\ref{Fig:Num2-grid}a). Since this mesh contains non-convex cell, the $\mathbb{CW}_0$ basis can not be applied as the velocity reconstruction space. However, our proposed $\Lambda_0$ space can be directly chosen for the velocity reconstruction.

The numerical results corresponding to Algorithm~\ref{Algorithm:WG2} and Algorithm~\ref{Algorithm:WG1} are plotted in Figure~\ref{Num2-rect-1}-\ref{Num2-rect-2}. We observe that:
\begin{itemize}
\item The non-pressure robust scheme produces a numerical velocity with magnitude at the order $\mathcal{O}(10^{-3})$. Thus, the error in pressure, in fact, gives rise to the inaccuracy in the velocity simulation. This visualizes the lack of pressure-robustness.
\item By only modifying the body source assembling, we can obtain the pressure robust results. The numerical solutions corresponding to Algorithm~\ref{Algorithm:WG1} are plotted in Figure~\ref{Num2-rect-2}. It is noted that the velocity simulation is almost zero and very closed to the expected zero flow, which is at the order $\mathcal{O}(10^{-18})$.
\item The above observations validate our enhanced discretization on the non-convex meshes. 
\end{itemize}

\begin{figure}[H]
\centering
\begin{tabular}{ccc}
\includegraphics[width=0.32\textwidth]{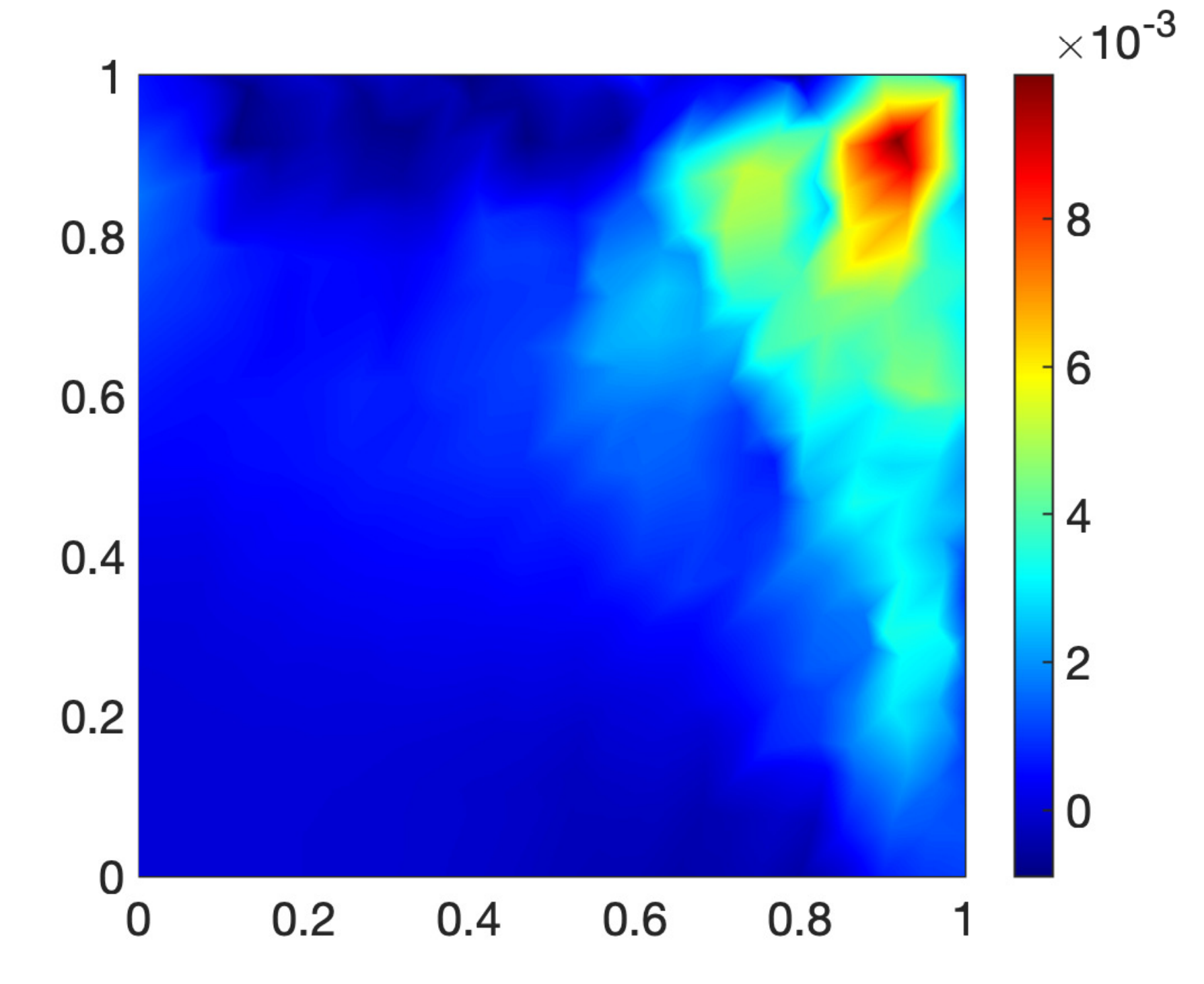}
&\includegraphics[width=0.32\textwidth]{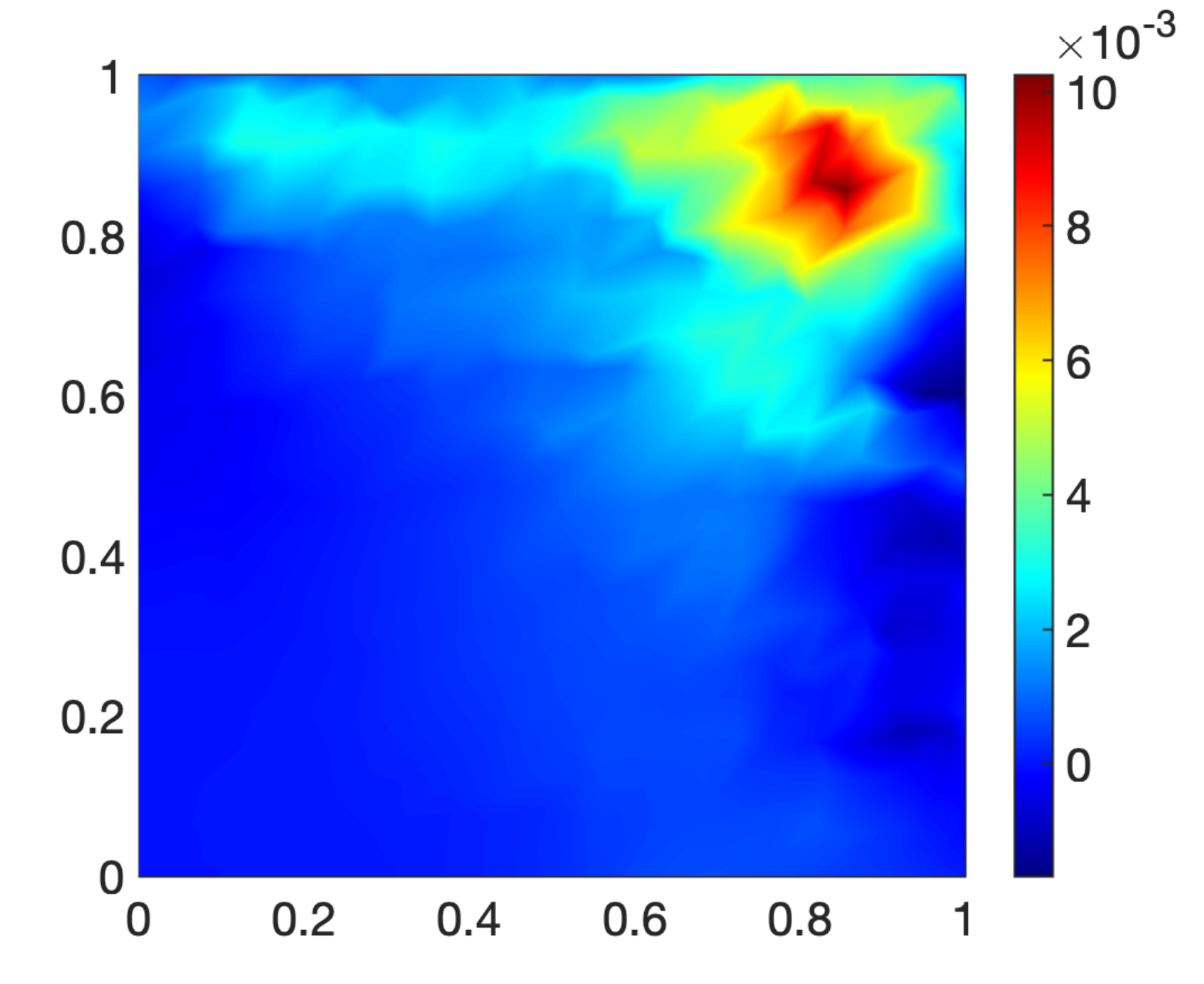}
&\includegraphics[width=0.31\textwidth]{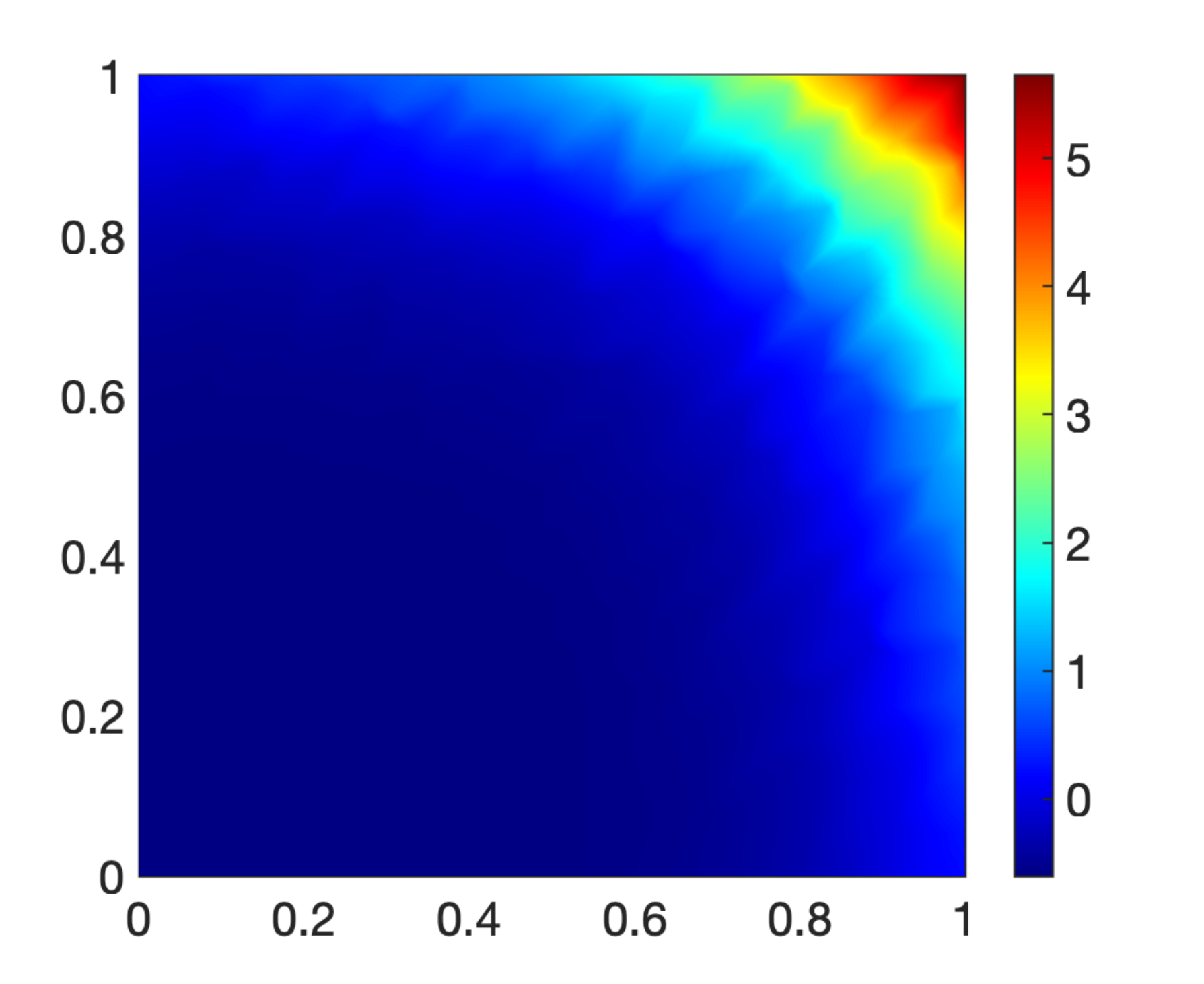}
\\
(a) & (b)  &(c)
\end{tabular}
\caption{Test~\ref{Sect:Num-2}. Illustration of numerical solutions from Algorithm~\ref{Algorithm:WG2} on the deformed rectangular grid. (a). x-component of velocity; (b) y-component of velocity; (c) pressure.}\label{Num2-rect-1}
\end{figure}

\begin{figure}[H]
\centering
\begin{tabular}{ccc}
\includegraphics[width=0.32\textwidth]{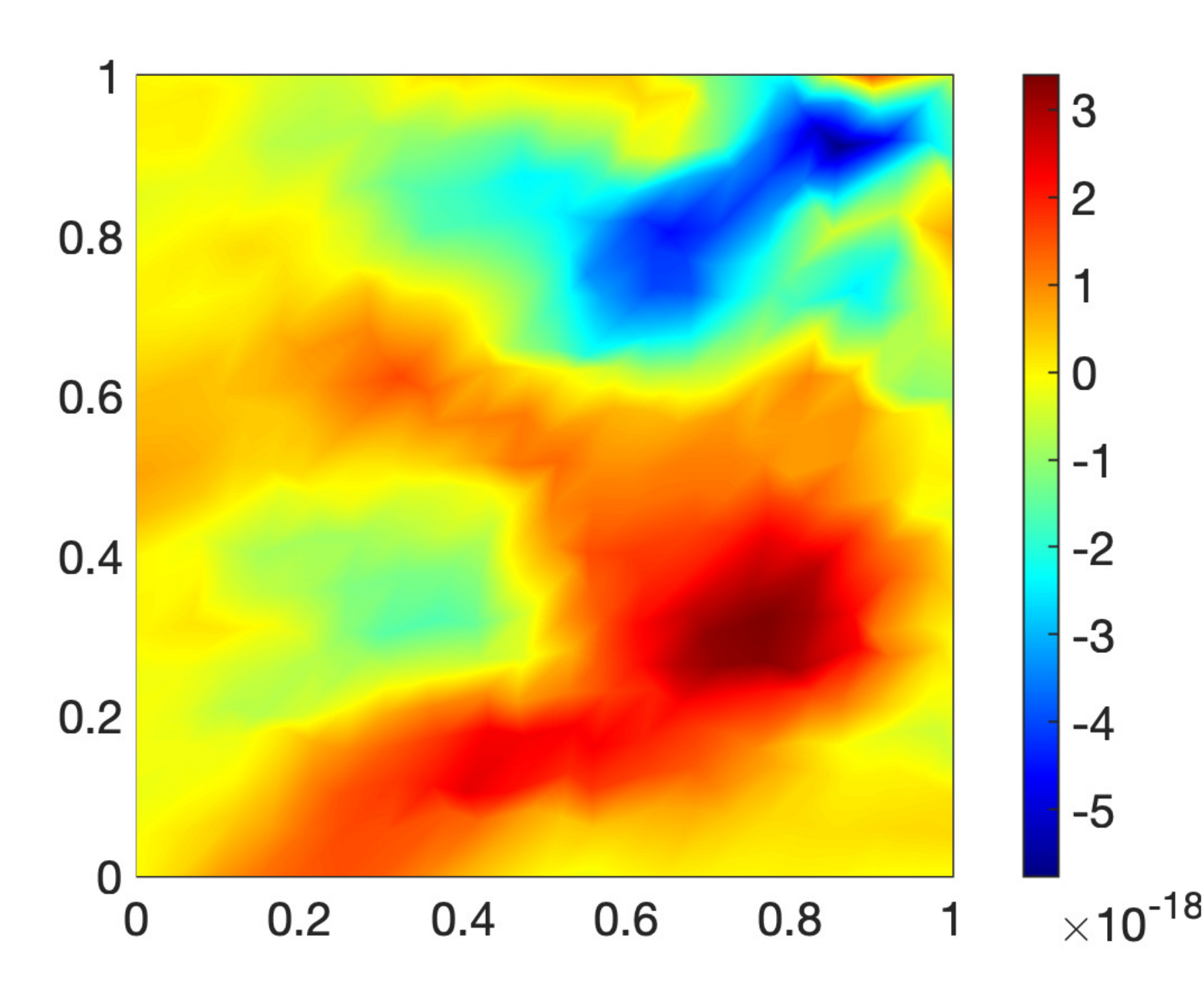}
&\includegraphics[width=0.32\textwidth]{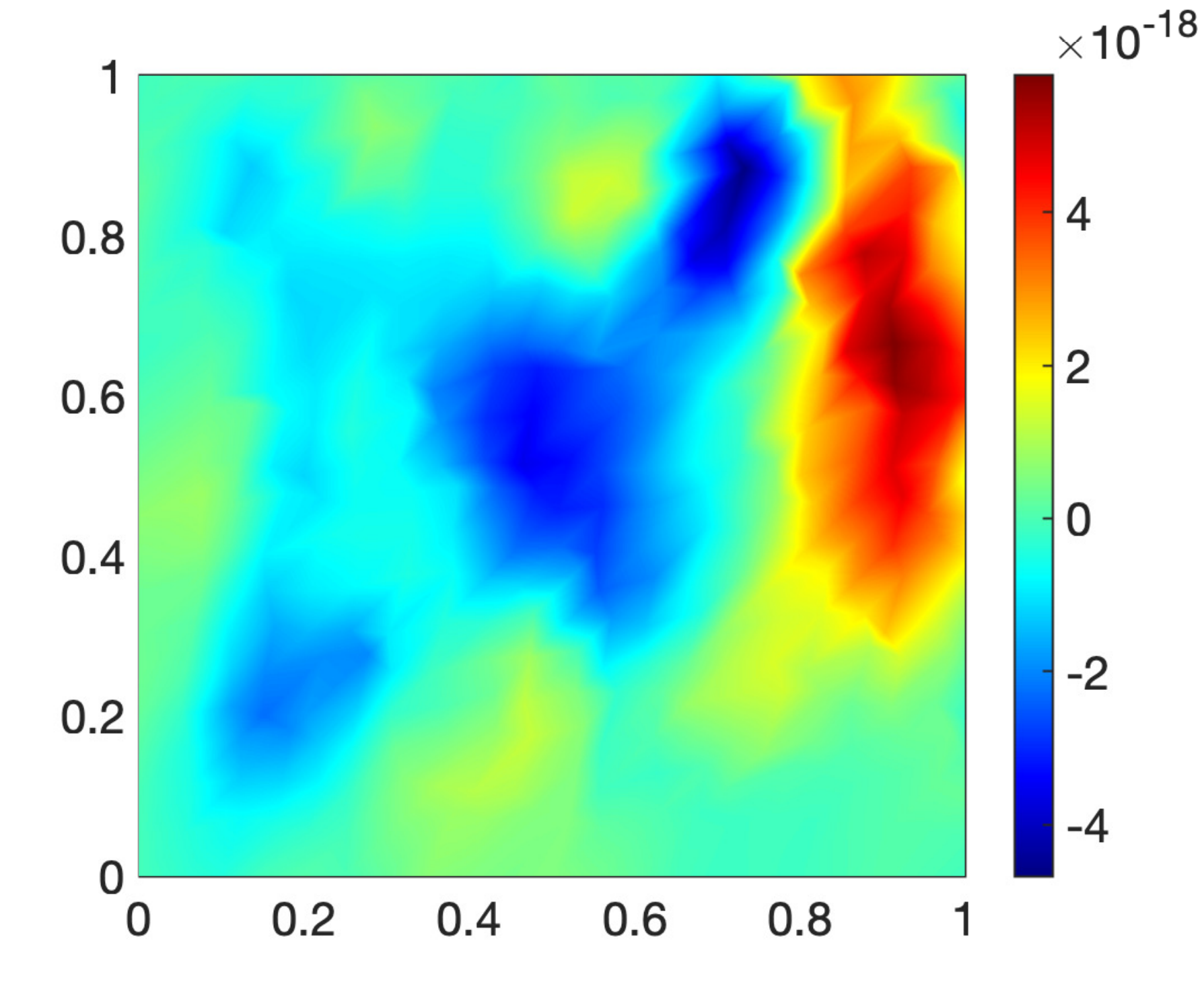}
&\includegraphics[width=0.31\textwidth]{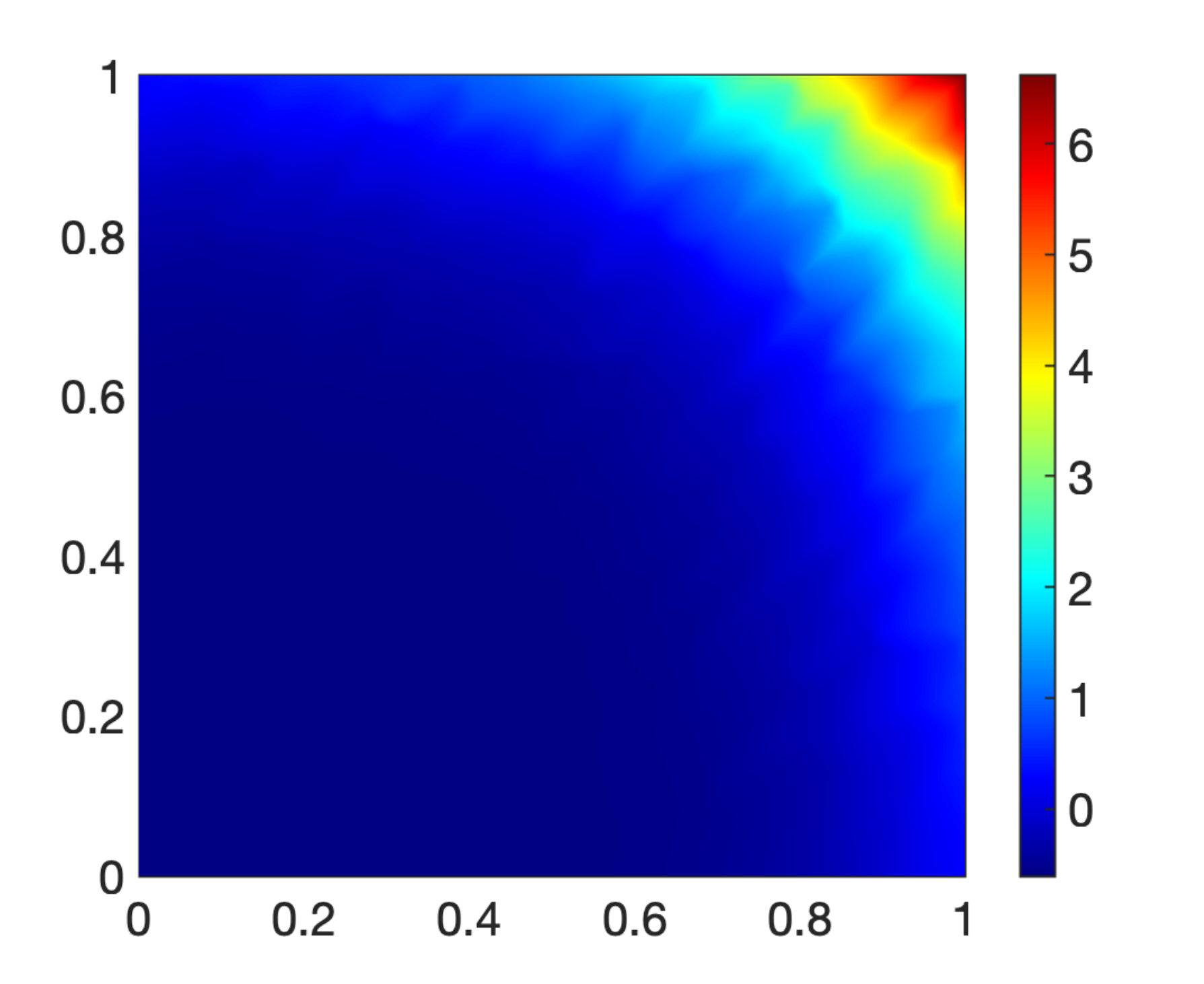}
\\
(a) & (b) & (c)
\end{tabular}
\caption{Test~\ref{Sect:Num-2}. Illustration of numerical solutions from Algorithm~\ref{Algorithm:WG1} on the deformed rectangular grid. (a). x-component of velocity; (b) y-component of velocity; (c) pressure.}\label{Num2-rect-2}
\end{figure}

\subsubsection{Test - polygonal mesh}\label{Sect:Num-2-2}
In this test, we shall perform the simulations on the polygonal grid and compare the numerical performance with the scheme in $\mathbb{CW}_0$ setting\cite{mu}. It is noted that $\mathbb{CW}_0$ basis on the polygonal mesh usually consists of rational functions and thus in some cases the numerical integration cannot calculated exactly. Such inaccuracy in numerical integration will introduce errors in velocity simulation even though the scheme is designed with pressure-robust property. This test will show that be employing the $\Lambda_0$ space we proposed in this paper, we can overcome such inaccuracy integral and achieve complete independency in the velocity simulation.

\begin{figure}[H]
\centering
\begin{tabular}{ccc}
\includegraphics[width=0.32\textwidth]{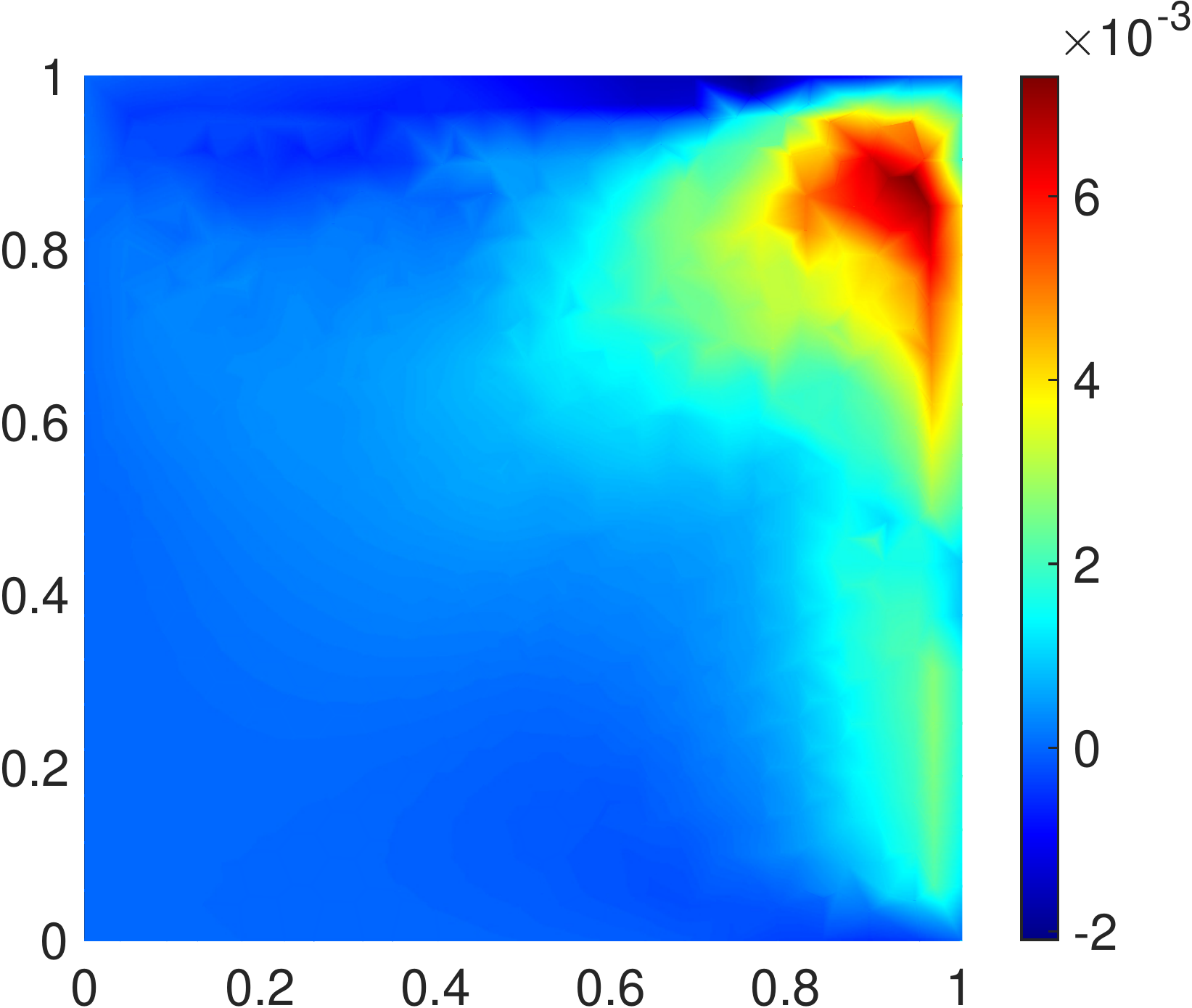}
&\includegraphics[width=0.32\textwidth]{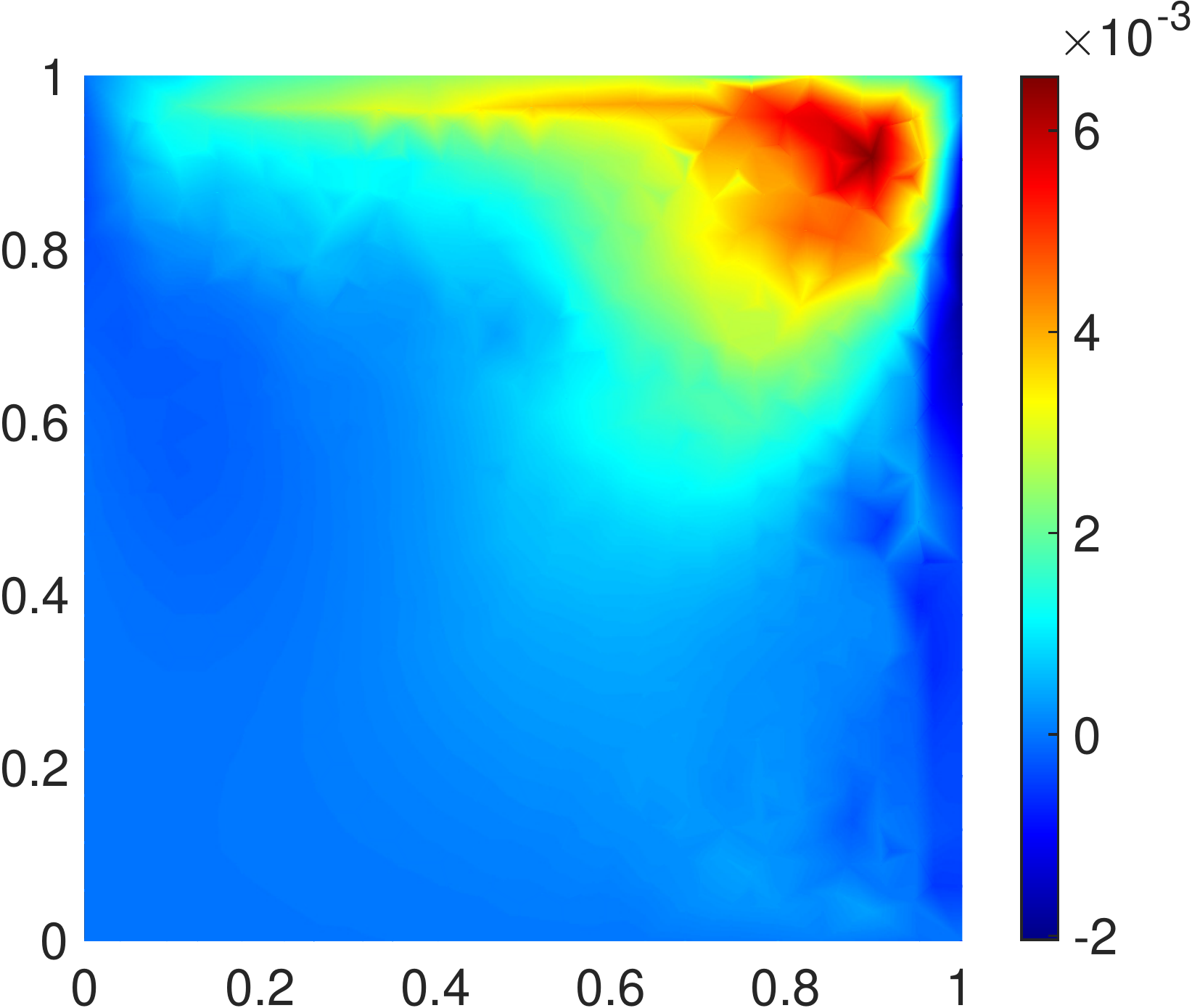}
&\includegraphics[width=0.31\textwidth]{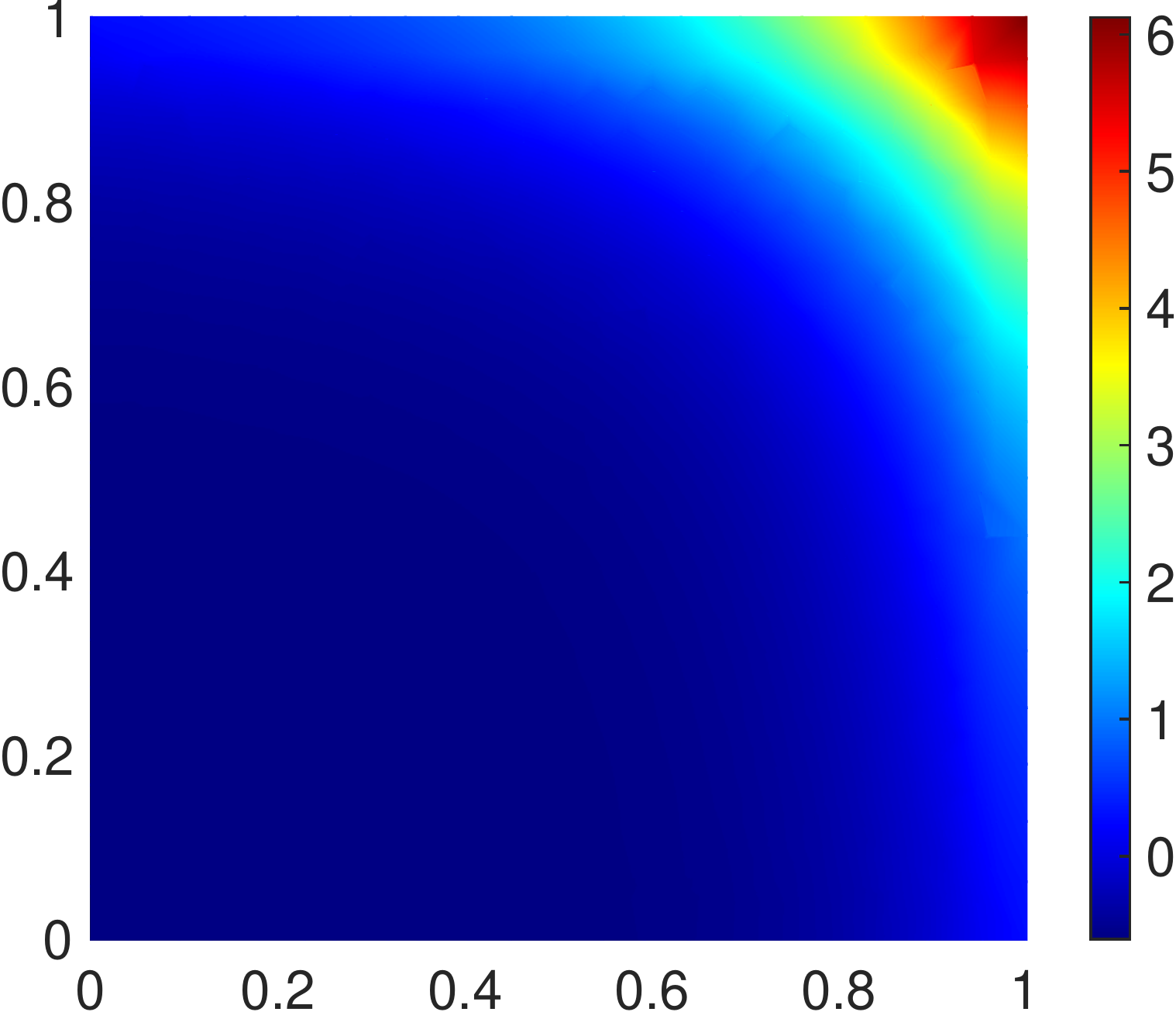}
\\
(a) & (b) & (c)
\end{tabular}
\caption{Test~\ref{Sect:Num-2-2}. Illustration of numerical solutions from Algorithm~\ref{Algorithm:WG2} on the polygonal grid. (a). x-component of velocity; (b) y-component of velocity; (c) pressure.}\label{Num2-1}
\end{figure}

\begin{figure}[H]
\centering
\begin{tabular}{ccc}
\includegraphics[width=0.32\textwidth]{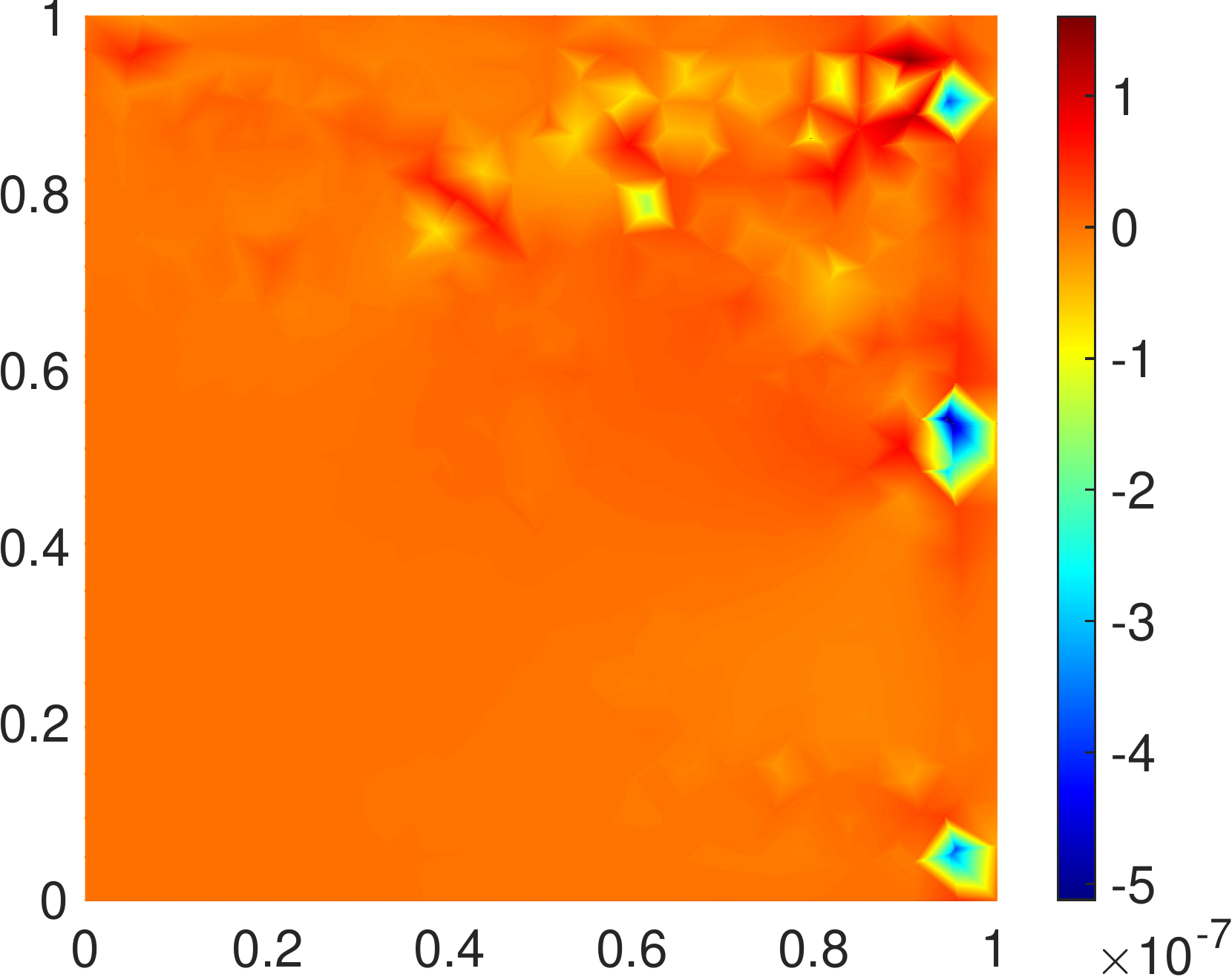}
&\includegraphics[width=0.32\textwidth]{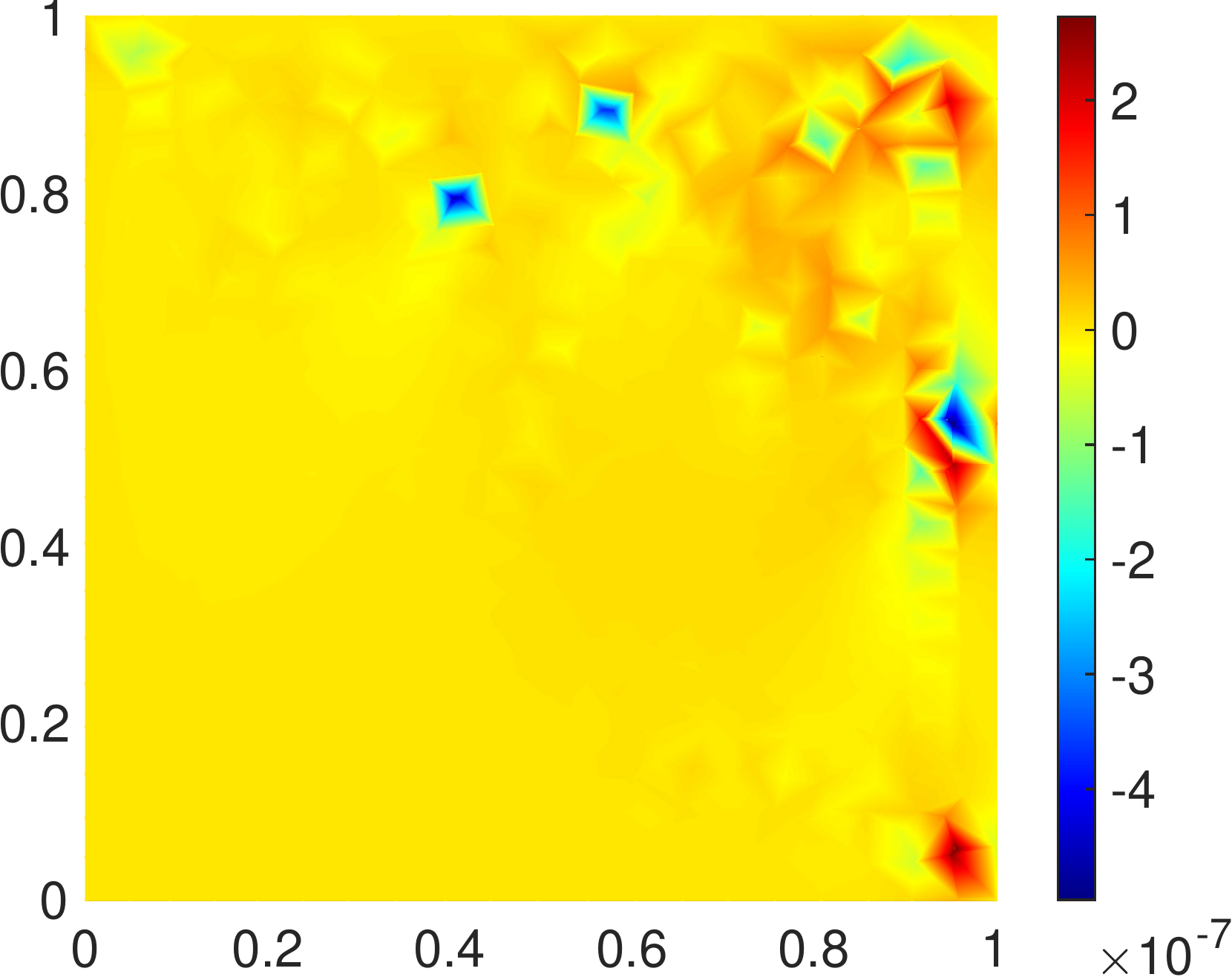}
&\includegraphics[width=0.31\textwidth]{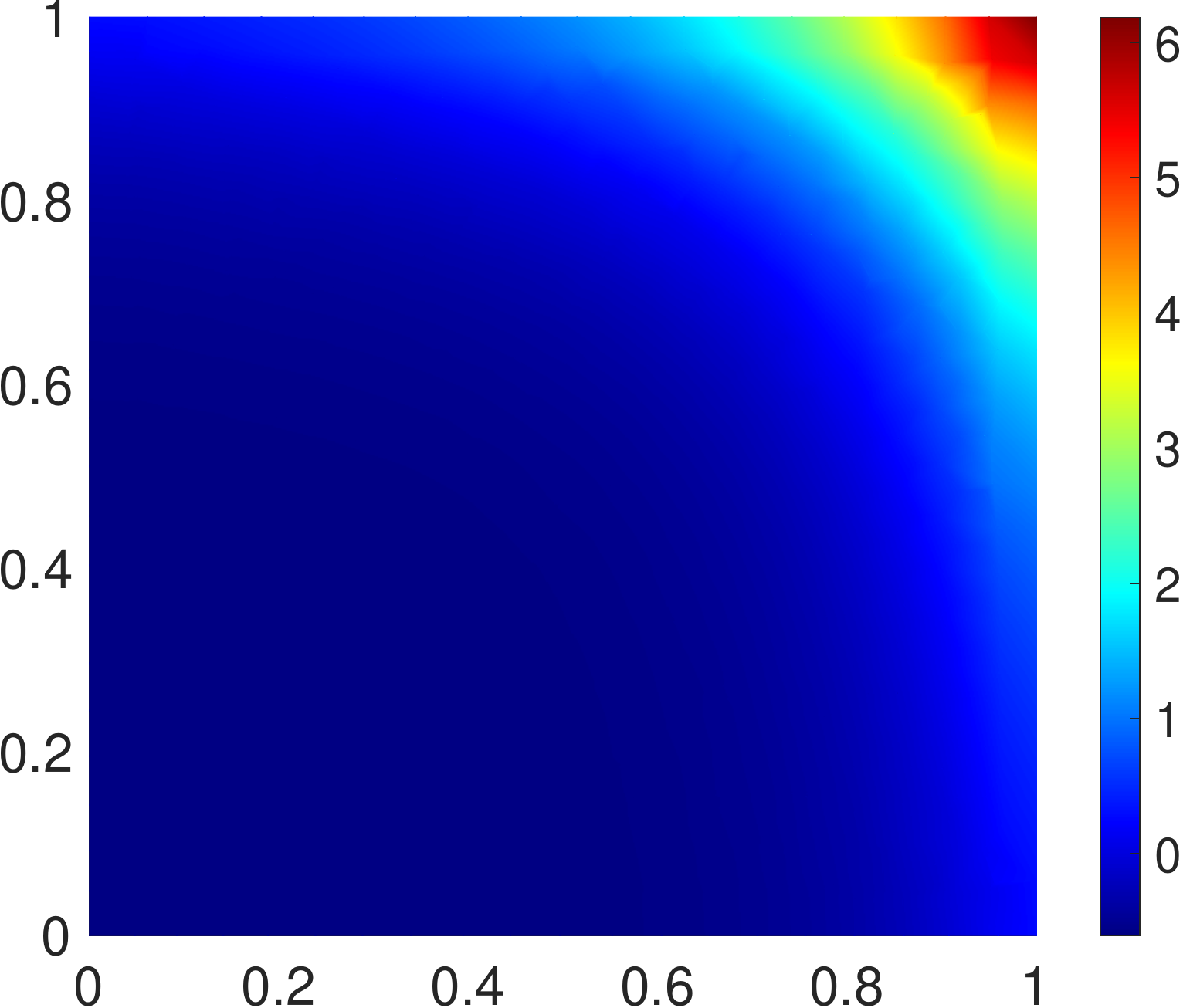}
\\
(a) & (b) & (c)
\end{tabular}
\caption{Test~\ref{Sect:Num-2-2}. Illustration of numerical solutions from Algorithm on the polygonal grid with $\mathbb{CW}_0$\cite{mu}. (a). x-component of velocity; (b) y-component of velocity; (c) pressure.}\label{Num2-2}
\end{figure}

\begin{figure}[H]
\centering
\begin{tabular}{ccc}
\includegraphics[width=0.32\textwidth]{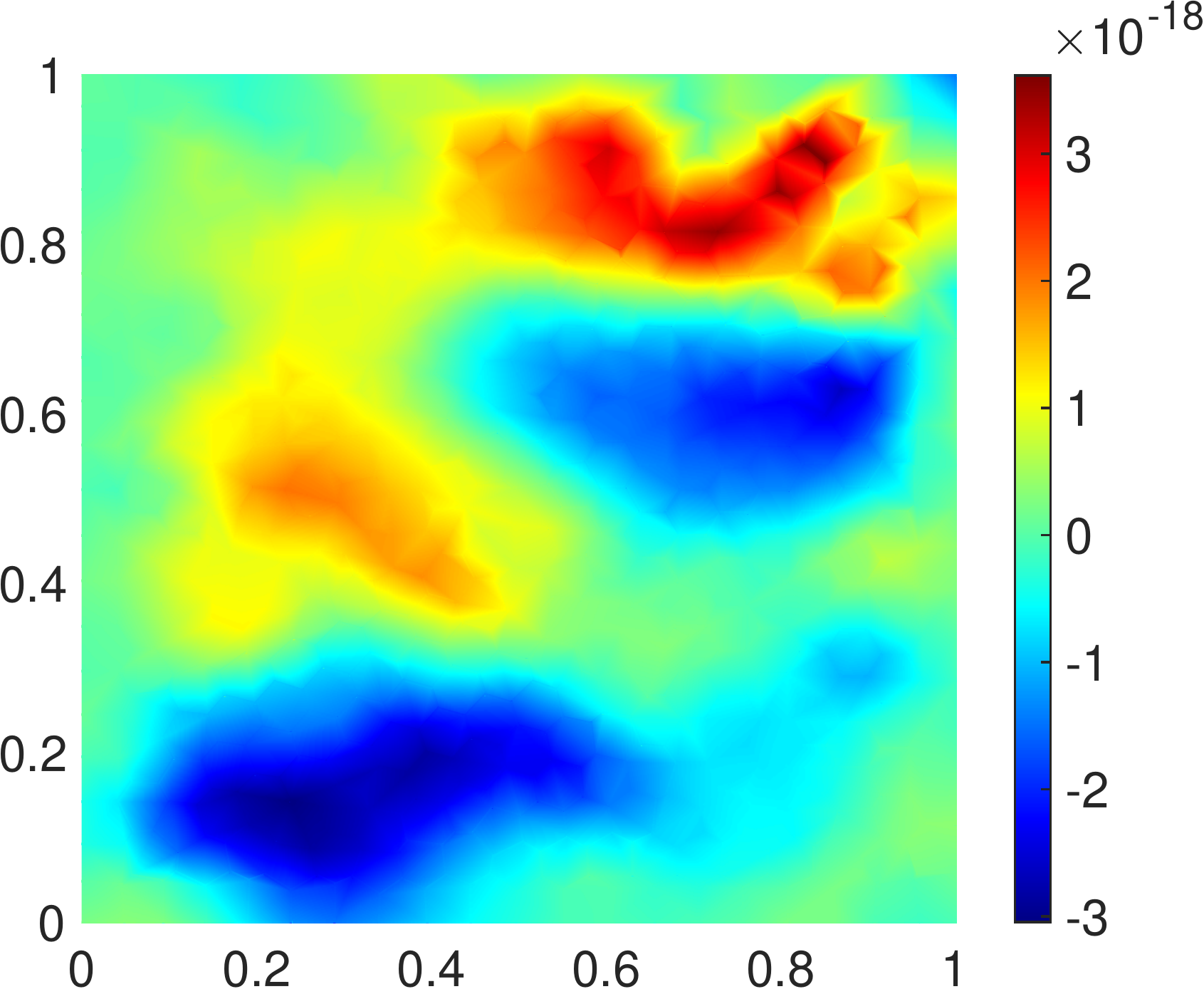}
&\includegraphics[width=0.32\textwidth]{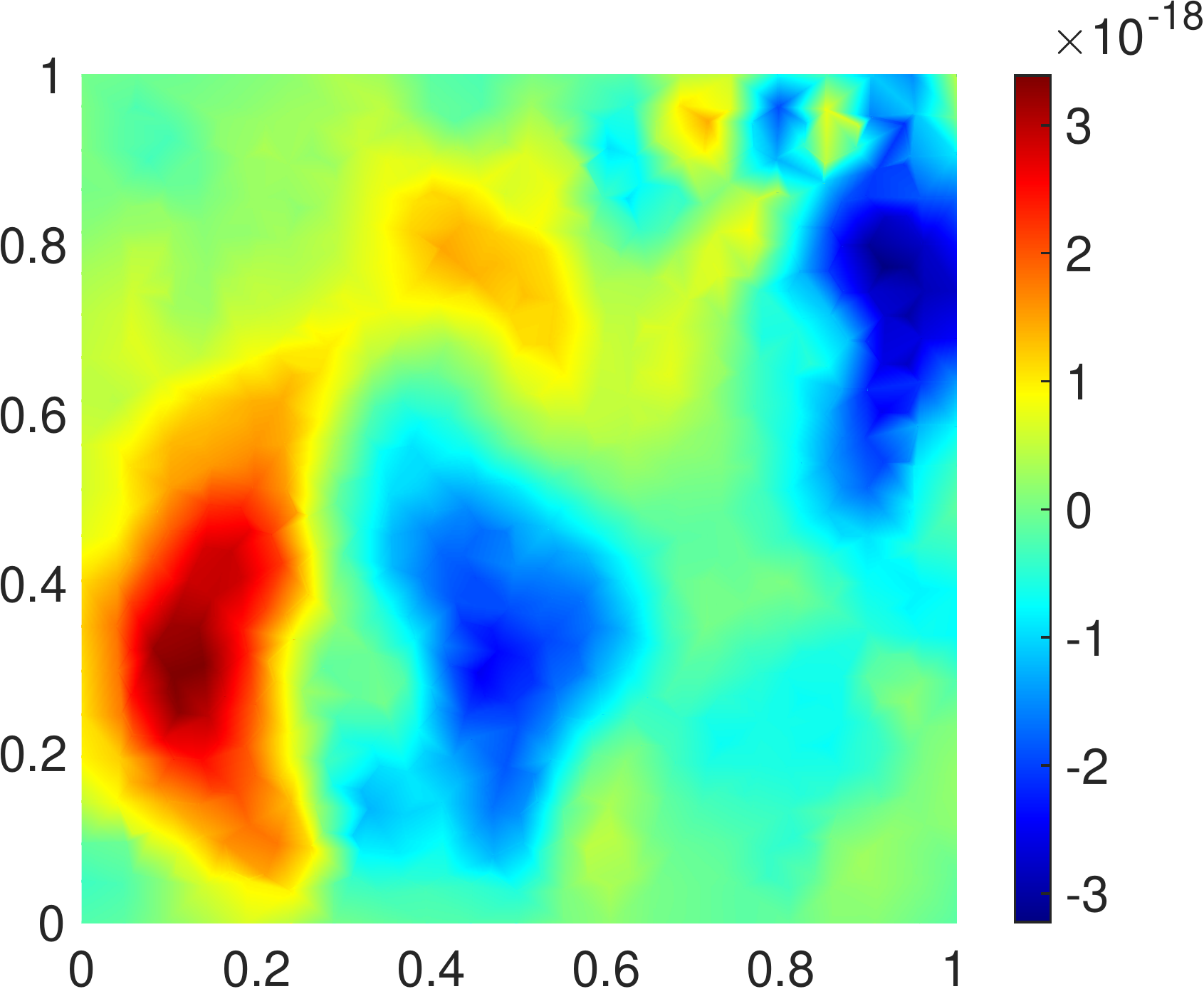}
&\includegraphics[width=0.31\textwidth]{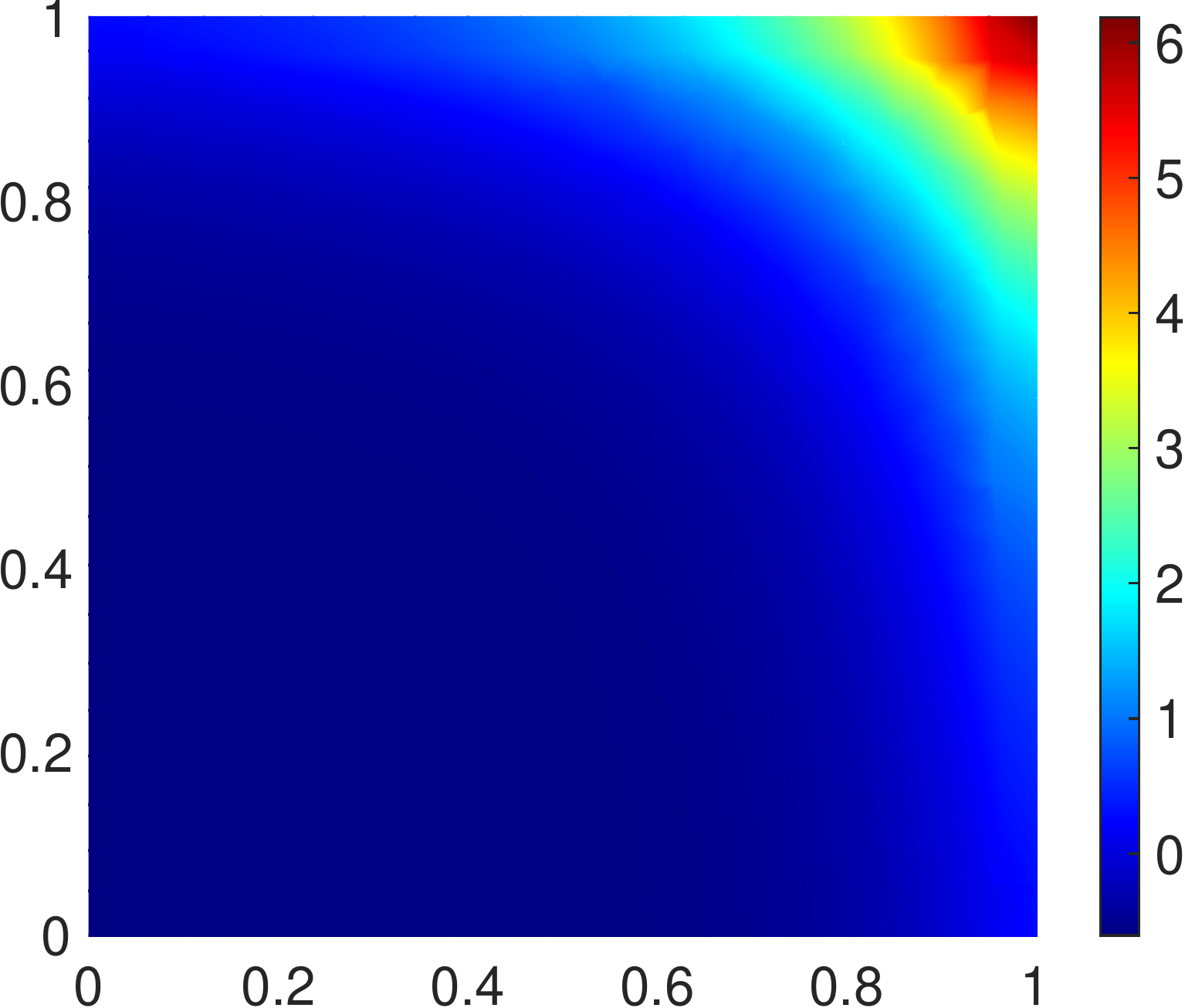}
\\
(a) & (b) & (c)
\end{tabular}
\caption{Test~\ref{Sect:Num-2-2}. Illustration of numerical solutions from Algorithm~\ref{Algorithm:WG1} on the polygonal grid with $\Lambda_0$. (a). x-component of velocity; (b) y-component of velocity; (c) pressure.}\label{Num2-3}
\end{figure}

We perform the simulations by the non-pressure robust and pressure robust Algorithms and the numerical solutions are plotted in Figure~\ref{Num2-1}-\ref{Num2-3}. When the non-pressure robust has been employed, both reconstruction spaces in $\mathbb{CW}_0$ and $\Lambda_0$ version will deliver poor numerical velocity. The velocity plots from $\Lambda_0$ setting are shown in Figure~\ref{Num2-1}a-b and the velocity from $\mathbb{CW}_0$ is very similar and we omit the plotting here. Now we modify the body force simulation by employing the velocity reconstruction in the $\mathbb{CW}_0$ and $\Lambda_0$ spaces to retrieve the pressure robustness. Due to the invariant of pressure, we expect the numerical velocity is close to zero. The results by employing $\mathbb{CW}_0$ space are plotted in Figure~\ref{Num2-2}. As one can observe that on the same level of the mesh, the magnitude of velocity is reduce from the order $\mathcal{O}(10^{-3})$ to $\mathcal{O}(10^{-7})$. The difference from zero flow is due to the inaccuracy for the involving rational functions in the integration for the $\mathbb{CW}_0$ space. In contrast, as employing the velocity reconstruction operator in the proposed $\Lambda_0$ space consisting piecewise polynomials, we overcome the limitation generated by rational functions and retrieve enhanced velocity simulation with the magnitude at the order $\mathcal{O}(10^{-18})$.  This example justified one advantage by our scheme than the work in \cite{mu}.

\subsection{Simulations with Higher Order WG Element}
In this section, we shall perform the WG algorithm with high order WG elements and validate our theoretical conclusions.
\subsubsection{Convergence test - smooth solutions}\label{Num-3-1}
Let $\Omega = (0,1)^2$ and we choose the following analytical solutions for the convergence test:
\begin{eqnarray*}
\bu = \begin{pmatrix}
\sin(\pi x)\sin(\pi y)\\
\cos(\pi x)\cos(\pi y)
\end{pmatrix},\ p(x,y) = 2\cos(\pi x)\sin(\pi y).
\end{eqnarray*}

The performance on a sequence of  deformed rectangular mesh (similar as Figure~\ref{Fig:Num2-grid}a)  is summarized as below.
\begin{itemize}
\begin{table}[H]
\caption{Example~\ref{Num-3-1}. Error profiles and convergence results for $\nu = 1$ on deformed rectangular mesh.}\label{Tab:Ex3-nu1}
\tabcolsep=2pt
\begin{tabular}{c||cc|cc|cc||cc|cc|cc}\hline\hline
&\multicolumn{6}{c||}{Non-Pressure Robust Algorithm~\ref{Algorithm:WG2} }&\multicolumn{6}{c}{Pressure Robust Algorithm~\ref{Algorithm:WG1}	}	\\ \hline
$1/h$&$\3bar\be_h\3bar$& rate &$\|\be_0\|$& rate &$\|\epsilon_h\|$& rate	&$\3bar\be_h\3bar$& rate &$\|\be_0\|$& rate &$\|\epsilon_h\|$& rate	 \\ \hline
&\multicolumn{12}{c}{$k$ = 0}	\\ \hline	
4	&5.01E-1	 &	&3.16E-2	& 	&3.11E-1	&      &1.01	& 	&7.00E-2& 	     &1.15&\\	
8	&2.72E-1	&0.9	&1.03E-2	&1.6	&2.64E-1	&0.2	&5.84E-1	&0.8	&2.36E-2&	1.6&	8.21E-1&	0.5\\
16	&1.41E-1	&0.9	&2.97E-3	&1.8	&1.60E-1	&0.7	&3.10E-1	&0.9	&7.02E-3&	1.7&	4.87E-1&	0.8\\
32	&7.16E-2	&1.0	&7.86E-4	&1.9	&8.53E-2	&0.9	&1.59E-1	&1.0	&1.88E-3&	1.9&	2.62E-1&	0.9\\
64	&3.60E-2	&1.0	&2.00E-4	&2.0	&4.37E-2	&1.0	&8.00E-2	&1.0	&4.82E-4&	2.0&	1.36E-1&	1.0\\ \hline
&\multicolumn{12}{c}{$k$ = 1}	\\ \hline									
4	&8.02E-2	& 	&2.19E-3	& 	&5.79E-2	&	&2.15E-1  &     &6.61E-3&	 	&7.80E-2	\\
8	&2.31E-2	&1.8	&2.70E-4	&3.0	&9.57E-3	&2.6	&5.84E-2	&1.9&8.71E-4&	2.9	&1.06E-2	&2.9\\
16	&6.12E-3	&1.9	&3.35E-5	&3.0	&1.40E-3	&2.8	&1.50E-2	&2.0&1.11E-4&	3.0	&1.35E-3	&3.0\\
32	&1.57E-3	&2.0	&4.20E-6	&3.0	&2.07E-4	&2.8	&3.77E-3	&2.0&1.40E-5&	3.0	&1.70E-4	&3.0\\
64	&3.97E-4	&2.0	&5.29E-7	&3.0	&3.19E-5	&2.7	&9.45E-4	&2.0&1.75E-6&	3.0	&2.14E-5	&3.0\\ \hline
&\multicolumn{12}{c}{$k$ = 2}	\\ \hline											4	&9.03E-3	& 	&1.79E-4	& 	&4.01E-3&           &2.57E-2&	& 	5.10E-4&	 	&5.84E-3	&\\
8	&1.22E-3	&2.9	&1.05E-5	&4.1	&5.56E-4&	2.8&	3.34E-3	&2.9	&3.23E-5	&4.0	&5.56E-4	&3.4\\
16	&1.58E-4	&2.9	&6.44E-7	&4.0	&7.02E-5&	3.0&	4.23E-4	&3.0	&2.04E-6	&4.0	&6.84E-5	&3.0\\
32	&2.01E-5	&3.0	&4.00E-8	&4.0	&8.71E-6&	3.0&	5.32E-5	&3.0	&1.28E-7	&4.0	&8.52E-6	&3.0\\
64	&2.54E-6	&3.0	&2.50E-9	&4.0	&1.08E-6&	3.0&	6.66E-6	&3.0	&8.00E-9	&4.0	&1.06E-6	&3.0\\ \hline
&\multicolumn{12}{c}{$k$ = 3}	\\ \hline												4	&7.96E-4	& 	&1.42E-5	& 	&2.54E-4&    &2.42E-3&	 	&4.23E-5&	 &	6.47E-4&	\\
8	&5.32E-5	&3.9	&4.86E-7	&4.9	&1.28E-5&4.3&	1.59E-4&	3.9	&1.42E-6&	4.9&	3.11E-5&	4.4\\
16	&3.43E-6	&4.0	&1.58E-8	&4.9	&6.69E-7&4.3&	1.01E-5&	4.0	&4.52E-8&	5.0&	1.59E-6&	4.3\\
32	&2.18E-7	&4.0	&5.01E-10&5.0	&3.74E-8&4.2&	6.36E-7&	4.0	&1.42E-9&	5.0&	8.83E-8&	4.2\\
64	&1.37E-8	&4.0	&1.58E-11	&5.0	&2.20E-9&4.1	&3.99E-8&	4.0	&4.45E-11	&5.0&	5.17E-9&	4.1\\ \hline
&\multicolumn{12}{c}{$k$ = 4}	\\ \hline												
4	&5.84E-5	& 	&9.04E-7	& 	&1.27E-5&		&1.86E-4&	 	&2.67E-6&	 	&3.57E-5	&\\
8	&1.88E-6	&5.0	&1.44E-8	&6.0	&4.48E-7&	4.8	&5.91E-6&	5.0	&4.27E-8&	6.0	&9.45E-7	&5.2\\
16	&5.95E-8	&5.0	&2.27E-10	&6.0	&1.42E-8&	5.0	&1.86E-7&	5.0	&6.72E-10&	6.0	&2.79E-8	&5.1\\
32	&1.87E-9	&5.0	&3.56E-12	&6.0	&4.36E-10&	5.0	&5.83E-9&	5.0	&1.05E-11	&     6.0	&8.40E-10	&5.1\\ \hline\hline
\end{tabular}
\end{table}
\item \textbf{Test case with $\nu = 1$ on the deformed rectangular meshes.} First the non-pressure robust Algorithm \ref{Algorithm:WG2} and pressure robust Algorithm \ref{Algorithm:WG1} have been carried out for various WG elements with $k=0,1,2,3,4$. Table~\ref{Tab:Ex3-nu1} reports the error profiles and convergence results. It can be seen from this table that the velocity errors measured in $\3bar\cdot\3bar$-norm and $L^2$-norm converge at the orders $\mathcal{O}(h^{k+1})$ and $\mathcal{O}(h^{k+2})$, which is one order higher than the optimal rate in convergence. The pressure error measured in $L^2$-norm converges at least $\mathcal{O}(h^{k+1})$. The errors from these two algorithms are the same magnitude.

\begin{table}[H]
\caption{Example~\ref{Num-1}. Error profiles and convergence results for $\nu = 1$E-2 on deformed rectangular mesh.}\label{Tab:Ex3-nu1e-2}
\tabcolsep=2pt
\begin{tabular}{c||cc|cc|cc||cc|cc|cc}\hline\hline
&\multicolumn{6}{c||}{Non-Pressure Robust Algorithm~\ref{Algorithm:WG2} }&\multicolumn{6}{c}{Pressure Robust Algorithm~\ref{Algorithm:WG1}	}	\\ \hline
$1/h$&$\3bar\be_h\3bar$& rate &$\|\be_0\|$& rate &$\|\epsilon_h\|$& rate	&$\3bar\be_h\3bar$& rate &$\|\be_0\|$& rate &$\|\epsilon_h\|$& rate	 \\ \hline
&\multicolumn{12}{c}{$k$ = 0}	\\ \hline
4	&3.46E+1&	 	&3.01&	 	&1.84E-1&		&1.01&	 	&7.00E-2&	 	&1.15E-2&	\\
8	&2.06E+1&	0.7	&1.03&	1.6	&1.58E-1&	0.2	&5.84E-1&	0.8	&2.36E-2&	1.6	&8.21E-3&	0.5\\
16	&1.10E+1&	0.9	&2.91E-1&	1.8	&1.26E-1&	0.3	&3.10E-1&	0.9	&7.02E-3&	1.7	&4.87E-3&	0.8\\
32	&5.61&	1.0	&7.65E-2&	1.9	&7.61E-2&	0.7	&1.59E-1&	1.0	&1.88E-3&	1.9	&2.62E-3&	0.9\\
64	&2.83&	1.0	&1.94E-2&	2.0	&4.14E-2&	0.9	&8.00E-2&	1.0	&4.82E-4&	2.0	&1.36E-3&	1.0\\ \hline
	&\multicolumn{12}{c}{$k$ = 1}	\\ \hline											
4	&7.39&	 	&2.05E-1&	 	&4.12E-2&		&2.15E-1&	 	&6.61E-3&	 	&7.80E-4	&\\
8	&2.12&	1.8	&2.47E-2&	3.1	&7.51E-3&	2.5	&5.84E-2&	1.9	&8.71E-4&	2.9	&1.06E-4	&2.9\\
16	&5.57E-1&	1.9	&3.01E-3&	3.0	&1.15E-3&	2.7	&1.50E-2&	2.0	&1.11E-4&	3.0	&1.35E-5	&3.0\\
32	&1.42E-1&	2.0	&3.76E-4&	3.0	&1.69E-4&	2.8	&3.77E-3&	2.0	&1.40E-5&	3.0	&1.70E-6	&3.0\\
64	&3.59E-2&	2.0	&4.72E-5&	3.0	&2.55E-5&	2.7	&9.45E-4&	2.0	&1.75E-6&	3.0	&2.14E-7	&3.0\\ \hline
&\multicolumn{12}{c}{$k$ = 2}	\\ \hline												
4	&8.58E-1&	 	&1.61E-2&	 	&2.68E-3&		&2.57E-2&	 	&5.10E-4&	 	&5.84E-5	&\\
8	&1.14E-1&	2.9	&9.47E-4&	4.1	&4.48E-4&	2.6	&3.34E-3&	2.9	&3.23E-5&	4.0	&5.56E-6	&3.4\\
16	&1.46E-2&	3.0	&5.76E-5&	4.0	&6.00E-5&	2.9	&4.23E-4&	3.0	&2.04E-6&	4.0	&6.84E-7	&3.0\\
32	&1.85E-3&	3.0	&3.57E-6&	4.0	&7.58E-6&	3.0	&5.32E-5&	3.0	&1.28E-7&	4.0	&8.52E-8	&3.0\\
64	&2.33E-4&	3.0	&2.22E-7&	4.0	&9.47E-7&	3.0	&6.66E-6&	3.0	&8.00E-9&	4.0	&1.06E-8	&3.0\\ \hline
&\multicolumn{12}{c}{$k$ = 3}	\\ \hline												
4	&7.64E-2&	 	&1.37E-3&	 	&2.46E-4&		&2.42E-3&	 	&4.23E-5&	 	&6.47E-6	&\\
8	&5.12E-3&	3.9	&4.70E-5&	4.9	&1.20E-5&	4.4	&1.59E-4&	3.9	&1.42E-6&	4.9	&3.11E-7	&4.4\\
16	&3.29E-4&	4.0	&1.52E-6&	4.9	&6.29E-7&	4.2	&1.01E-5&	4.0	&4.52E-8&	5.0	&1.59E-8	&4.3\\
32	&2.08E-5&	4.0	&4.83E-8&	5.0	&3.59E-8&	4.1	&6.36E-7&	4.0	&1.42E-9&	5.0	&8.83E-10	&4.2\\
64	&1.31E-6&	4.0	&1.52E-9&	5.0	&2.14E-9&	4.1	&3.99E-8&	4.0	&4.45E-11	&5.0	&5.17E-11	&4.1\\ \hline
&\multicolumn{12}{c}{$k$ = 4}	\\ \hline												
4	&5.65E-3&	 	&8.69E-5&	 	&1.27E-5&		&1.86E-4&	 	&2.67E-6&	 	&3.57E-7	&\\
8	&1.83E-4&	4.9	&1.40E-6&	6.0	&4.13E-7&	4.9	&5.91E-6&	5.0	&4.27E-8&	6.0	&9.45E-9	&5.2\\
16	&5.79E-6&	5.0	&2.21E-8&	6.0	&1.30E-8&	5.0	&1.86E-7&	5.0	&6.72E-10&	6.0	&2.79E-10	&5.1\\
32	&1.82E-7&	5.0	&3.47E-10&	6.0	&3.99E-10&	5.0	&5.83E-9&	5.0	&1.05E-11	&6.0	&8.47E-12	&5.0
\\ \hline\hline
\end{tabular}
\end{table}
\item\textbf{Test case with $\nu$=1E-2 and 1E-4 on the deformed rectangular meshes.} The numerical performance corresponding to small values in $\nu$ is reported in Table~\ref{Tab:Ex3-nu1e-2}-\ref{Tab:Ex3-nu1e-4}. As one reduces the value in $\nu$, the velocity error produced by non-pressure robust scheme is increasing by a factor $1/\nu$, though preserves the expected convergence rate. In contrast, the velocity error produced by the pressure robust scheme preserve the accuracy and convergence order. Thus our proposed Algorithm~\ref{Algorithm:WG1} shows the significant enhancement in velocity simulation. For the pressure simulation, we observe opposite results. As we decreasing the values in $\nu$, the pressure errors produced by Algorithm~\ref{Algorithm:WG2} remain the same magnitude. However, the pressure errors generated by pressure-robust scheme Algorithm~\ref{Algorithm:WG1} is also reduced by a factor $\nu$.

\begin{table}[H]
\caption{Example~\ref{Num-1}. Error profiles and convergence results for $\nu = 1$E-4 on deformed rectangular mesh.}\label{Tab:Ex3-nu1e-4}
\tabcolsep=2pt
\begin{tabular}{c||cc|cc|cc||cc|cc|cc}\hline\hline
&\multicolumn{6}{c||}{Non-Pressure Robust Algorithm~\ref{Algorithm:WG2} }&\multicolumn{6}{c}{Pressure Robust Algorithm~\ref{Algorithm:WG1}	}	\\ \hline
$1/h$&$\3bar\be_h\3bar$& rate &$\|\be_0\|$& rate &$\|\epsilon_h\|$& rate	&$\3bar\be_h\3bar$& rate &$\|\be_0\|$& rate &$\|\epsilon_h\|$& rate	 \\ \hline
&\multicolumn{12}{c}{$k$ = 0}	\\ \hline
4	&3.46E+3&	 	&3.01E+2&	 	&1.86E-1&		&1.01&	 	&7.00E-2&	 	&1.15E-4&	\\
8	&2.06E+3&	0.7	&1.03E+2&	1.6	&1.57E-1&	0.2	&5.84E-1&	0.8	&2.36E-2&	1.6	&8.21E-5&	0.5\\
16	&1.10E+3&	0.9	&2.91E+1&	1.8	&1.26E-1&	0.3	&3.10E-1&	0.9	&7.02E-3&	1.7	&4.87E-5&	0.8\\
32	&5.61E+2&	1.0	&7.65&	1.9	&7.60E-2&	0.7	&1.59E-1&	1.0	&1.88E-3&	1.9	&2.62E-5&	0.9\\
64	&2.83E+2&	1.0	&1.94&	2.0	&4.13E-2&	0.9	&8.00E-2&	1.0	&4.82E-4&	2.0	&1.36E-5&	1.0\\ \hline
&\multicolumn{12}{c}{$k$ = 1}	\\ \hline								
4	&7.40E+2&	 	&2.05E+1&	 	&4.10E-2&		&2.15E-1&	 	&6.61E-3&	 	&7.80E-6&	\\
8	&2.12E+2&	1.8	&2.47&	3.1	&7.49E-3&	2.5	&5.84E-2&	1.9	&8.71E-4&	2.9	&1.06E-6&	2.9\\
16	&5.57E+1&	1.9	&3.01E-1&	3.0	&1.15E-3&	2.7	&1.50E-2&	2.0	&1.11E-4	&      3.0	&1.35E-7&	3.0\\
32	&1.42E+1&	2.0	&3.76E-2&	3.0	&1.69E-4&	2.8	&3.77E-3&	2.0	&1.40E-5&	3.0	&1.70E-8&	3.0\\
64	&3.59E&	2.0	&4.72E-3&	3.0	&2.55E-5&	2.7	&9.45E-4&	2.0	&1.75E-6&	3.0	&2.14E-9&	3.0\\ \hline
&\multicolumn{12}{c}{$k$ = 2}	\\ \hline												
4	&8.58E+1&	 	&1.61&	 	&2.67E-3&		&2.57E-2&	 	&5.10E-4&	 	&5.84E-7&	 \\
8	&1.14E+1&	2.9	&9.46E-2&	4.1	&4.47E-4&	2.6	&3.34E-3&	2.9	&3.23E-5&	4.0	&5.56E-8&	3.4\\
16	&1.46E&	3.0	&5.76E-3&	4.0	&5.99E-5&	2.9	&4.23E-4&	3.0	&2.04E-6&	4.0	&6.84E-9&	3.0\\
32	&1.85E-1&	3.0	&3.57E-4&	4.0	&7.57E-6&	3.0	&5.32E-5&	3.0	&1.28E-7&	4.0	&8.52E-10&	3.0\\
64	&2.33E-2&	3.0	&2.22E-5&	4.0	&9.45E-7&	3.0	&6.66E-6&	3.0	&8.00E-9&	4.0	&1.06E-10&	3.0\\ \hline
&\multicolumn{12}{c}{$k$ = 3}	\\ \hline												
4	&7.64E&	 	&1.37E-1&	 	&2.46E-4&		&2.42E-3&	 	&4.23E-5&	 	&6.47E-8&	\\
8	&5.12E-1&	3.9	&4.70E-3&	4.9	&1.20E-5&	4.4	&1.59E-4&	3.9	&1.42E-6&	4.9	&3.11E-9	&	4.4\\
16	&3.29E-2&	4.0	&1.52E-4&	4.9	&6.29E-7&	4.2	&1.01E-5&	4.0	&4.52E-8&	5.0	&1.59E-10&	4.3\\
32	&2.08E-3&	4.0	&4.83E-6&	5.0	&3.59E-8&	4.1	&6.36E-7&	4.0	&1.42E-9&	5.0	&8.84E-12&	4.2\\
64	&1.31E-4&	4.0	&1.52E-7&	5.0	&2.14E-9&	4.1	&3.99E-8&	4.0	&4.45E-11	&	5.0	&5.18E-13&	4.1\\ \hline
&\multicolumn{12}{c}{$k$ = 4}	\\ \hline												
4	&5.65E-1&	 	&8.69E-3&	 	&1.27E-5&		&1.86E-4&	 	&2.67E-6&	 	&3.57E-9&	\\
8	&1.83E-2&	4.9	&1.40E-4&	6.0	&4.13E-7&	4.9	&5.91E-6&	5.0	&4.27E-8&	6.0	&9.45E-11	&	5.2\\
16	&5.79E-4&	5.0	&2.21E-6&	6.0	&1.29E-8&	5.0	&1.86E-7&	5.0	&6.72E-10&	6.0	&2.82E-12&	5.1\\
32	&1.82E-5&	5.0	&3.47E-8&	6.0	&3.98E-10&	5.0	&5.83E-9&	5.0	&1.55E-11&	5.4	&1.14E-13&	4.6
\\ \hline\hline
\end{tabular}
\end{table}

\end{itemize}

Next, two algorithms are performed on a sequence of polygonal mesh (Mesh Level 3 is shown in Figure \ref{Fig:Num2-grid}b). Similar numerical conclusions as follows can be obtained. 
\begin{itemize}
\item \textbf{Test case with $\nu = 1$ on polygonal mesh.} All errors generated by Algorithm~\ref{Algorithm:WG1} and Algorithm~\ref{Algorithm:WG2} converge at the expected order. Though some inconsistency error involved in Algorithm~\ref{Algorithm:WG1}, the numerical errors are at comparable magnitude. This observation validate the accuracy of our proposed scheme.
\begin{table}[H] 
\caption{Example~\ref{Num-1}. Error profiles and convergence results for $\nu = 1$ on the polygonal grids.}\label{Tab:Ex3-nu1-case2}
\tabcolsep=1pt
\begin{tabular}{c||cc|cc|cc||cc|cc|cc}\hline\hline
&\multicolumn{6}{c||}{Non-Pressure Robust Algorithm~\ref{Algorithm:WG2} }&\multicolumn{6}{c}{Pressure Robust Algorithm~\ref{Algorithm:WG1}	}	\\ \hline
Mesh&$\3bar\be_h\3bar$& rate &$\|\be_0\|$& rate &$\|\epsilon_h\|$& rate	&$\3bar\be_h\3bar$& rate &$\|\be_0\|$& rate &$\|\epsilon_h\|$& rate	 \\ \hline
&\multicolumn{12}{c}{$k$ = 0}	\\ \hline
Level 1 &6.92E-1&	 	&5.22E-2&	 	&1.55E-1&		&1.27&	 	&9.21E-2&	 	&1.29&	\\
Level 2 &3.61E-1&	0.9	&1.38E-2&	1.9	&1.37E-1&	0.2	&7.93E-1&	0.7	&2.98E-2&	1.6	&4.18E-1	&1.6\\
Level 3 &1.76E-1&	1.0	&3.60E-3&	1.9	&6.40E-2&	1.1	&4.18E-1&	0.9	&9.12E-3&	1.7	&1.41E-1	&1.6\\
Level 4 &8.67E-2&	1.0	&8.66E-4&	2.1	&2.16E-2&	1.6	&2.17E-1&	0.9	&2.48E-3&	1.9	&8.01E-2	&0.8\\
Level 5 &4.33E-2&	1.0	&2.17E-4&	2.0	&7.64E-3&	1.5	&1.10E-1&	1.0	&6.40E-4&	2.0	&1.39E-2	&2.5\\ \hline
&\multicolumn{12}{c}{$k$ = 1}	\\ \hline											
Level 1 &6.21E-2&	 	&1.89E-3&	 	&4.10E-2&		&1.85E-1&	 	&5.86E-3&	 	&6.04E-2&	\\
Level 2 &1.59E-2&	2.0	&2.28E-4&	3.0	&2.88E-3&	3.8	&5.41E-2&	1.8	&5.41E-4&	3.4	&1.09E-2&	2.5\\
Level 3 &3.92E-3&	2.0	&2.53E-5&	3.2	&3.44E-4&	3.1	&1.34E-2&	2.0	&7.03E-5&	2.9	&2.38E-3&	2.2\\
Level 4 &1.02E-3&	1.9	&3.11E-6&	3.0	&8.80E-5&	2.0	&3.44E-3&	2.0	&8.88E-6&	3.0	&8.50E-4&	1.5\\
Level 5 &2.49E-4&	2.0	&3.90E-7&	3.0	&2.03E-5&	2.1	&8.68E-4&	2.0	&1.10E-6&	3.0	&1.45E-4&	2.6\\ \hline
&\multicolumn{12}{c}{$k$ = 2}	\\ \hline											
Level 1 &8.11E-3&	 	&1.52E-4&	 	&4.18E-3&		&2.18E-2&	 	&4.86E-4&	 	&5.96E-3&	\\
Level 2 &8.57E-4&	3.2	&9.25E-6&	4.0	&3.40E-4&	3.6	&2.89E-3&	2.9	&2.53E-5&	4.3	&6.76E-4&	3.1\\
Level 3 &1.06E-4&	3.0	&5.40E-7&	4.1	&2.24E-5&	3.9	&3.60E-4&	3.0	&1.74E-6&	3.9	&8.23E-5&	3.0\\
Level 4 &1.28E-5&	3.1	&3.37E-8&	4.0	&3.73E-6&	2.6	&4.61E-5&	3.0	&1.16E-7&	3.9	&1.23E-5&	2.7\\
Level 5 &1.57E-6&	3.0	&2.09E-9&	4.0	&2.27E-7&	4.0	&5.90E-6&	3.0	&7.49E-9&	4.0	&1.25E-6&	3.3\\ \hline
&\multicolumn{12}{c}{$k$ = 3}	\\ \hline											
Level 1 &6.31E-4&		&1.12E-5&	 	&1.81E-4&		&1.85E-3&	 	&3.30E-5&	 	&5.73E-4&	\\
Level 2 &3.29E-5&	4.3	&2.95E-7&	5.2	&1.05E-5&	4.1	&1.27E-4&	3.9	&9.89E-7&	5.1	&3.03E-5&	4.2\\
Level 3 &1.98E-6&	4.1	&8.50E-9&	5.1	&5.65E-7&	4.2	&7.75E-6&	4.0	&2.95E-8&	5.1	&1.73E-6&	4.1\\
Level 4 &1.23E-7&	4.0	&2.61E-10&	5.0	&1.56E-8&	5.2	&5.00E-7&	4.0	&9.57E-10&	4.9	&1.06E-7&	4.0\\
Level 5 &7.63E-9&	4.0	&7.95E-12&	5.0	&1.27E-9&	3.6	&3.08E-8&	4.0	&2.92E-11&	5.0	&6.42E-9&	4.0
\\ \hline\hline
\end{tabular}
\end{table}
\item \textbf{Test case with $\nu = 1$E-2 and 1E-4 on polygonal mesh.} As in the small viscosity case, the accuracy in Algorithm~\ref{Algorithm:WG2} is destroyed with largely increasing error though still produce the expected convergence order. However, by our enhanced numerical discretization Algorithm~\ref{Algorithm:WG1}, we preserve the accuracy and order in velocity and improve the simulation in pressure along with decreasing the values in $\nu$. This test validate our proposed scheme in the high order computational scheme based on the polygonal mesh.
\begin{table}[H]
\caption{Example~\ref{Num-1}. Error profiles and convergence results for $\nu = 1$E-2 on the polygonal grids.}\label{Tab:Ex3-nu1E-2-case2}
\tabcolsep=1pt
\begin{tabular}{c||cc|cc|cc||cc|cc|cc}\hline\hline
&\multicolumn{6}{c||}{Non-Pressure Robust Algorithm~\ref{Algorithm:WG2} }&\multicolumn{6}{c}{Pressure Robust Algorithm~\ref{Algorithm:WG1}	}	\\ \hline
Mesh&$\3bar\be_h\3bar$& rate &$\|\be_0\|$& rate &$\|\epsilon_h\|$& rate	&$\3bar\be_h\3bar$& rate &$\|\be_0\|$& rate &$\|\epsilon_h\|$& rate	 \\ \hline
&\multicolumn{12}{c}{$k$ = 0}	\\ \hline
Level 1 &3.82E+1&	 	&3.73&	 	&2.21E-1&		&1.27&	 	&9.21E-2&	 	&1.29E-2&	\\
Level 2 &2.20E+1&	0.8	&1.17&	1.7	&1.49E-1&	0.6	&7.93E-1&	0.7	&2.98E-2&	1.6	&4.18E-3&	1.6\\
Level 3 &1.16E+1&	0.9	&3.21E-1&	1.9	&5.72E-2&	1.4	&4.18E-1&	0.9	&9.12E-3&	1.7	&1.41E-3&	1.6\\
Level 4 &5.91&	1.0	&8.37E-2&	1.9	&1.64E-2&	1.8	&2.17E-1&	0.9	&2.48E-3&	1.9	&8.01E-4&	0.8\\
Level 5 &2.98&	1.0	&2.11E-2	&2.0	&3.04E-3&	2.4	&1.10E-1&	1.0	&6.40E-4&	2.0	&1.39E-4&	2.5\\ \hline
&\multicolumn{12}{c}{$k$ = 1}	\\ \hline											
Level 1 &6.37&	 	&1.74E-1&	 	&2.41E-2&		&1.85E-1&	 	&5.86E-3&	 	&6.04E-4&	\\
Level 2 &1.51&	2.1	&1.97E-2&	3.1	&1.61E-3&	3.9	&5.41E-2&	1.8	&5.41E-4&	3.4	&1.09E-4&	2.5\\
Level 3 &3.81E-1&	2.0	&2.26E-3&	3.1	&2.07E-4&	3.0	&1.34E-2&	2.0	&7.03E-5&	2.9	&2.38E-5&	2.2\\
Level 4 &9.46E-2&	2.0	&2.80E-4&	3.0	&3.77E-5&	2.5	&3.44E-3&	2.0	&8.88E-6&	3.0	&8.50E-6&	1.5\\
Level 5 &2.34E-2&	2.0	&3.48E-5&	3.0	&8.58E-6&	2.1	&8.68E-4&	2.0	&1.10E-6&	3.0	&1.45E-6&	2.6\\ \hline
&\multicolumn{12}{c}{$k$ = 2}	\\ \hline											
Level 1 &7.29E-1&	 	&1.41E-2&	 	&2.03E-3&		&2.18E-2&	 	&4.86E-4&	 	&5.96E-5&	\\
Level 2 &7.86E-2&	3.2	&8.48E-4&	4.1	&2.27E-4&	3.2	&2.89E-3&	2.9	&2.53E-5&	4.3	&6.76E-6&	3.1\\
Level 3 &9.82E-3&	3.0	&5.09E-5&	4.1	&1.55E-5&	3.9	&3.60E-4&	3.0	&1.74E-6&	3.9	&8.23E-7&	3.0\\
Level 4 &1.20E-3&	3.0	&3.20E-6&	4.0	&3.77E-6&	2.0	&4.61E-5&	3.0	&1.16E-7&	3.9	&1.23E-7&	2.7\\
Level 5 &1.47E-4&	3.0	&2.01E-7&	4.0	&1.14E-7&	5.0	&5.90E-6&	3.0	&7.49E-9&	4.0	&1.25E-8&	3.3\\ \hline
&\multicolumn{12}{c}{$k$ = 3}	\\ \hline											
Level 1 &6.23E-2&	 	&1.06E-3&	 	&1.67E-4&		&1.85E-3&	 	&3.30E-5&	 	&5.73E-6&	\\
Level 2 &3.22E-3&	4.3	&2.85E-5&	5.2	&6.94E-6&	4.6	&1.27E-4&	3.9	&9.89E-7&	5.1	&3.03E-7&	4.2\\
Level 3 &1.96E-4&	4.0	&8.35E-7&	5.1	&3.80E-7&	4.2	&7.75E-6&	4.0	&2.95E-8&	5.1	&1.73E-8&	4.1\\
Level 4 &1.21E-5&	4.0	&2.57E-8&	5.0	&1.23E-8&	4.9	&5.00E-7&	4.0	&9.57E-10&	4.9	&1.06E-9&	4.0\\
Level 5 &7.34E-7&	4.0	&7.67E-10&	5.1	&7.20E-10&	4.1	&3.08E-8&	4.0	&2.92E-11&	5.0	&6.42E-11	&4.0
\\ \hline\hline
\end{tabular}
\end{table}
\begin{table}[H]
\caption{Example~\ref{Num-1}. Error profiles and convergence results for $\nu = 1$E-4 on the polygonal grids.}\label{Tab:Ex3-nu1E-4-case2}
\tabcolsep=2pt
\begin{tabular}{c||cc|cc|cc||cc|cc|cc}\hline\hline
&\multicolumn{6}{c||}{Non-Pressure Robust Algorithm~\ref{Algorithm:WG2} }&\multicolumn{6}{c}{Pressure Robust Algorithm~\ref{Algorithm:WG1}	}	\\ \hline
Mesh&$\3bar\be_h\3bar$& rate &$\|\be_0\|$& rate &$\|\epsilon_h\|$& rate	&$\3bar\be_h\3bar$& rate &$\|\be_0\|$& rate &$\|\epsilon_h\|$& rate	 \\ \hline
&\multicolumn{12}{c}{$k$ = 0}	\\ \hline
Level 1	&3.81E+3&	 	&3.71E+2&	 	&2.22E-1&		&1.27&	 	&9.21E-2&	 	&1.29E-4&\\	
Level 2	&2.19E+3&	0.8	&1.16E+2&	1.7	&1.49E-1&	0.6	&7.93E-1&	0.7	&2.98E-2&	1.6	&4.18E-5&	1.6\\
Level 3	&1.15E+3&	0.9	&3.20E+1&	1.9	&5.72E-2&	1.4	&4.18E-1&	0.9	&9.12E-3&	1.7	&1.41E-5&	1.6\\
Level 4	&5.91E+2&	1.0	&8.37&	1.9	&1.64E-2&	1.8	&2.17E-1&	0.9	&2.48E-3&	1.9	&8.01E-6&	0.8\\
Level 5	&2.97E+2&	1.0	&2.11&	2.0	&3.05E-3&	2.4	&1.10E-1&	1.0	&6.40E-4&	2.0	&1.39E-6&	2.5\\ \hline
&\multicolumn{12}{c}{$k$ = 1}	\\ \hline												
Level 1	&6.39E+2&	 	&1.74E+1&	 	&2.40E-2&		&1.85E-1&	 	&5.86E-3&	 	&6.04E-6&	\\
Level 2	&1.52E+2&	2.1	&1.97&	3.1	&1.61E-3&	3.9	&5.41E-2&	1.8	&5.41E-4&	3.4	&1.09E-6&	2.5\\
Level 3	&3.81E+1&	2.0	&2.26E-1&	3.1	&2.08E-4&	3.0	&1.34E-2&	2.0	&7.03E-5&	2.9	&2.38E-7&	2.2\\
Level 4	&9.47&	2.0	&2.80E-2&	3.0	&3.78E-5&	2.5	&3.44E-3&	2.0	&8.88E-6&	3.0	&8.50E-8&	1.5\\
Level 5	&2.34&	2.0	&3.48E-3&	3.0	&8.58E-6&	2.1	&8.68E-4&	2.0	&1.10E-6&	3.0	&1.45E-8&	2.6\\ \hline
&\multicolumn{12}{c}{$k$ = 2}	\\ \hline												
Level 1	&7.28E+1&	 	&1.41&	 	&2.01E-3&		&2.18E-2&	 	&4.86E-4&	 	&5.96E-7&	\\
Level 2	&7.86&	3.2	&8.48E-2&	4.1	&2.26E-4&	3.2	&2.89E-3&	2.9	&2.53E-5&	4.3	&6.76E-8&	3.1\\
Level 3	&9.82E-1&	3.0	&5.09E-3&	4.1	&1.54E-5&	3.9	&3.60E-4&	3.0	&1.74E-6&	3.9	&8.23E-9&	3.0\\
Level 4	&1.20E-1&	3.0	&3.20E-4&	4.0	&3.77E-6&	2.0	&4.61E-5&	3.0	&1.16E-7&	3.9	&1.23E-9&	2.7\\
Level 5	&1.47E-2&	3.0	&2.01E-5&	4.0	&1.13E-7&	5.1	&5.90E-6&	3.0	&7.49E-9&	4.0	&1.25E-10&	3.3\\ \hline
&\multicolumn{12}{c}{$k$ = 3}	\\ \hline												
Level 1	&6.23&	 	&1.06E-1&	 	&1.67E-4&		&1.85E-3&	 	&3.30E-5&	 	&5.73E-8&	\\
Level 2	&3.22E-1&	4.3	&2.85E-3&	5.2	&6.91E-6&	4.6	&1.27E-4&	3.9	&9.89E-7&	5.1	&3.03E-9&	4.2\\
Level 3	&1.96E-2&	4.0	&8.35E-5&	5.1	&3.78E-7&	4.2	&7.75E-6&	4.0	&2.95E-8&	5.1	&1.73E-10&	4.1\\
Level 4	&1.22E-3&	4.0	&2.57E-6&	5.0	&1.23E-8&	4.9	&5.00E-7&	4.0	&9.57E-10&	4.9	&1.06E-11	&     4.0
\\ \hline\hline
\end{tabular}
\end{table}
\end{itemize}

\subsubsection{Test - Low regularity in pressure}\label{Sect:Num-4}
In this test, let the Lshape domain be $\Omega = [-1,1]^2\backslash [0,1]\times[-1,0]$ and the exact solutions be chosen as follows:
\begin{eqnarray*}
\bu = \begin{pmatrix}
\sin(\pi x)\sin(\pi y),\\
\cos(\pi x)\cos(\pi y)
\end{pmatrix},\ p = r^{2/3}\sin(\frac{2\theta}{3}),
\end{eqnarray*}
where $r,\theta$ are in the polar coordinates. It is known that the velocity is smooth and the regularity of pressure is approximately $H^{1.67}$.

In this example, we shall employ the polygonal meshes to validate the numerical performance and the enhancement of pressure robustness. A sequence of meshes have been employed in the simulation. Figure~\ref{Num4-Grids} shows the first two levels of the meshes. The numerical experiments have been carried out based on the WG Algorithm \ref{Algorithm:WG1} (denoted as WG1) and WG Algorithm \ref{Algorithm:WG2} (denoted as WG2) for various polynomial degrees. 
Figure~\ref{Num4-pL2} plots the error profiles and convergence results for pressure. It is noted that by Algorithm~\ref{Algorithm:WG2}, the error, measured in $\|\epsilon_h\|$, converges at the order $\mathcal{O}(h^{\min(k+1,2)})$, which is limited by the regularity of pressure. However, we can break such limitation induced by low pressure regularity suggested as Algorithm~\ref{Algorithm:WG1}. The numerical pressure error produced by Algorithm~\ref{Algorithm:WG1} converges to 0 at the order $\mathcal{O}(h^{k+1})$.

\begin{figure}[H]
\centering
\begin{tabular}{cc}
\includegraphics[width=0.4\textwidth,height=.4\textwidth]{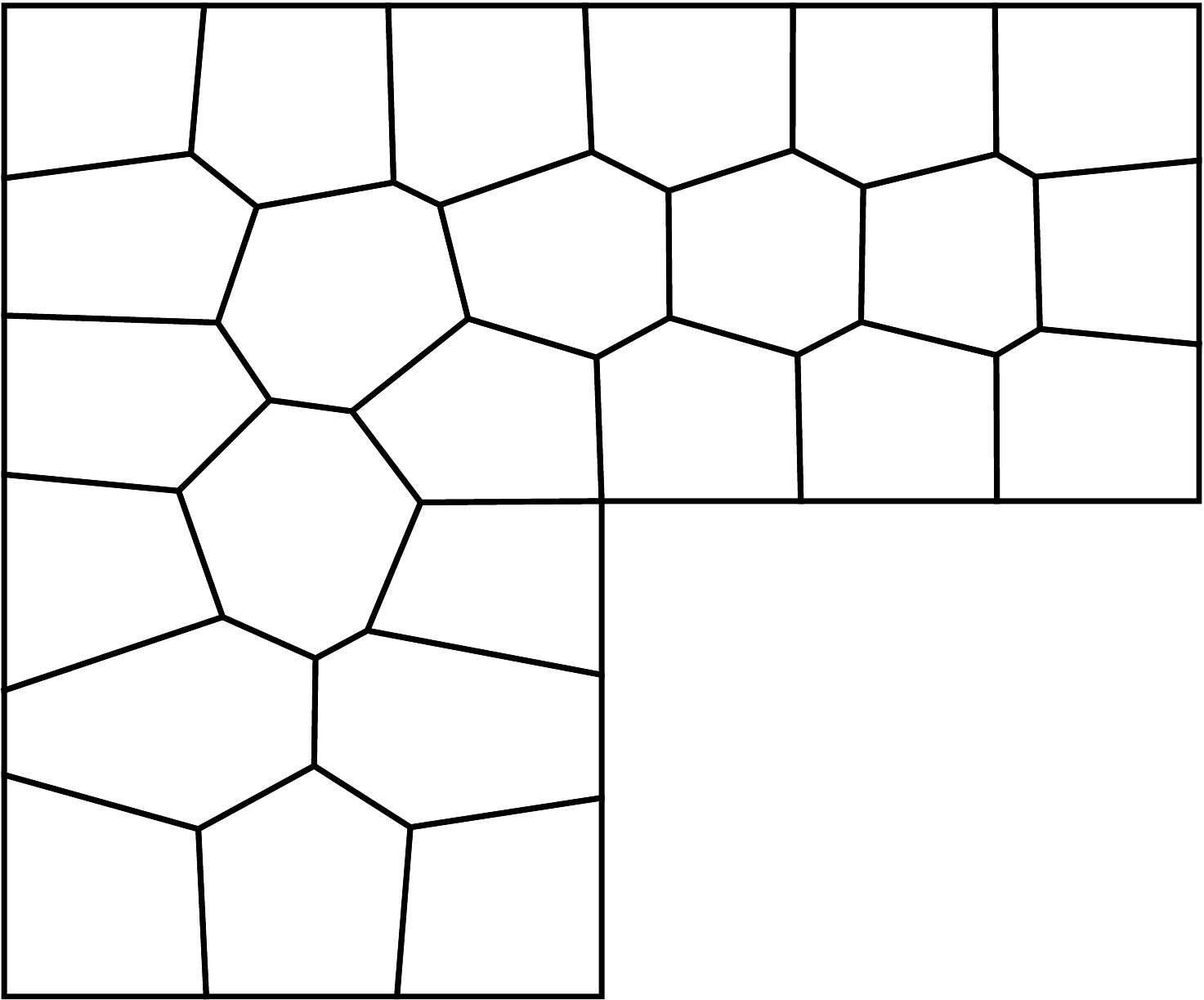}
&\includegraphics[width=0.4\textwidth,height=.4\textwidth]{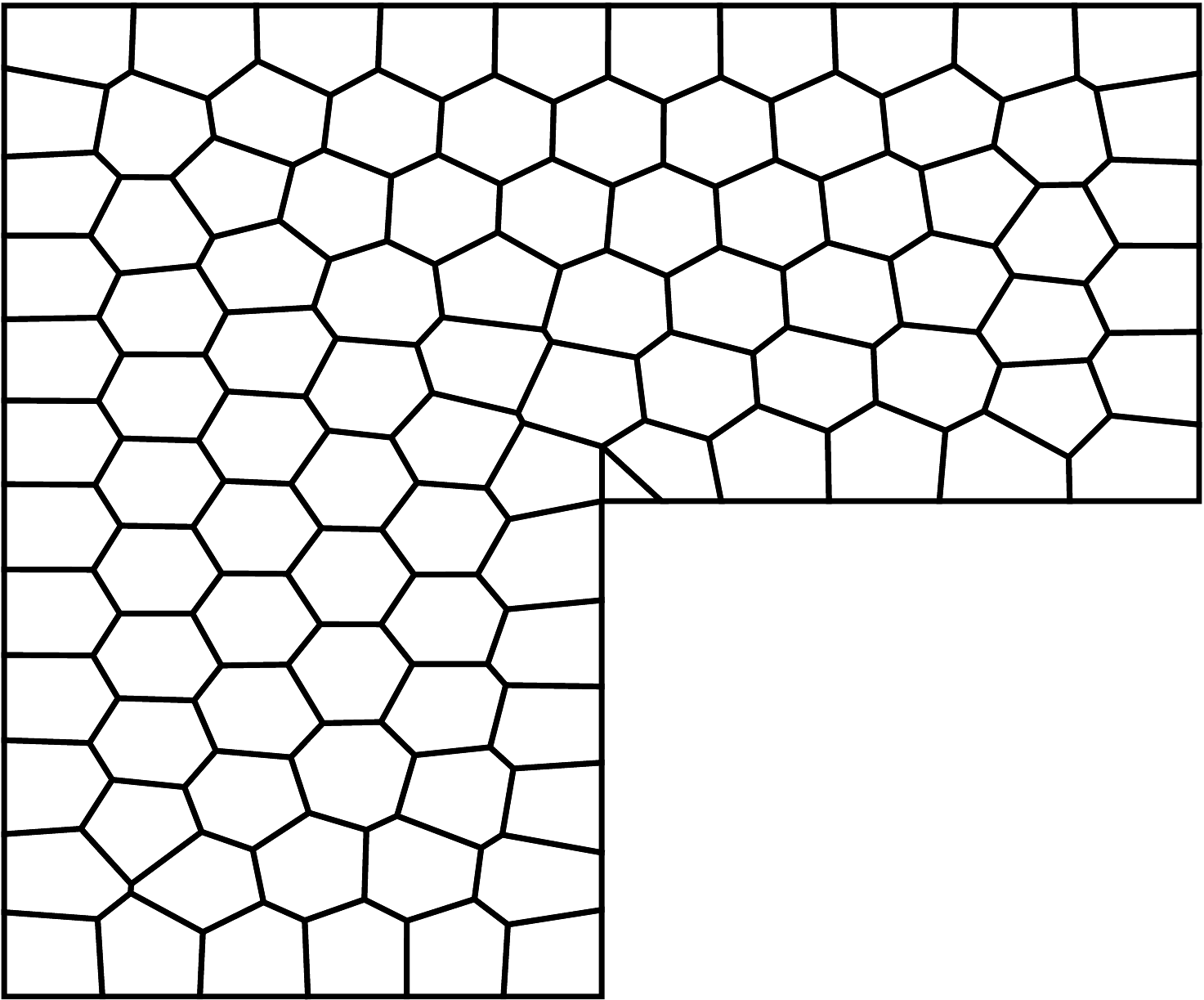}
\\
(a) & (b) 
\end{tabular}
\caption{Test~\ref{Sect:Num-4}. Illustration of computational polygonal grids: (a) Level 1; (b) Level 2.}\label{Num4-Grids}
\end{figure}

\begin{figure}[H]
\centering
\includegraphics[width=0.7\textwidth]{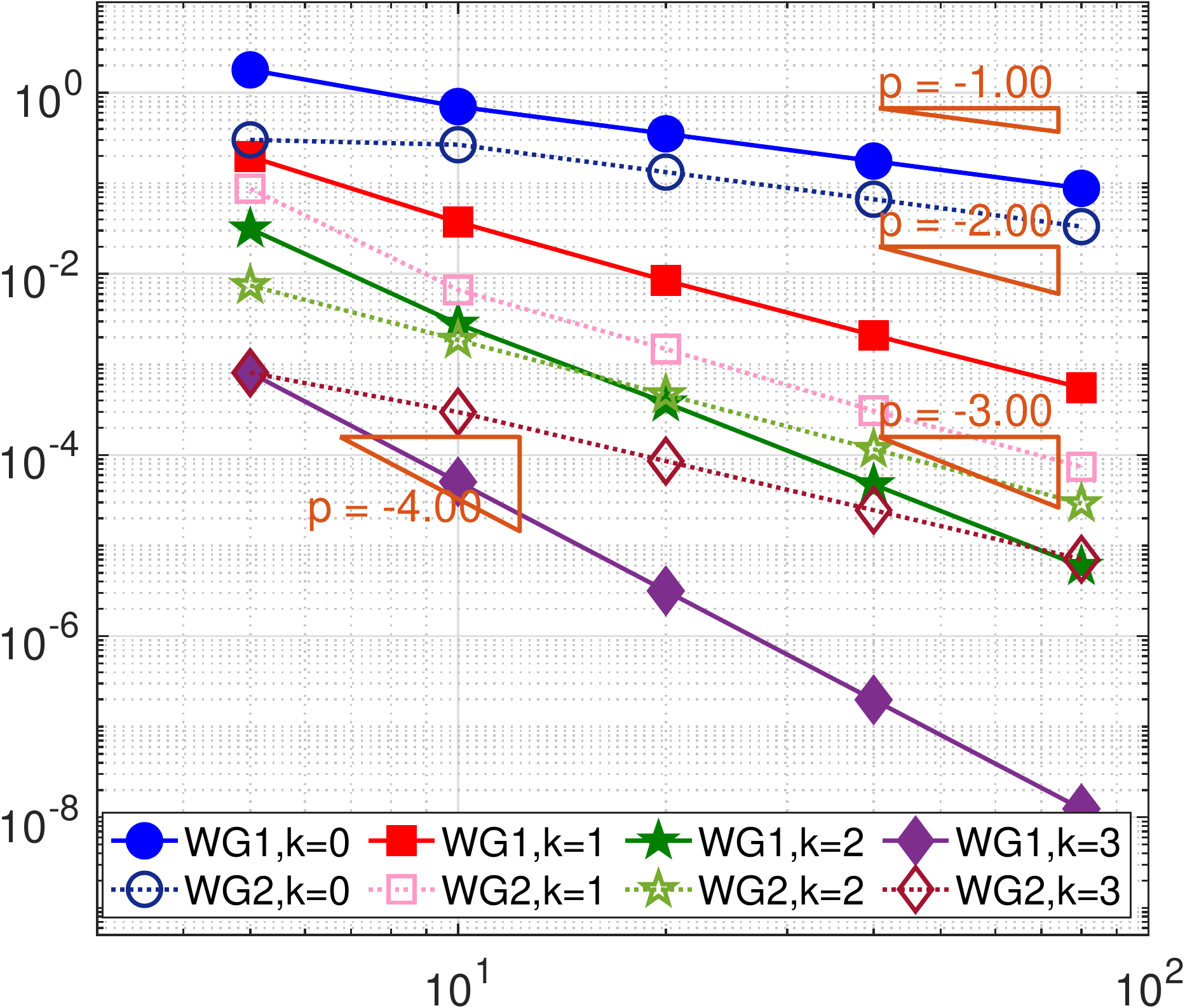}
\caption{Test~\ref{Sect:Num-4}. Convergence test for pressure error in $\|\mathcal{Q}_hp-p_h\|$-norm.}\label{Num4-pL2}
\end{figure}

The velocity error profiles and convergence results are demonstrated in Figure~\ref{Num4-uErr}. The solid lines plot the velocity error by Algorithm~\ref{Algorithm:WG1}, measured in $\3bar\be_h\3bar$ and $\|\be_h\|$, respectively. As one can see from these two figures that the convergence rates are $\mathcal{O}(h^{k+1})$ and $\mathcal{O}(h^k)$ though we have low regularity in pressure. However, the convergence rates produced by Algorithm~\ref{Algorithm:WG2} are limited by the regularity in pressure and the velocity errors converge at the order $\mathcal{O}(h^{\min(k+1,2)})$ and $\mathcal{O}(h^{\min(k+2,3)}$, respectively. Again, the results confirm the invariant error estimate with respect to pressure. This test shows that pressure-robustness plays a essential role to develop the arbitrary high order numerical scheme in the velocity simulation
\begin{figure}[H]
\centering
\begin{tabular}{cc}
\includegraphics[width=0.47\textwidth]{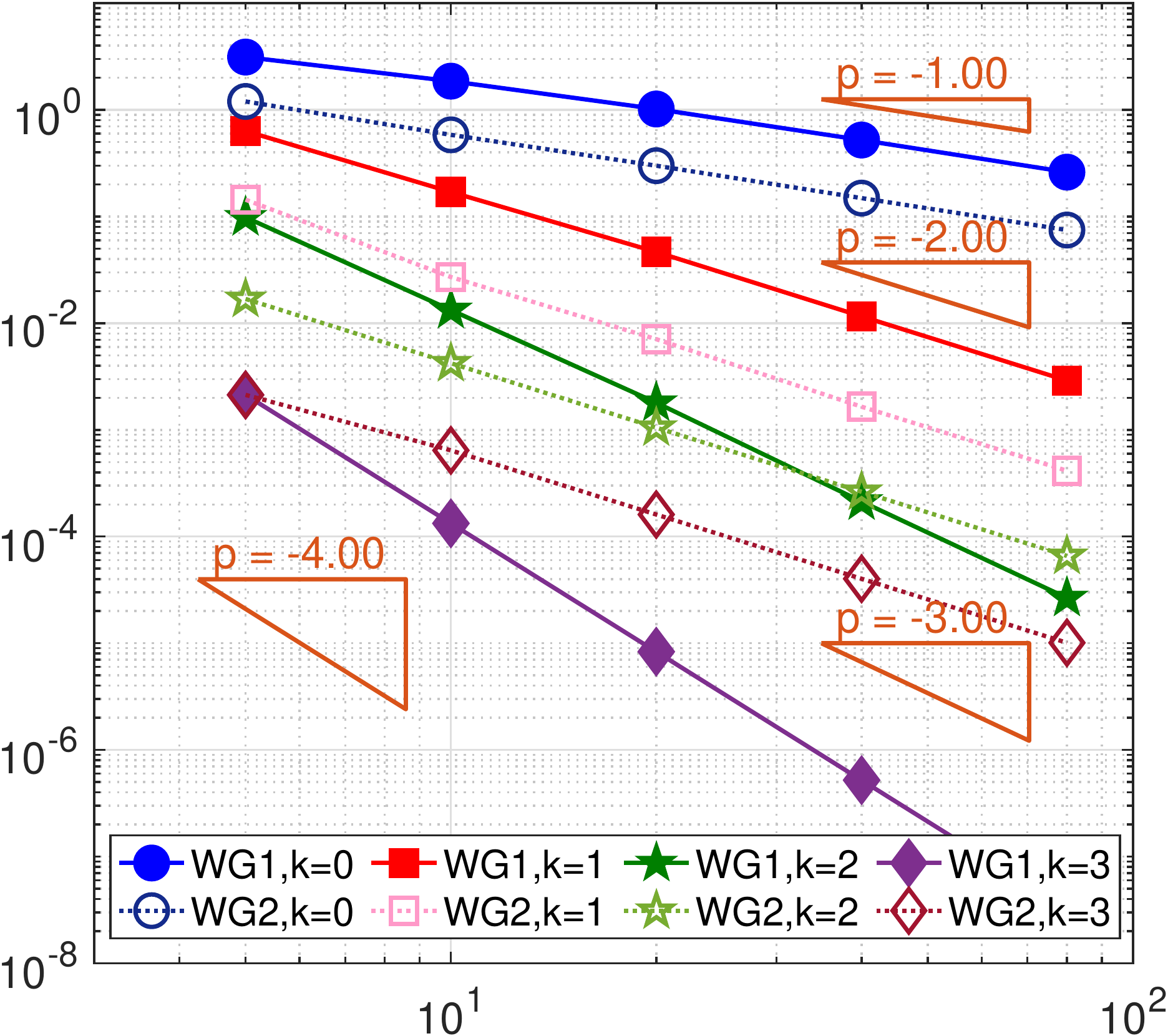}
&\includegraphics[width=0.47\textwidth]{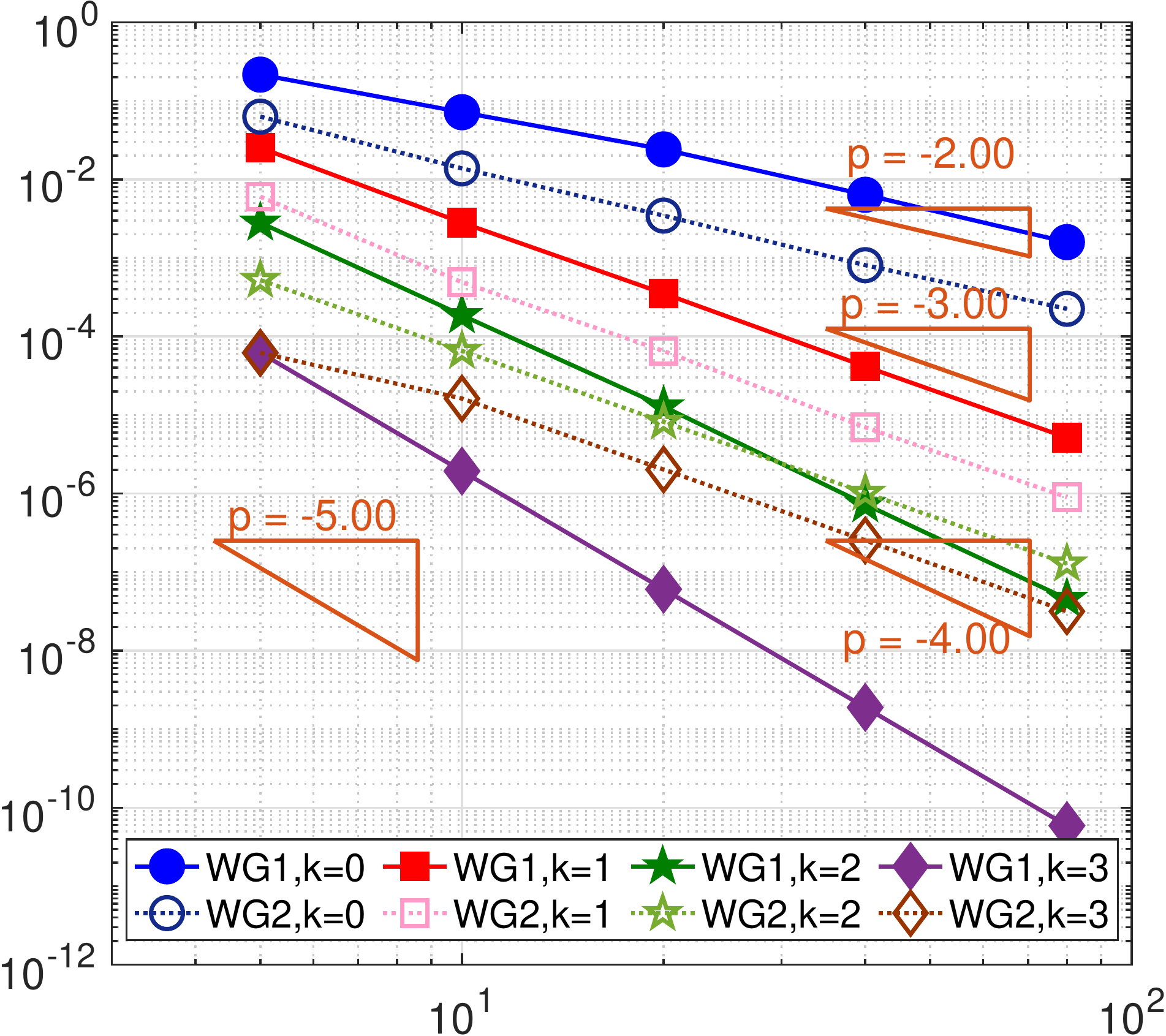}
\\
(a) & (b) 
\end{tabular}
\caption{Test~\ref{Sect:Num-4}. Convergence test for: (a) velocity error in $\3bar\cdot\3bar$-norm; (b) velocity error in $L^2$-norm.}\label{Num4-uErr}
\end{figure}

\section{Conclusion Remark}\label{Sect:Conclusion}
In this paper, we have developed a new stabilizer free and pressure-robust weak Galerkin scheme for solving Stokes equation on polygonal mesh. The new method modifies the right hand side assembling by projecting the test function into the H(div)-conforming space $\Lambda_k(T)$. This new method shows features of divergence preserving and robustness on viscosity variable $\nu$. Besides, the convergence test shows that one order superconvergence is obtained than the optimal order.
 the numerical performance can be significantly improved by using such techniques. 
Finally, the design of the reconstruction operator can be transferred into other polygonal finite element methods including HHO, HDG, and VEM methods.

The results in this paper can be applied to solve other  incompressible flow, including Navier-Stokes equations and Brinkman equations,  when the mass conservation is desired in the discretization. We shall include such applications in the future work plan.

\end{document}